\def\blue{\textcolor{\blue}}
\renewcommand{\leq}{\leqslant}
\renewcommand{\le}{\leqslant}
\renewcommand{\ge}{\geqslant}
\newcommand{\R}{ { \mathbb{R} } }
\newcommand{\C}{ { \mathbb{C} } }
\newcommand{\N}{ { \mathbb{N} } }
\newcommand{\Z}{ { \mathbb{Z} } }
\newcommand{\Ldiv}{ L_{\rm div} }
\newcommand{\eps}{ \varepsilon }
\newcommand{\ep}{ \varepsilon }
\def\curl{\operatorname{curl}}
\def\Div{\operatorname{div}}
\newcommand{\dt}{ \partial_t }
\newcommand{\tendlorsque}[2]{\mathop{\longrightarrow}\limits_{#1\rightarrow#2}}
\newtheorem{Theorem}{Theorem}
\newtheorem{Definition}[Theorem]{Definition}
\newtheorem{Proposition}[Theorem]{Proposition}
\newtheorem{Lemma}[Theorem]{Lemma}
\newtheorem{Remark}{Remark}
\begin{document}

\title{ Cauchy problem and quasi-stationary limit for 
the Maxwell-Landau-Lifschitz and Maxwell-Bloch equations}
\author{Eric Dumas\footnote{Universit\'e Grenoble 1 - 
Institut Fourier - 100, rue des math\'ematiques - 
BP 74 - 38402 Saint Martin d'H\`eres FRANCE }, 
Franck Sueur\footnote{Laboratoire Jacques-Louis Lions -
Universit\'e Pierre et Marie Curie - Paris 6 -
75252 Paris  FRANCE }}
\maketitle

\begin{abstract}
In this paper we continue the investigation of  
the Maxwell-Landau-Lifschitz and Maxwell-Bloch equations. 
In particular we extend some previous results about the Cauchy 
problem and the quasi-stationary limit to the case  where the 
magnetic permeability  and the electric permittivity are variable.
\end{abstract}

\emph{Keywords: } Maxwell equations, Bloch equation, Landau-Lifschitz 
equation, quasi-stationary limit, energy estimates, compensated 
compactness, Strichartz estimates.

\emph{MSC: } 35L45, 35Q60.

\tableofcontents


\section{Introduction}

\paragraph{The models.} 
This paper deals with two physical models which describe the 
propagation of electromagnetic waves, that is of the magnetic 
field $H$ and of the electric field $E$, in some special medium 
which occupies an open subset $\Omega$ of $\R^3$, with magnetic 
permeability  $\mu$ and electric permittivity $\ep$. 
In both cases we denote by $\overline{f}$ the extension of a 
function $f$ by $0$ outside the set $\Omega$. The time variable 
is $t\ge0$, and the space variable is $x\in\R^3$. 

The first model refers to {\bf Maxwell-Landau-Lifschitz equations} 
(see \cite{Brown63} and \cite{LL69} for Physics references). 
The magnetic field $H$ and the electric field $E$  satisfy the 
Maxwell  equations in  $\R^3$:
\begin{equation} \label{maxwellLL} 
\left\{
\begin{split}
& \mu \dt H + \curl E = -  \mu \dt \overline{M} , \\
& \ep \dt E - \curl H =0 , \\
&\Div   \mu ( H +  \overline{M} ) =0 , \\ 
&\Div  \ep E=0 ,
\end{split} \right.
\end{equation}
where $M$ stands for the magnetic moment in the ferromagnet  
$\Omega$ and takes  values in the unit sphere of $\R^3$. 
It is solution to the Landau-Lifschitz equation: 
\begin{equation}
\label{LL}
\dt {M} =  \gamma M \wedge H_T - \alpha M \wedge ( M \wedge H_T)  
\quad \text{ for }  x \in  \Omega,
\end{equation}
where $\gamma\neq0$ is the gyromagnetic constant, and $\alpha>0$ is 
some damping coefficient. Neglecting the exchange phenomenon, 
the total magnetic field $H_T$ is the sum 
\begin{equation}
\label{eq:Htot}
H_T = H + H_a(\overline M) + H_{\rm ext},
\end{equation}
where the anisotropy field writes $H_a(\overline M)=
\nabla_M\Phi(\overline M)$, for some convex function $\Phi$, 
and $H_{\rm ext}$ is some applied (exterior) magnetic field. 

The second model refers to  {\bf Maxwell-Bloch equations} (see 
for example \cite{Bohm79}, \cite{Boyd92}, \cite{CTDRG88}, 
\cite{NM92}, \cite{PP69}, \cite{SSL77}). 
In this setting $\Omega$ denotes some quantum medium with 
$N \in \N $ energy levels described by a Hermitian, non-negative, 
$N \times N$  density matrix $\rho$. 
Assuming the usual dipolar approximation, these quantum states change under the action 
of an electric field $E$ by the quantum Liouville-Von Neumann (or Bloch) equation:
\begin{equation}
\label{MB}
i \dt \rho = [\Lambda-E\cdot\Gamma,\rho ] + i Q(\rho).
\end{equation}
The $N \times N$ Hermitian symmetric matrix $\Lambda$, with entries 
in $\C$, represents the (electromagnetic field-) free Hamiltonian of 
the medium. The dipole moment operator $\Gamma$ 
is a $N \times N$ Hermitian matrix, with entries in $\C^3$, 
and depends on the material considered. The (linear) relaxation 
term $Q(\rho)$ takes dissipative effects into account (see 
\cite{BBR01}, \cite{BB06}, \cite{Loudon91}).  
The polarization $P$ of the matter is given by the constitutive law
$P = \mathrm{ Tr} (\Gamma \rho)$ which influences back the electric 
field $E$. Again, the electromagnetic field satisfies the Maxwell  
equations in  $\R^3$:
\begin{equation} \label{maxwellMB} 
\left\{
\begin{split}
& \mu \dt H + \curl E = 0 , \\
& \ep \dt E - \curl H = -\dt \overline{P} ,  \\
& \Div (\ep  E  + \overline{P} ) = 0,  \\
& \Div  \mu H = 0 .
\end{split} \right.
\end{equation}

\paragraph{Cauchy problems.} 
We first address the questions of global existence, 
uniqueness and stability for the Cauchy problem associated 
with these equations. The physically relevant solutions 
have finite energy: they satisfy the usual ($L^2$) energy estimates. 
Mathematically, this regularity leads to weak solutions and 
is usually not enough to ensure the desired uniqueness 
and stability properties (requiring for these hyperbolic 
semilinear systems in space dimension 3, in the general theory, 
$H^s$ Sobolev regularity with $s>3/2$). 

However, in the case of the Maxwell-Landau-Lifschitz system, 
Joly, M\'etivier and Rauch \cite{jmr} noticed 
that specific (algebraic) properties of the nonlinearities, 
as well as (geometric) properties of the differential operator 
involved, allowed to show the existence of global finite energy 
solutions (essentially, using compensated compactness arguments) 
enjoying stability properties. Furthermore, only a small amount 
of regularity ($\curl H$ and $\curl E$ in $L^2$) ensures 
uniqueness. This is achieved using dispersive properties of 
the system; namely, a limit Strichartz estimate controlling the 
$L^2_tL^\infty_x$ norm of (a limited frequency part of) the fields 
$H$ and $E$. These results were obtained for equations posed in 
the whole space ($\Omega=\R^3$) and for constant coefficients 
$\varepsilon$ and $\mu$. 

In practice, the various coefficients of the system may not be 
constant. Typically, the magnetic permeability and electric 
permittivity may depend on the space variable $x$ and have jumps 
across the boundary of the domain $\Omega$. 
 
Adapting the above mentioned compensated compactness argument, 
Jochmann established in \cite{jochmann} the existence and weak 
stability of global finite energy solutions for the 
Maxwell-Landau-Lifschitz system, considering any domain 
$\Omega\subset\R^3$, and variable, possibly discontinuous coefficients 
($\varepsilon, \mu \in L^\infty(\R^3)$). In the (space) 2-dimensional 
case, we refer to the work of Haddar \cite{Had00}.

Concerning the Maxwell-Bloch system, the first author noticed that 
it shares with the Maxwell-Landau-Lifschitz some of its structural 
properties. This author thus showed in \cite{dumas} results on 
existence and uniqueness of global finite energy solutions, similar 
to the ones of Joly, M\'etivier and Rauch, but for some general class 
of systems including the two models above. Again, these results 
where obtained for equations posed in the whole space 
and for constant coefficients $\varepsilon$ and $\mu$. 

Here, we continue this study, again for a general class of systems 
including the Maxwell-Landau-Lifschitz equations and the Maxwell-Bloch 
equations, so as to enlight the similarities and differences between 
these two models. Adapting Jochmann's method, we show the existence 
and stability of global finite energy solutions, for a given domain 
$\Omega\subset\R^3$, and $L^\infty$ coefficients. 
Then, for smooth coefficients, constant out of some compact set, 
we prove a limit Strichartz estimate analogous to the one obtained 
by Joly, M\'etivier and Rauch in the constant coefficient case. 
This allows us to show propagation of regularity and uniqueness 
when initially, $\curl H$ and $\curl E$ belong to $L^2(\R^3)$. 
As a corollary of a result of Saint-Raymond \cite{LSR}, we also 
infer generic uniqueness of the global finite energy solutions. 

\paragraph{Quasi-stationary limits.} 
Next, we turn to the problem of the so-called quasi-stationary limit. 
Physically, this regime appears when the domain $\Omega$ is small 
compared to the wavelength. Mathematically, it amounts to some 
long-time asymptotics (replacing in the equations $\dt$ by $\eta\dt$, 
for some small parameter $\eta$) with weak nonlinearities (also 
scaled so as to have an amplitude of size $\eta$). 

Jochmann showed in \cite{jochmann} the weak convergence of the 
corresponding solutions to the Maxwell-Landau-Lifschitz system 
towards the solutions of some reduced system driven by 
the magnetization, using the weak stability property. 
Starynkevitch extended this result, proving strong and 
global-in-time convergence in the constant coefficient 
case in \cite{starynkevitch}, thanks to local energy estimates 
performed on the explicit fundamental solution of the associated 
wave equation. He also obtained the same result in the case of 
smooth coefficients, constant out of some compact set, in 
\cite{starynkevitchPhD}, thanks to dispersive estimates obtained 
from resolvent estimates on elliptic operators. 

Here, we apply the same methods to our general systems to get 
weak and strong convergence in the quasi-stationary limit. 
For the latter however,  some time integrability assumption 
is needed to conclude, which is satisfied by 
the Maxwell-Landau-Lifschitz system 
(since $\dt M \in L^2((0,\infty)\times\Omega)$), but we do not 
know if the Maxwell-Bloch system enjoys such a property.

\begin{Remark}
Taking exchange energy into account, one should add to the total 
magnetic field in \eqref{eq:Htot} a term $-K\Delta M$. The resulting 
system is then parabolic. We refer to \cite{AB09}, \cite{AS92}, 
\cite{CF98}, \cite{CF01a}, \cite{CF01b} and \cite{Visintin85} for works 
on the (weak or strong) Cauchy problem, and long-time asymptotics.  
\end{Remark}


\section{Main results}

Let us stress that we do not assume that  $\Omega $ is bounded,  
for the moment. 
To deal with both the Maxwell-Landau-Lifschitz system (\ref{maxwellLL})-(\ref{LL}) and the Maxwell-Bloch system 
(\ref{MB})-(\ref{maxwellMB}), we put these two models above into  
a single class of systems consisting in the coupling of the 
Maxwell equations (with the fields $H$ and $E$ as unknowns) 
with some ODE (corresponding to a third unknown variable). 
The resulting sytem is symmetrizable hyperbolic, 
with semilinear nonlinearity, and some structure 
assumptions are made, such as affine dependence of the nonlinearity 
with respect to the electromagnetic field, and \emph{a priori}  
pointwise estimates on the third unknown variable. 
One of the key points in our study is that the electromagnetic 
fields decompose into an ``irrotational'' part, which is directly 
related to this third unknown, and a ``divergence free'' part, 
which solves some wave equation.

\subsection{An abstract setting}
\label{abstract}

On any finite-dimensional vector space $\R^N$, we denote by 
$u \cdot u'$ the usual scalar product between vectors $u$ and $u'$, 
and by $|\cdot|$ the associated norm. For all $r>0$, $B_r$ denotes 
the (closed) ball centered at $0$, with radius $r$.

We consider two scalar functions $\kappa_1 (x)$ and $ \kappa_2  (x)$, which are uniformly positive:
\begin{equation} \label{hyp:kappa}
\text{for } i=1,2, \quad \kappa_i \in L^\infty (\R^3), \quad 
\text{and} \quad \exists c>0, \kappa_i \geqslant c .
\end{equation}
We denote by $H_{\curl}$ the space of functions 
$f$ in $L^2 (\R^3,\R^3)$ with $\curl f $ in  $L^2(\R^3,\R^3)$.
We consider the operator $B$ defined by 
\begin{eqnarray*} 
B (u_1 ,u_2 ) = (\kappa_1^{-1}  \curl u_2 ,-\kappa_2^{-1} \curl u_1 ) 
\quad \text{for}   
\quad u:=(u_1 ,u_2 ) \in D(B) := H_{\curl} \times  H_{\curl} .
\end{eqnarray*}
This is a skew self-adjoint operator on the Hilbert space $L^2 (\R^3 , \R^6)$ endowed with the 
scalar product 
\begin{eqnarray*} 
\langle (u_1 ,u_2) , (u'_1 ,u'_2) \rangle_{\kappa_1 , \kappa_2 } := 
\int_{\R^3} (\kappa_1 u_1 \cdot u'_1 + \kappa_2 u_2 \cdot u'_2 ) dx .
\end{eqnarray*}
We denote by $P (u_1 ,u_2) := \big(P_1 u_1 , P_2 u_2 \big)$ 
the orthogonal projector on $(\ker B)^\perp$ with respect to 
the weighted scalar product above, so that for $i=1,2$,
\begin{equation} \label{eq:ranP,kerP}
\text{ran } P_i 
= \{ u_i  \in L^2 (\R^3 ,  \R^3) \mid \Div(\kappa_i u_i)=0 \}, 
\quad 
\text{ran } (Id - P_i ) 
=  \{ u_i  \in L^2 (\R^3 ,  \R^3) \mid \curl ( u_i)=0 \} .
\end{equation}
We consider a function 
$F : \R^3\times\R^d\times\R^6\rightarrow\R^d$, where $d\in\N$, 
affine in its third variable, and  written 
\begin{eqnarray} 
\label{affine} 
F ( x,v,u) = F_0 (x, v) + F_1 (x, v) u .
\end{eqnarray}

For each $j=0,1$, $F_j$ is measurable with respect to $x$ 
and continuously differentiable with respect to $v$. 
Furthermore, 
\begin{equation} \label{maj1} 
\begin{split}
\text{for } j=0,1, \quad 
& \text{for almost all } x\in\R^3, \, F_j(x,0)=0, \\
\text{and} \quad
& \forall R>0, \text{ for almost all } x\in\R^3, \, \forall v\in B_R, 
\quad |F_j (x,v)| + | \partial_v  F_j (x,v)| \leqslant C_F (R) .
\end{split}
\end{equation}
Finally, we assume that there exists $K \ge 0$ such that:
\begin{eqnarray}
\label{maj2} 
\text{for almost all } x\in\R^3, \forall (v,u) \in \R^d\times\R^6, 
\quad F(x,v,u) \cdot v   \leqslant   K |v|^2 .   
\end{eqnarray}
\begin{Remark} 
The constant $K$ above may sometimes be taken equal to zero. 
In this case, Estimate \eqref{estimweak1} in Theorem \ref{th:weak} 
is improved, since $v$ does not undergo any growth. 
This is the case for the Maxwell-Landau-Lifschitz model, 
as well as for the Maxwell-Bloch model, when only transverse 
relaxation is taken into account ($Q(\rho)=-\gamma\rho_{od}$, 
for some $\gamma\ge0$, and with $\rho_{od}$ the off-diagonal 
part of $\rho$). 
\end{Remark}
We also consider a function
 $l=(l_1,l_2) \in (L^\infty (\R^3 , L(\R^d ,\R^3) ))^2$, 
where $L(\R^d ,\R^3 )$ denotes the space of linear functions from $\R^d$ to $\R^3$.
We introduce the following shorthand notation: for any $x  \in \R^3$, $(\kappa^{-1} \cdot l )(x)$ 
is the mapping from $\R^d$  to $\R^6$, such that
$$
\text{for almost all } x\in\R^3, \, \forall v \in \R^d, \quad 
(\kappa^{-1} \cdot l)(x) v := 
(\kappa_1 (x)^{-1} l_1 (x) v,\kappa_2 (x)^{-1}l_2 (x) v).
$$ 
Then, for any $U := (u,v)$ in 
\begin{equation*} 
{\bf L}^2 :=  L^2  (\R^3 ,  \R^6 ) \times L^2 ( \Omega, \R^d ),
\end{equation*}
the conditions 
\begin{equation*} 
\Div(\kappa_1 u_1 - l_1 \overline{v})=0 , \quad
\Div(\kappa_2 u_2 - l_2 \overline{v})=0 ,
\end{equation*}
may be equivalently written 
\begin{eqnarray} 
\label{divgen1}  
(Id - P) (u -  (\kappa^{-1} \cdot l) \overline{v} ) =0 .
\end{eqnarray}
We look  for $ U  \in C([0,\infty),  \bf{L}^2 )$, with  
\begin{eqnarray} 
\label{nunu}  
v \in L^\infty_{loc}((0,\infty), L^\infty(\Omega,\R^d)), 
\end{eqnarray}
solution to  
\begin{eqnarray}
\label{MBgen} 
( \dt + B ) u &=&  (\kappa^{-1} \cdot l)  F(x,\overline{v},u)
\quad \text{ for } x  \in  \R^3 , \\ 
\label{MBgen2} 
\dt v &=& F(x,v,u) \quad \text{ for } x  \in \Omega ,
\end{eqnarray}
and \eqref{divgen1}. 
Here, the solution is understood in the distributional sense, 
noticing that \eqref{nunu} gives sense to the nonlinear term, 
since the function $F(x,v,u)$ is affine in $u$.
\begin{Remark}
Equations \eqref{divgen1}-\eqref{MBgen}-\eqref{MBgen2} reduce to 
the Maxwell-Landau-Lifschitz system (\ref{maxwellLL})-(\ref{LL}) 
when $u_1 = H$, $u_2 = E$, $v=M$ (with $d=3$), $\kappa_1 = \mu$, 
$\kappa_2 =  \eps$, $l_1 = -\mu$, $l_2 = 0$,  $F(x,v,u) = \gamma 
v \wedge (u_1 + H_a(v) + H_{\rm ext}) - 
\alpha v \wedge ( v \wedge (u_1 + H_a(v) + H_{\rm ext}) )  $
and to the Maxwell Bloch  system (\ref{MB})-(\ref{maxwellMB})  
when $u_1 = H$, $u_2 = E$,  $v= \rho$ (with $d=N^2$), $\kappa_1 =   \mu$, $\kappa_2 =  \eps$, $l_1 = 0$, $l_2 = \mathrm{Tr} (\Gamma\cdot)$, $F(x,v,u) = - i [\Lambda- u_2 \cdot \Gamma, v ] +  Q(v)$. 
The exterior magnetic field above is usually depending on time. 
We did not consider such time-dependent coefficients in our study, 
since it would have made notations more intricate; up to some 
integrability assumptions, this extension is straightforward. 
\end{Remark} 
\begin{Definition}
We call $U=(u,v) \in C([0,\infty),\bf{L}^2)$ a global finite energy 
solution to \eqref{divgen1}-\eqref{MBgen2} if   \eqref{nunu}  holds 
true and $U$ is a solution to \eqref{divgen1}-\eqref{MBgen2} 
in the distributional sense.
\end{Definition}
\begin{Remark}
Equation  \eqref{divgen1} has to be seen as a (linear) 
constraint, which  propagates from $t=0$ for solutions to 
\eqref{MBgen}-\eqref{MBgen2}:
\begin{eqnarray}
 \label{propagat} 
\dt (Id - P) (u -  (\kappa^{-1} \cdot l) \overline{v} ) =0 .
\end{eqnarray} 
Indeed, by definition of the projector $P$, we have $(Id - P)B=0$, 
so that we get \eqref{propagat} when applying $(Id - P)$ 
to \eqref{MBgen}, using  \eqref{MBgen2} (which extends 
to all $x  \in  \R^3$ since $F(x,0,u) \equiv 0$)  and commuting  
the derivative $\dt$ with $(Id - P)$ and $\kappa^{-1} \cdot l$. 
\end{Remark}
We  therefore have to consider initial data $U_{init}$ satisfying 
\eqref{divgen1}, and for such constrained initial data, 
the solutions to  \eqref{MBgen}-\eqref{MBgen2} also satisfy  
\eqref{divgen1} as long as they exist.
We shall write  $U_{init} := (u_{init} ,v_{init} ) $ with 
$u_{init} := (u_{init,1} , u_{init,2})$. 
\begin{Definition}
Let $\Ldiv$ be the set of functions $U := (u,v) \in 
L^2(\R^3,\R^6) \times (L^2(\Omega,\R^d) \cap L^\infty(\Omega,\R^d))$ 
satisfying \eqref{divgen1}.
\end{Definition}

\subsection{Cauchy problems}
\label{Cauchy}

Our first result states the existence of global finite  
energy solutions to \eqref{divgen1}-\eqref{MBgen2}.
\begin{Theorem}
\label{th:weak} 
Assume \eqref{hyp:kappa} and \eqref{affine}-\eqref{maj2}. 
For any $U_{init}  $ in $\Ldiv $, there exists  $U := (u,v) \in C([0,\infty),{\bf L}^2 )$, global finite energy solution to 
\eqref{divgen1}-\eqref{MBgen2} with  $U_{init} $ as initial data. 
Moreover, for all $T>0$, there is 
$C=C(T,F,l,\|v_{init} \|_{L^\infty})$ such that
\begin{enumerate}[(i)]
\item \label{estimweak1} for almost all $x \in \R^3$, for all $t\ge0$, 
$|v(t,x)| \leq |v_{init} (x)| e^{Kt} $ (with $K$ from \eqref{maj2});
\item \label{estimweak2} for all $t \in [0,T]$, 
$\| (u,v) (t)\|_{ {\bf L}^2} \leq C   \| U_{init} \|_{ {\bf L}^2}$;
\item \label{estimweak3} 
$v \in W^{1,\infty}_{loc}((0,\infty),L^2(\Omega,\R^d))$, 
and for almost every $t \in [0,T]$,
$\|\dt v (t)\|_{L^2 (\Omega)} \leq C \|U_{init}\|_{ {\bf L}^2}$.
\end{enumerate}
Finally, if $\mathcal{U}_{init}$ is a bounded set of $\Ldiv$ which 
is compact in $ {\bf L}^2$, then for all $T>0$, the set $\mathcal{U}$ 
of the above solutions with Cauchy data in $\mathcal{U}_{init}$ is 
compact in $C([0,T],{\bf L}^2 )$.
\end{Theorem}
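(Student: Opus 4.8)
The plan is to prove Theorem~\ref{th:weak} by a standard vanishing-viscosity (parabolic regularization) plus compensated-compactness scheme, following Jochmann's adaptation of the Joly--M\'etivier--Rauch argument to variable coefficients. First I would regularize: add $\delta\Delta u$ to \eqref{MBgen} (and possibly $\delta\Delta v$ to \eqref{MBgen2}, or smooth the coefficients and data), obtaining for each $\delta>0$ a semilinear parabolic system whose local well-posedness in $H^s$ follows from the bounds \eqref{maj1} and standard fixed-point arguments; the structural bound \eqref{maj2} together with \eqref{nunu} is what prevents blow-up of $v$, yielding the pointwise estimate $|v^\delta(t,x)|\le |v_{init}(x)|e^{Kt}$ exactly as in \eqref{estimweak1} (this is just Gronwall applied to $\partial_t|v|^2 = 2F(x,v,u)\cdot v \le 2K|v|^2$, using $F(x,v,u)\cdot v \le K|v|^2$), hence global existence of the regularized solutions on $[0,\infty)$.

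Next I would derive the $\delta$-uniform a priori estimates. The natural quantity is the energy $\mathcal{E}^\delta(t) := \tfrac12\langle u^\delta, u^\delta\rangle_{\kappa_1,\kappa_2} + \tfrac12\|v^\delta\|_{L^2(\Omega)}^2$; differentiating and using that $B$ is skew-adjoint for $\langle\cdot,\cdot\rangle_{\kappa_1,\kappa_2}$, the contributions of $B$ cancel, the viscous term has a sign, and the remaining terms are controlled using \eqref{affine}, \eqref{maj1} and the $L^\infty$ bound on $v^\delta$ just obtained, giving a Gronwall inequality that produces \eqref{estimweak2} uniformly in $\delta$. The bound on $\partial_t v^\delta$ in $L^2(\Omega)$ (hence \eqref{estimweak3}) then follows directly from \eqref{MBgen2}, since the right-hand side $F(x,v^\delta,u^\delta)$ is affine in $u^\delta$ with $L^\infty_x(v)$-bounded coefficients and $u^\delta$ is bounded in $L^2$. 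These estimates give, up to extraction, weak-$*$ limits $u^\delta \rightharpoonup u$ in $L^\infty_t L^2_x$, $v^\delta \rightharpoonup v$ in $L^\infty_t L^2_x \cap (\text{weak-}* \text{ in } L^\infty_{t,x})$, with $v$ satisfying \eqref{estimweak1}, and the constraint \eqref{divgen1} passes to the limit since it is linear (using \eqref{propagat} and that it holds at $t=0$).

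The main obstacle — and the heart of the argument — is passing to the limit in the nonlinear term $(\kappa^{-1}\cdot l)F_1(x,\overline{v},u)u$ (and the $F_1$-term in \eqref{MBgen2}): it is the product of two merely weakly convergent sequences, since neither $u^\delta$ nor $v^\delta$ converges strongly in general. Here I would use the div-curl / compensated-compactness structure emphasized in the introduction: the decomposition $u^\delta = P u^\delta + (Id-P)u^\delta$ where, by the constraint \eqref{divgen1}, the irrotational part $(Id-P)u^\delta = (Id-P)(\kappa^{-1}\cdot l)\overline{v^\delta}$ is controlled by $v^\delta$, whose time derivative is bounded (Aubin--Lions gives strong compactness of $v^\delta$, hence of $(Id-P)u^\delta$, in $L^2_{loc}$). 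The solenoidal part $P u^\delta$ satisfies a wave-type equation ($\partial_t P u^\delta + B u^\delta = P(\text{bounded source})$, with $B$ acting only on the solenoidal part), so $\partial_t P u^\delta$ is bounded in $H^{-1}_{loc}$ while $\curl$-information gives transverse regularity; the classical div-curl lemma then shows that the relevant quadratic expressions (which, by the algebraic structure of the Maxwell nonlinearity — $F_1 u$ enters through cross-product / commutator terms orthogonal to the "bad" directions) converge in the distributional sense to the product of the limits. The remaining terms $F_0(x,\overline{v},u)$-type are affine in the strongly convergent pieces or multiplied against them, so they pass to the limit routinely; continuity in time, $U \in C([0,\infty),\mathbf{L}^2)$, comes from the equations once the limit $U$ is known to solve them weakly. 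Finally, the compactness of the solution set $\mathcal{U}$ for Cauchy data in a compact $\mathcal{U}_{init}$ follows by the same argument applied to a sequence of solutions rather than to the regularization: the uniform estimates \eqref{estimweak1}--\eqref{estimweak3} plus the div-curl compactness yield a convergent subsequence whose limit is again a solution with limiting initial data, using strong convergence of the data to identify the initial trace.
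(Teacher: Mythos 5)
Your outline is close in spirit to the paper's, but there is a genuine and fatal gap at the passage to the limit. You claim that, since $\partial_t v^\delta$ is bounded in $L^\infty_t L^2_x(\Omega)$, the Aubin--Lions lemma gives strong compactness of $v^\delta$ in $L^2_{loc}$. This cannot work: equation \eqref{MBgen2} is an ODE in $t$ for each fixed $x\in\Omega$; there is no spatial differential operator acting on $v$, so no bound whatsoever on $\nabla_x v^\delta$ is available. Aubin--Lions requires a chain $X_0 \hookrightarrow X_1 \hookrightarrow X_2$ with the first embedding compact; here you only have $v^\delta$ bounded in $L^\infty_tL^2_x$ and $\partial_t v^\delta$ bounded in $L^\infty_tL^2_x$, i.e.\ $X_0 = X_1 = X_2 = L^2(\Omega)$ — no compact embedding, no compactness. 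What Aubin--Lions would give you (via Ascoli) is equicontinuity of $t\mapsto v^\delta(t)$ in $L^2(\Omega)$, which still only yields weak-$*$ sequential compactness. The strong $L^2$ convergence of $v^\delta$ is precisely the hard point of the whole theorem, and it is \emph{not} free.

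The paper obtains strong convergence of the $v$-component by a completely different and much more delicate route: a weighted $L^2$ energy estimate on $v^n - v^m$ with a weight $\rho(t,x)=\rho_0(x)\exp(-L\int_0^t|u(s,x)|\,ds)$ depending on the limit field $u$, whose expansion produces three error terms $h_{1,m,n}, h_{2,m,n}, h_{3,m,n}$. The crucial one, $h_{1,m,n}$, involves $Pu^n - Pu$ tested against bounded functions of $v^n, v^m$; it is killed by Jochmann's compensated-compactness lemma (Lemma~\ref{compactness} in the paper), which exploits the structure $B K^n = \partial_t C^n$ with $C^n$ bounded in $L^\infty_t L^2_x$ and a time-integrated test. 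This is not the classical div-curl lemma (which plays space divergence against space curl); it is a space-time version adapted to the first-order hyperbolic operator $B$, and crucially it needs the test functions $G^{k,l}$ to be \emph{equicontinuous in time}, a property that follows from the ODE for $v^n$. The second error term $h_{2,m,n}$ involves the irrotational part and requires a commutator lemma (Lemma~\ref{commutator}). Your appeal to "the algebraic structure of the Maxwell nonlinearity — cross-products orthogonal to the bad directions" is not a substitute for this; the paper's class of systems is abstract and does not rely on such orthogonality. After this, Gronwall in $t$ and $n,m\to\infty$ gives Cauchy-in-$L^2_\rho$, then dominated convergence removes the weight.

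A secondary but real issue: if you regularize \eqref{MBgen} by adding $\delta\Delta u$, the identity $(Id-P)B = 0$ used to propagate the constraint \eqref{divgen1} is no longer sufficient, since $(Id-P)\Delta \neq 0$ in general — so you would need an extra argument (e.g.\ a projected viscosity $\delta\, P\Delta P\, u$, or regularizing differently). The paper avoids this entirely by mollifying only the argument of the nonlinearity, setting $F^n = F(x,\overline{v^n}, R^n u^n)$ with $R^n$ a standard mollifier \eqref{formulus}, and solving the resulting system by a Duhamel fixed point; the constraint propagation then follows exactly as for the original system. Likewise, adding $\delta\Delta v$ in $\Omega$ would require boundary data for $v$, an artifact the paper never needs. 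Your energy and pointwise bounds \eqref{estimweak1}--\eqref{estimweak3} are argued correctly (Gronwall from \eqref{maj2}, skew-adjointness of $B$, affine structure of $F$), and the compactness claim at the end is indeed proved by running the same machinery on a sequence of data rather than on the regularization; but the central compensated-compactness step as you wrote it would not go through.
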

%

To establish this first result, we follow the strategy of Jochmann 
in \cite{jochmann}, which is itself an improvement of the method 
by Joly, M\'etivier and Rauch in \cite{jmr}. 
This is the classical regularization method, in which (global-in-time) 
approximate solutions $U^n=(u^n,v^n)$ are built first 
(Section \ref{sec:approx}); the delicate step consists of course 
in passing to the limit $n\rightarrow\infty$ in the regularization 
(Section \ref{sec:compcomp}). Pointwise bounds are available 
for $v^n$, which imply $L^p$ bounds for 
$(Id - P)u^n = (Id - P)(\kappa^{-1}\cdot l)\overline{v^n}$, for 
finite $p$. The main argument relies on compensated compactness, 
applied to $Pu^n$ (Lemma~\ref{compactness}).

As a byproduct of the proof of Theorem \ref{th:weak}, we also have 
the following version of stability, where we assume strong convergence 
only for the $v$ part of the initial data. It will be useful below 
(cf. proof of Theorem~\ref{th:weakquasi}) when considering the 
weak quasi-stationary limit.
\begin{Proposition} 
 \label{weakprinciple}
Let  $(U^n)_{n\in\N}$ be a sequence in  
$ L^\infty_{loc} ((0,\infty ),\Ldiv ) $, bounded in  
$ L^\infty_{loc} ((0,\infty ), {\bf L}^2 )$, 
with  $(v^n)_{n\in\N}$ bounded in  
$W^{1,\infty}_{loc} ((0,\infty ),L^2 (\Omega)) \cap  
L^\infty_{loc} ((0,\infty ),L^\infty (\Omega)) $  
satisfying \eqref{MBgen2}, $v^n |_{t=0} \rightarrow v_{init} $ 
in $L^2 (\Omega)$ and $B u^n = \partial_t D^n$ 
with $(D^n )_n$ bounded in $ L^\infty_{loc} ((0,\infty ),L^2(\R^3)) $. Then, up to a subsequence, 
$v^n $ converges to $v$ in $L^\infty_{loc} ((0,\infty ),L^p(\Omega))$ 
for any $p \geqslant 2$ and in  
$L^\infty_{loc} ((0,\infty ),L^\infty (\Omega)) \text{ weak } *$,  
$u^n$ converges to $u$ in  
$L^\infty_{loc} ((0,\infty ),L^2 (\R^3)) \text{ weak } *$, 
and $U:= (u,v)$ satisfies  \eqref{divgen1}, \eqref{MBgen2}, 
as well as $v |_{t=0} = v_{init} $.
\end{Proposition}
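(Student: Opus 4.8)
The plan is to extract weak/strong limits from the assumed bounds and then pass to the limit in the equations, the only genuinely nonlinear issue being the product $F_1(x,v^n)u^n$ appearing in \eqref{MBgen2}. First I would use the boundedness of $(v^n)_n$ in $W^{1,\infty}_{loc}((0,\infty),L^2(\Omega))\cap L^\infty_{loc}((0,\infty),L^\infty(\Omega))$: the first bound plus an Aubin--Lions--Simon type argument (compactness in time, using that $\partial_t v^n$ is bounded in $L^\infty_{loc}(L^2)$, against the weak-$*$ compactness in $L^\infty_{loc}(L^\infty)$ giving, e.g., compactness into $L^2_{loc}((0,\infty),H^{-s}_{loc})$ for $s>0$) yields, after extracting a subsequence, $v^n\to v$ in $C_{loc}([0,\infty),L^2_{loc}(\Omega))$; interpolating this strong $L^2$ convergence with the uniform $L^\infty$ bound upgrades it to strong convergence in $L^\infty_{loc}((0,\infty),L^p(\Omega))$ for every $p\ge 2$, and $v^n\overset{*}{\rightharpoonup} v$ in $L^\infty_{loc}(L^\infty(\Omega))$. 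In particular $v|_{t=0}=v_{init}$ because $v^n|_{t=0}\to v_{init}$ in $L^2$ and $v^n|_{t=0}\to v|_{t=0}$ in $L^2_{loc}$.

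Next I would handle $u^n$: it is bounded in $L^\infty_{loc}((0,\infty),L^2(\R^3))$, so up to a further subsequence $u^n\overset{*}{\rightharpoonup}u$ there. Passing to the limit in the constraint \eqref{divgen1} is immediate: $(Id-P)$ is a bounded linear operator, $(\kappa^{-1}\cdot l)$ is multiplication by an $L^\infty$ matrix, and $\overline{v^n}\to\overline v$ strongly in $L^2_{loc}$, so $(Id-P)(u^n-(\kappa^{-1}\cdot l)\overline{v^n})=0$ passes to $(Id-P)(u-(\kappa^{-1}\cdot l)\overline v)=0$ against test functions. Likewise $Bu^n=\partial_t D^n$ with $(D^n)_n$ bounded in $L^\infty_{loc}(L^2)$ gives, after extraction, $D^n\overset{*}{\rightharpoonup}D$ and $Bu=\partial_t D$ in the distributional sense; this is not needed for the statement but confirms $u$ inherits the structure. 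Then I pass to the limit in \eqref{MBgen2}: write $F(x,v^n,u^n)=F_0(x,v^n)+F_1(x,v^n)u^n$. The term $F_0(x,v^n)$ converges strongly in $L^2_{loc}$ by continuity of $F_0$ in $v$ together with the Lipschitz-type bound \eqref{maj1} and the strong convergence of $v^n$. For the bilinear term, the key observation is that $v^n\to v$ \emph{strongly} (in every $L^p_{loc}$, $p<\infty$) while $u^n\rightharpoonup u$ only weakly, and $F_1(x,v^n)$ is bounded in $L^\infty_{loc}$ with $F_1(x,v^n)\to F_1(x,v)$ a.e.\ and (by dominated convergence) strongly in $L^p_{loc}$ for all $p<\infty$; hence the product $F_1(x,v^n)u^n\rightharpoonup F_1(x,v)u$ in $L^2_{loc}$ (strong times weak, after localizing so that $F_1(x,v^n)$ lies in a bounded set). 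Therefore $\partial_t v^n\rightharpoonup F(x,v,u)$ while also $\partial_t v^n\rightharpoonup\partial_t v$, giving \eqref{MBgen2} for the limit.

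The main obstacle is the strong compactness of $(v^n)_n$ in $L^2_{loc}$ in space-time: the hypotheses give a time derivative bound in $L^2(\Omega)$ and a uniform $L^\infty$ bound, but no spatial regularity for $v^n$ itself, so one cannot directly apply a standard Aubin--Lions argument (which needs compactness of the embedding of the ``high-regularity'' space). The way around this is the same one used in the proof of Theorem~\ref{th:weak}: one does \emph{not} try to obtain spatial compactness of $v^n$ directly. Instead, one notes that $v^n$ solves the ODE \eqref{MBgen2} with $u^n$ weakly convergent, and one combines (a) the time-equicontinuity of $v^n$ in $L^2(\Omega)$ from the $\partial_t v^n$ bound, (b) the weak-$*$ convergence in $L^\infty_{loc}(L^\infty)$, and (c) the compensated-compactness information already available for $Pu^n$ through Lemma~\ref{compactness} (which is what delivers, in the construction behind Theorem~\ref{th:weak}, strong convergence of the relevant quadratic quantities); alternatively, one simply invokes the conclusion of Theorem~\ref{th:weak}'s proof applied to the present sequence, since the hypotheses here are exactly those produced by the regularization scheme there, with the single weakening that only $v^n|_{t=0}$ (not all of $U^n|_{t=0}$) is assumed to converge strongly --- and that weakening is harmless precisely because $v^n$'s limit is controlled through the ODE and not through its initial data in $u$. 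Once the strong $L^p_{loc}$ convergence of $v^n$ is in hand, the remaining passages to the limit are the routine strong-times-weak arguments described above.
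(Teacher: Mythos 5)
Your opening plan does not work, and you say so yourself halfway through. The Aubin--Lions--Simon argument gives compactness of $(v^n)$ only into negative Sobolev spaces (say $C_{loc}([0,\infty),H^{-s}_{loc})$), and no amount of interpolation with the uniform $L^\infty$ bound upgrades that to strong $L^2$ convergence: a sequence like $\sin(nx)$ on a bounded interval is bounded in $L^\infty$, converges to $0$ in $H^{-s}$, and does not converge in $L^2$. So strong compactness of $v^n$ in $L^2_{loc}$ cannot be obtained ``for free'' from the stated bounds; it is precisely the nontrivial content of the proposition. The second half of your write-up correctly names the right toolbox --- the weighted energy estimate, Lemma~\ref{compactness}, the ODE structure of \eqref{MBgen2} --- but stops at a gesture; ``invoking the conclusion of Theorem~\ref{th:weak}'s proof'' is not a step, since that theorem's proof treats a specific sequence produced by the regularization scheme, and what must be checked here is that the same weighted estimate runs for an \emph{arbitrary} sequence with these hypotheses.

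Concretely, the missing work is this. One takes $\rho$ as in \eqref{defpoids} and performs the estimate \eqref{poids} on $\rho(v^n-v^m)$, with $R^n u^n,R^m u^m$ replaced by $u^n,u^m$ (so the $h_3$ terms vanish and the $R^n$ mollifiers disappear from $h_1,h_2$). The $h_1$ term is controlled by Lemma~\ref{compactness} applied with $K^n=u^n-u$, and it is exactly the hypothesis $Bu^n=\partial_t D^n$ with $(D^n)_n$ bounded in $L^\infty_{loc}(L^2)$ that makes that lemma applicable (it provides $BK^n=\partial_t C^n$ with $(C^n)_n$ bounded). The $h_2$ term is controlled by Lemma~\ref{commutator}, now in the form \eqref{commutator2} (no $R^n$). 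Gronwall then gives strong $C([0,T],L^2(\Omega))$ convergence of $v^n$, and the only strong initial-data input needed is $v^n|_{t=0}\to v_{init}$ --- the strong convergence of $u^n|_{t=0}$ is never used in this estimate, which is exactly why the weaker hypothesis of the proposition suffices. Only after this do your interpolation-to-$L^p$ and strong-times-weak arguments become valid and close the proof. Spelling out this chain --- and in particular explaining \emph{where} each of the two extra hypotheses ($Bu^n=\partial_t D^n$ and $v^n|_{t=0}\to v_{init}$) is used --- is what the proof actually requires.
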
 

Let us now turn our atttention to smoother solutions. 
We need to assume more smoothness on the coefficients  $\eps$ and $\mu$. 
Of course, when considering in some physical situation a domain $\Omega $ with boundaries, 
the coefficients $\eps$ and $\mu$ experiment discontinuity jumps. 
Since we do not know how to tackle this physical case, we assume from now on  that 
\begin{equation}
\label{hyp:OmegaBded}
\Omega \text{ is bounded}, \quad \text{and with } 
\mathcal{K} = \overline{\Omega}, \quad
\kappa_i -1 \in \mathcal{C}^{\infty}_\mathcal{K} (\R^3 ), \, i=1,2. 
\end{equation}
In order to get a uniqueness result, we only need to ensure 
that the ``divergence free'' part $Pu$ of the fields has the 
$H^1$ regularity. To this end, once a finite energy solution is 
given, we make use of the linear system solved by $Pu$, with 
coefficients depending on the rest of the solution. 
As in \cite{jmr}, we proceed in two steps: we begin 
with the propagation of $H^\mu$ regularity, for $\mu\in(0,1)$, 
using Strichartz estimates (Proposition~\ref{StrichartzProppalim}). 
Applying this result with $\mu=1/2$ provides enough integrability 
for the coefficients of the above mentioned linear equation to 
ensure propagation of $H^1$ regularity. This implies that $u$ 
is ``almost'' $L^\infty$, a natural condition to prove 
uniqueness of the solution. Technically, a $L^\infty$ approximation 
of $u$ is built thanks to a limit Strichartz estimate for low 
frequencies (Proposition~\ref{StrichartzProp}). We also need a 
decoupling assumption, which was introduced in \cite{dumas}, and is 
satisfied by the Maxwell-Landau-Lifschitz system as well as by the 
Maxwell-Bloch system. 
 
\begin{Theorem}
\label{unique} 
In addition to the assumptions of Theorem \ref{th:weak}, 
assume \eqref{hyp:OmegaBded}. 
Let $\mu \in ]0,1]$, and $U_{init} \in \Ldiv$ with 
$\curl u_{init,i} \in H^{\mu -1 } (\R^3)$, for $i=1,2$. 
Then, the following holds true:
\begin{enumerate}[(a)]
\item \label{Unique1} Any solution $U$ to 
\eqref{divgen1}-\eqref{MBgen2} with  $U_{init}$ as initial data 
given by Theorem \ref{th:weak} satisfies 
$\curl u_{i} \in C([0,\infty ],H^{\mu -1}(\R^3))$, for $i=1,2$.
\item \label{Unique2} If $\mu = 1$, assuming moreover that
\begin{equation}
\label{decouplage}
\text{there exists $j \in \{1,2\}$ such that $l_{3-j} F  = 0$ 
and such that $F$ depends only on $(x,v,u_j)$},
\end{equation}
there exists only one solution to \eqref{divgen1}-\eqref{MBgen2} 
with  $ U_{init} $ as initial data as in Theorem \ref{th:weak}.
\end{enumerate}
\end{Theorem}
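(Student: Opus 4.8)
The plan is to prove the two parts separately, using the decomposition $u = Pu + (Id-P)u$, where the irrotational part $(Id-P)u = (Id-P)(\kappa^{-1}\cdot l)\overline{v}$ is controlled directly by $v$ (and hence by Theorem~\ref{th:weak}, which already gives $v \in L^\infty$ and $\partial_t v \in L^2$), so that the whole analysis reduces to studying the ``divergence free'' part $w := Pu$. From \eqref{MBgen} and $(Id-P)B = 0$, $PB = BP$ on suitable domains, one sees that $w$ solves a linear Maxwell-type system $(\partial_t + B)w = P\big[(\kappa^{-1}\cdot l)F(x,\overline{v},u)\big] - \partial_t (Id-P)(\kappa^{-1}\cdot l)\overline{v}$, whose right-hand side is, thanks to the affine structure \eqref{affine} and the bounds \eqref{maj1}, of the form (bounded coefficients)$\cdot w$ plus an $L^2_{loc}$-in-time forcing term.

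\textbf{Part \eqref{Unique1} (propagation of $H^{\mu-1}$ regularity of $\curl u$).} Equivalently, $w = Pu \in H^\mu$. I would first reduce $\mu\in]0,1]$ to the endpoint cases by noting that $H^\mu$ regularity of $w$ is equivalent to $H^{\mu-1}$ regularity of $Bw$, and that $Bw = Bu = (\kappa^{-1}\cdot l)F(x,\overline v,u) - \partial_t u$; controlling the fractional regularity of the source is where Strichartz estimates for the variable-coefficient wave/Maxwell operator $\partial_t + B$ (Proposition~\ref{StrichartzProppalim}, invoked here as a black box) enter: they allow one to bootstrap the a priori $L^2_tL^2_x$ bound into an $L^2_tL^\infty_x$-type bound on a frequency-truncated piece of $w$, which feeds back into estimating $F_1(x,v)u \cdot$(lower-order terms) in the right Sobolev space. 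Concretely I would set up an energy/duality estimate for the linear equation satisfied by $\langle D\rangle^{\mu-1}\curl w$ (or a Littlewood--Paley piece $\Delta_q w$), integrate in time, use \eqref{maj1} to bound the commutators and the coefficient $F_1$ contributions by $C\|w(t)\|_{H^\mu}$ plus the Strichartz-controlled terms, and close by Gronwall; continuity in time of $t\mapsto \curl u_i(t) \in H^{\mu-1}$ comes from the equation plus the usual weak-continuity/strong-continuity upgrade for solutions of hyperbolic systems with this much regularity. The compactness of $\mathcal K$ and smoothness of $\kappa_i-1$ in \eqref{hyp:OmegaBded} is exactly what is needed for the variable-coefficient Strichartz estimate and for the pseudodifferential manipulations (commutators of $\langle D\rangle^s$ with $\kappa_i^{-1}$) to be licit.

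\textbf{Part \eqref{Unique2} (uniqueness when $\mu=1$).} Given two finite-energy solutions $U=(u,v)$, $\tilde U=(\tilde u,\tilde v)$ with the same data, both satisfying, by part \eqref{Unique1} applied with $\mu=1$, $\curl u_i, \curl \tilde u_i \in C([0,\infty],L^2)$, hence $Pu, P\tilde u \in C([0,\infty],H^1)$. Set $\delta u = u-\tilde u$, $\delta v = v-\tilde v$. The equation for $\delta v$ is $\partial_t \delta v = F(x,v,u) - F(x,\tilde v,\tilde u)$; using \eqref{affine}, the $\mathcal C^1$-in-$v$ bounds \eqref{maj1}, and the $L^\infty$ bound on $v,\tilde v$, the right-hand side is bounded by $C(|\delta v| + |\delta u|)$ pointwise, so $\|\delta v(t)\|_{L^2}$ is controlled by $\int_0^t (\|\delta v\|_{L^2} + \|\delta u\|_{L^2})$. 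For $\delta u$, the linear Maxwell system gives $(\partial_t+B)\delta u = (\kappa^{-1}\cdot l)\big(F(x,v,u)-F(x,\tilde v,\tilde u)\big)$, and a weighted $L^2$ energy estimate using skew-adjointness of $B$ yields $\|\delta u(t)\|_{L^2}^2 \lesssim \int_0^t \|\delta u\|_{L^2}\,\|F(x,v,u)-F(x,\tilde v,\tilde u)\|_{L^2}$. The subtlety is the term $F_1(x,v)u - F_1(x,\tilde v)\tilde u = F_1(x,v)\delta u + (F_1(x,v)-F_1(x,\tilde v))\tilde u$: the first piece is harmless ($F_1$ bounded), but the second contains $\tilde u$, which is only $L^2$, multiplied by $|\delta v|$ — not obviously in $L^2$. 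This is where the decoupling assumption \eqref{decouplage} is used: it forces $F$ to depend on $u$ only through the component $u_j$, while the component $u_j$ is, up to the irrotational part which is an explicit function of $v$ (and $l_{3-j}F=0$ kills the bad back-coupling), precisely the part whose ``rotational piece'' $Pu_j$ we have just shown lies in $H^1 \hookrightarrow L^6$; combined with an $L^\infty$-approximation of $u$ built from the low-frequency limit Strichartz estimate (Proposition~\ref{StrichartzProp}), the problematic product is estimated in $L^2$. Then Gronwall on the sum $\|\delta u(t)\|_{L^2}^2 + \|\delta v(t)\|_{L^2}^2$ gives $\delta u = 0$, $\delta v = 0$.

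\textbf{Main obstacle.} The genuinely hard point is not the Gronwall bookkeeping but getting the uniqueness estimate to close at the level of finite-energy ($L^2$) solutions: the nonlinearity, although semilinear, is quadratic-type in the sense that it couples $u$ (only $L^2$) with $v$, and the naive difference estimate loses derivatives. Everything hinges on (i) the structural decoupling \eqref{decouplage} to isolate the ``good'' field component, and (ii) the $L^\infty$-type control on that component coming from the limit Strichartz estimate for low frequencies plus the $H^1$ regularity of $Pu$ propagated in part \eqref{Unique1} — in other words, on the fact that the only dangerous term is tamed exactly by the dispersive information one has painstakingly accumulated. Verifying that \eqref{decouplage} indeed suffices to rewrite $F(x,v,u)-F(x,\tilde v,\tilde u)$ as a sum of terms each estimable in $C([0,T],L^2)$ by $C(\|\delta u\|_{L^2}+\|\delta v\|_{L^2})$ is the crux of the argument.
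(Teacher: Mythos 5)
Your high-level picture is correct (project onto $Pu$, Strichartz for the dispersive piece, logarithmic $L^\infty$-approximation of $u_j$ for the Gronwall in the uniqueness step, decoupling to select the good component), but two places in your outline contain genuine gaps.

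First, for Part~\eqref{Unique1} at $\mu = 1$ your strategy does not close. You propose to ``reduce $\mu \in (0,1]$ to the endpoint cases'' by running a Littlewood--Paley / Gronwall estimate on $\langle D\rangle^{\mu-1}\curl w$ with Proposition~\ref{StrichartzProppalim} invoked as a black box. But the Strichartz estimate~\eqref{StrichartzProppalimesti} is precisely what fails at the endpoint: the indices $(r_1,p_1)=(2,\infty)$, $\mu=1$ are explicitly excluded (this is the Klainerman--Machedon / Lindblad counterexample regime, and the paper says so before Proposition~\ref{StrichartzProp}). The paper's actual route is a two-stage bootstrap: first establish $\mu \in (0,1)$ by a contraction argument in ad hoc spaces $Y^\mu(T)$, $Z^\mu(T)$ in which the source $P(Au)$ is controlled via the paraproduct-type bound (Lemma~\ref{paraproduit}, giving a $T^{\mu/2}$ small factor to close in short time); then, crucially, apply this with $\mu = 1/2$ to upgrade $u$ to $L^\infty_{loc}L^3$, which feeds into $\partial_t A \in L^\infty_{loc}L^3$ and $\partial_t g\in L^\infty_{loc}L^2$ (Lemma~\ref{plantageback}). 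With these improved bounds one differentiates the equation for $\mathbf u = Pu$ in time (Lemma~\ref{dtf} and the system~\eqref{selec1dt}--\eqref{selec2dt}) and reads off $\partial_t\mathbf u\in L^\infty_{loc}L^2$, hence $B\mathbf u \in L^\infty_{loc}L^2$, hence $\mathbf u \in L^\infty_{loc}H^1$. A direct Gronwall on a fractional energy at $\mu=1$ with an endpoint Strichartz estimate simply is not available.

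Second, for Part~\eqref{Unique2} you correctly point to the $L^\infty$-approximation of $u_j$ as the crux, but you attribute all of it to the low-frequency Strichartz estimate (plus $H^1\hookrightarrow L^6$), which misses the actual mechanism for the irrotational piece. The dangerous component is $(Id-P_j)u_j = (Id-P_j)(\kappa^{-1}\cdot l)\overline v$, which is never going to be in $L^\infty$. The paper controls it by pointwise truncation at level $C\ln\lambda$ together with the fact (Lemma~\ref{PonLp}, the Judovi\v{c}-type estimate) that $Id-P_j$ is bounded on $L^p$ with operator norm $O(p)$; choosing $p = 2\ln\lambda$ then gives an error $O(1/\lambda)$ in $L^2$ (this is inequality~\eqref{eq:approxLinf1}). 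The Strichartz estimate~\eqref{StrichartzEsti} only handles $S^\lambda P_j u_j$, and even there the decoupling hypothesis~\eqref{decouplage} is needed at a specific technical step: because $\partial_x f$ is not available in $L^\infty_{loc}L^2$, the paper introduces vector potentials $\phi_i$, $\psi_i$ and applies Strichartz to $\dt\phi$; the hypothesis $l_{3-j}F=0$ kills $f_{3-j}$ and hence $\psi_{3-j}$, which is what makes $u_j$ (and only $u_j$) accessible via~\eqref{noncarac} and~\eqref{potentielvecteur}. Finally, the conclusion of the Gronwall loop is not a plain ``let $\lambda\to\infty$'': the factor $\exp(C\,\overline C(T)\ln\lambda)=\lambda^{C\,\overline C(T)}$ competes against the $1/\lambda$ error, so one must first shrink $T$ to make $\overline C(T)<1$ and then iterate on time intervals. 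Without the logarithmic (rather than polynomial) loss in both the truncation and the Strichartz estimate, this balancing would not work, and ``$H^1\hookrightarrow L^6$'' is not enough.
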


Theorem \ref{unique} asserts that the uniqueness property holds 
for initial data  $U_{init} $ in $L_{div}$ with $\curl  u_{init,i}  
\in L^2 (\R^3)$, $i=1,2$, which are dense in  $\Ldiv$ for the 
topology of ${\bf L^2  }$. 
The following theorem says that  the uniqueness property even 
holds generically for the following topologies.  
Let $\tau_s$ and  $\tau_w$ denote respectively the strong and 
weak topologies of $L^2(\R^3,\R^6 )$ and let $\tilde{\tau}_s$ 
denote the strong topology  of $L^2 (\Omega,\R^d)$. 
We consider the product  topology ${\bf \tau}_{ss} $ 
(resp. ${\bf \tau}_{ws} $) on  ${\bf L}^2$ 
obtained from $\tau_s$ (resp. $\tau_w$) and $\tilde{\tau}_s$.
\begin{Theorem}
\label{generic}
Under the assumptions of Theorem \ref{unique} \eqref{Unique2}, 
for any $C_{init} > 0$, there exists a $G_{\delta}$ dense set 
$\widetilde{L_{div}}$  in the set $\{ U_{init} \in \Ldiv 
\mid \| v_{init} \|_{L^\infty(\Omega)}  \leq C_{init}  \}$ 
for the  topology ${\bf \tau}_{ss} $ and ${\bf \tau}_{ws} $, 
such that for any $U_{init} \in \widetilde{L_{div}}$, 
there exists only one solution  to \eqref{divgen1}-\eqref{MBgen2} 
with  $ U_{init} $   as initial data, with the same properties 
as in Theorem~\ref{th:weak}.
\end{Theorem}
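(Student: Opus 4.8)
\textbf{Proof proposal for Theorem \ref{generic}.}

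The plan is to realize the set of ``good'' initial data as the set of continuity points of a suitable multivalued solution map, and then invoke a Baire-category argument in the spirit of Saint-Raymond \cite{LSR}. First I would fix $C_{init}>0$ and work in the complete metric space $X := \{U_{init}\in\Ldiv \mid \|v_{init}\|_{L^\infty(\Omega)}\le C_{init}\}$ equipped with the metric induced by ${\bf \tau}_{ss}$ (or by ${\bf \tau}_{ws}$; note that on the ${\bf L}^2$-bounded set $X$ the weak-strong product topology is metrizable, and $X$ is closed, hence Polish). Next I would define, for each $U_{init}\in X$, the set $\mathcal{S}(U_{init})\subset C([0,T],{\bf L}^2)$ of global finite energy solutions produced by Theorem \ref{th:weak} with that Cauchy datum; by the final compactness assertion of Theorem \ref{th:weak}, $\mathcal{S}$ has nonempty compact values and, again by that assertion applied to convergent sequences of Cauchy data, $\mathcal{S}$ is upper semicontinuous from $X$ into the compact subsets of $C([0,T],{\bf L}^2)$.

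The key step is then the abstract lemma of Saint-Raymond: an upper semicontinuous multimap with compact values from a Baire space into a metric space is single-valued and continuous on a $G_\delta$ dense subset of its domain. I would apply this to $\mathcal{S}$ on $X$, obtaining a $G_\delta$ dense set $\widetilde{L_{div}}\subset X$ on which $\mathcal{S}$ is single-valued. The point is that on $\widetilde{L_{div}}$ the finite-energy solution is \emph{unique}: if there were two distinct global finite energy solutions with the same datum in $\widetilde{L_{div}}$, they would both lie in $\mathcal{S}(U_{init})$ (here one uses that \emph{every} global finite energy solution with given datum arises as in Theorem \ref{th:weak}, i.e.\ the construction captures all of them, which follows from the fact that Theorem \ref{th:weak}'s compactness statement is about the set of all such solutions), contradicting single-valuedness. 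One then also notes that Theorem \ref{unique}\eqref{Unique2} guarantees uniqueness on the dense subset of data with $\curl u_{init,i}\in L^2$, so $\widetilde{L_{div}}$ is genuinely the set where uniqueness holds among a large class, and in particular nonempty and dense. To upgrade from one fixed $T$ to global-in-time uniqueness, I would run the argument for each $T=m\in\N$ and intersect the countably many $G_\delta$ dense sets $\widetilde{L_{div}}^{(m)}$, which remains $G_\delta$ dense by Baire.

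For the ${\bf \tau}_{ws}$ statement I would repeat the same scheme with $X$ metrized by the weak-strong product metric; the only thing to check is that $\mathcal{S}$ is still upper semicontinuous for that topology, which is exactly what the stability part of Theorem \ref{th:weak} (and Proposition \ref{weakprinciple}) provides, since there convergence of Cauchy data is only assumed strongly in $v$ and is compatible with weak convergence in $u$. Finally, the ``same properties as in Theorem \ref{th:weak}'' clause is automatic: the unique solution selected is by construction one of the solutions furnished by that theorem, hence satisfies estimates (i)--(iii) and the compactness property. The main obstacle I anticipate is the bookkeeping needed to make the multimap framework rigorous: verifying that the graph of $\mathcal{S}$ is closed (equivalently, that a limit of solutions with converging data is again a solution with the limit datum, in the relevant topology), that $\mathcal{S}$ has compact values in $C([0,T],{\bf L}^2)$, and that $X$ with the weak-strong metric is indeed a Baire space on which Saint-Raymond's lemma applies — all of which reduce to careful invocations of Theorem \ref{th:weak} and Proposition \ref{weakprinciple}, but must be stated with some care.
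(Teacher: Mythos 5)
Your overall strategy — realizing generic uniqueness through Saint-Raymond's theorem applied to a solution multimap, with (H1) coming from the compactness/stability of Theorem~\ref{th:weak} and Proposition~\ref{weakprinciple} — is indeed the route the paper takes. But the proof as written rests on a false ``abstract lemma.'' You state that ``an upper semicontinuous multimap with compact values from a Baire space into a metric space is single-valued and continuous on a $G_\delta$ dense subset of its domain.'' That is not true: the constant multimap $F(x)=[0,1]$ is upper semicontinuous with compact values and nowhere single-valued. Upper semicontinuity plus compact values gives you (via Fort's theorem) generic lower semicontinuity, hence generic continuity of the multimap, but continuity of a multimap does not force single values.

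The actual content of Saint-Raymond's result, as recorded as Theorem~\ref{LSR} in the paper, requires \emph{both} hypothesis (H\ref{H1}) (the stability/compactness you invoke) \emph{and} hypothesis (H\ref{H2}): the existence of a dense subset $\mathcal{D}\subset\mathcal{E}_{init}$ on which uniqueness already holds. It is the interplay of the two — continuity of the multimap on a residual set, combined with single-valuedness on a dense set that ``pins down'' the value at a continuity point — that yields generic single-valuedness. In your writeup, the density of the uniqueness set (from Theorem~\ref{unique}~\eqref{Unique2} applied to data with $\curl u_{init,i}\in L^2$) appears only as an afterthought, phrased as if it merely confirms that the $G_\delta$ set you already ``constructed'' is nonempty and dense. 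But you had already claimed density from your (false) abstract lemma, so this passage is either circular or vacuous. The fix is straightforward: state Saint-Raymond's theorem with both hypotheses, verify (H\ref{H1}) exactly as you do from Theorem~\ref{th:weak} (for $\tau_{ss}$) and Proposition~\ref{weakprinciple} (for $\tau_{ws}$), and make the verification of (H\ref{H2}) via Theorem~\ref{unique}~\eqref{Unique2} a load-bearing step rather than a remark. Once that is done, the conclusion follows directly, with no need for your multimap-graph and Baire bookkeeping.

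One further small caution: the phrase ``every global finite energy solution with given datum arises as in Theorem~\ref{th:weak}'' is stronger than what the paper supplies or needs. The space $\mathcal{E}$ in the paper's proof is defined as the set of $U$ satisfying the estimates (i)--(iii) of Theorem~\ref{th:weak}, and the uniqueness conclusion of Theorem~\ref{generic} is correspondingly phrased as uniqueness \emph{among} solutions with those properties. Keep uniqueness relative to that class, rather than asserting that the regularization procedure captures all conceivable finite-energy solutions.
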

Let us stress that we cannot expect that the problem 
\eqref{divgen1}-\eqref{MBgen2} admits  smoother solutions 
than the ones given by Theorem \ref{unique} since, by definition,  
$\overline{v}$ is discontinuous across the boundary $\partial\Omega$.
However it follows from the general theory of discontinuous solutions 
of hyperbolic semilinear systems \cite{metivier,RR}
that the problem \eqref{divgen1}-\eqref{MBgen2} admits piecewise 
regular solutions discontinuous accross $\partial \Omega$ (let us also refer to the appendix of \cite{sueur}). 
Yet the general theory only guaranties  local-in-time solutions. 
We do not know if in the particular case of the problem 
\eqref{divgen1}-\eqref{MBgen2}  global-in-time solutions can 
be obtained.


\subsection{Quasi-stationary limits}
\label{Quasi-stationary limit}

As described in the Introduction, the quasi-stationary regime 
consists in the limit $\eta \rightarrow 0^+$ for \eqref{divgen1}, 
\eqref{MBgen}, \eqref{MBgen2}, where $\dt$ is replaced with 
$\eta\dt$, and $F$ is replaced with $\eta F$. Equations 
\eqref{divgen1} and \eqref{MBgen2} are invariant under this 
rescaling, whereas \eqref{MBgen} becomes 
\begin{eqnarray}
 \label{MBgeneta} 
 (\eta \dt + B ) u &=& \eta (\kappa^{-1} \cdot l)  F(x,\overline{v},u),
\quad \text{ for } x  \in  \R^3 .
\end{eqnarray}
For this semi-classical version of  \eqref{MBgen2}, it is still true that the constraint \eqref{divgen1} is propagated from the initial 
data. Formally, in the limit $\eta \rightarrow 0^+$, $v$ still 
satisfies \eqref{MBgen2}, whereas $u$ satifies \eqref{divgen1} and 
$Bu=0$. But for $U=(u,v) \in C([0,\infty),{\bf L}^2)$, these 
last two conditions are equivalent to the fact that for all 
$t\ge0$, $u(t)$ is directly determined by $v(t)$, and more precisely:
\begin{equation} \label{eq:limquasiu}
u = (Id - P) u = (Id - P) (\kappa^{-1} \cdot l) \overline{v}.
\end{equation}
Then, \eqref{MBgen2} becomes 
\begin{equation} \label{eq:limquasiv}
\dt v = F(x,v,(Id - P) (\kappa^{-1} \cdot l)v).
\end{equation}
Using the stability result given by Proposition~\ref{weakprinciple}, 
we have a first result of convergence towards the quasi-stationary 
limit, weakly for $u$ and locally in time for $v$:
\begin{Theorem}
\label{th:weakquasi} 
Assume \eqref{hyp:kappa}-\eqref{maj2}. 
For any $U_{init}  $ in $\Ldiv $, for any $\eta \in (0,1)$, let 
$U^\eta  := (u^\eta ,v^\eta)$ be a global finite energy solution 
to \eqref{divgen1}, \eqref{MBgen2} and \eqref{MBgeneta} 
with  $U_{init} $ as initial data. 
Then, up to a subsequence, $v^\eta $ converges in 
$L^\infty_{loc}((0,\infty),L^p(\Omega))$ for all $p \geqslant 2$ 
and in $L^\infty_{loc}((0,\infty),L^\infty(\Omega))\text{ weak }*$
towards a solution $v$ to \eqref{eq:limquasiv}, with $v_{init}$ 
as initial data; $Pu^\eta$ converges to $0$ in  
$L^\infty_{loc}((0,\infty),L^2(\Omega))\text{ weak }*$, and  
$(Id - P) u^\eta = (Id - P) (\kappa^{-1} \cdot l) \overline{v^\eta}$ 
converges in $L^\infty_{loc}((0,\infty ), L^p(\R^3))$ 
for all $p \geqslant 2$ towards $u$, given by \eqref{eq:limquasiu}.
\end{Theorem}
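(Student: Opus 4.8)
The plan is to deduce Theorem~\ref{th:weakquasi} from the stability principle of Proposition~\ref{weakprinciple}, applied to the rescaled system. First I would collect the uniform-in-$\eta$ bounds. Testing \eqref{MBgen2} against $v^\eta$ and using \eqref{maj2} gives, exactly as in Theorem~\ref{th:weak}\eqref{estimweak1}, the pointwise bound $|v^\eta(t,x)| \le |v_{init}(x)| e^{Kt}$, hence $(v^\eta)_\eta$ is bounded in $L^\infty_{loc}((0,\infty),L^2(\Omega)\cap L^\infty(\Omega))$; combined with \eqref{maj1} and \eqref{MBgen2} this also bounds $(\dt v^\eta)_\eta$ in $L^2(\Omega)$ locally in time, so $(v^\eta)_\eta$ is bounded in $W^{1,\infty}_{loc}((0,\infty),L^2(\Omega))$. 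For the electromagnetic part, one uses the standard energy identity for \eqref{MBgeneta}: since $B$ is skew-adjoint for $\langle\cdot,\cdot\rangle_{\kappa_1,\kappa_2}$, testing \eqref{MBgeneta} against $u^\eta$ gives $\frac{\eta}{2}\frac{d}{dt}\|u^\eta\|^2_{\kappa_1,\kappa_2} = \eta\langle (\kappa^{-1}\cdot l)F(x,\overline{v^\eta},u^\eta),u^\eta\rangle_{\kappa_1,\kappa_2}$, and the factor $\eta$ cancels; using the affine structure \eqref{affine} and the bounds \eqref{maj1} on $F_0,F_1$ together with the $L^\infty$ bound on $v^\eta$, a Gronwall argument yields $\|u^\eta(t)\|_{{\bf L}^2} \le C(T)\|U_{init}\|_{{\bf L}^2}$ uniformly in $\eta$.

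Next I would put the equation in the form required by Proposition~\ref{weakprinciple}. Rewrite \eqref{MBgeneta} as $Bu^\eta = \dt D^\eta$ with $D^\eta := -\eta u^\eta + \eta\int_0^t (\kappa^{-1}\cdot l)F(x,\overline{v^\eta},u^\eta)\,ds + D^\eta_{init}$, where $D^\eta_{init}$ is a time-independent field chosen so that $Bu^\eta|_{t=0}$ matches (indeed $Bu^\eta = \dt(-\eta u^\eta) + \eta(\kappa^{-1}\cdot l)F$, so integrating in time does the job). From the uniform bounds just obtained, $\eta u^\eta$ and $\eta\int_0^t(\kappa^{-1}\cdot l)F\,ds$ are bounded in $L^\infty_{loc}((0,\infty),L^2(\R^3))$ — in fact they tend to $0$ — and $(D^\eta)_\eta$ is bounded in $L^\infty_{loc}((0,\infty),L^2(\R^3))$. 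The hypothesis $v^\eta|_{t=0}\to v_{init}$ in $L^2(\Omega)$ is trivially satisfied since the data is fixed. So Proposition~\ref{weakprinciple} applies: up to a subsequence, $v^\eta\to v$ in $L^\infty_{loc}((0,\infty),L^p(\Omega))$ for all $p\ge 2$ and in $L^\infty_{loc}((0,\infty),L^\infty(\Omega))$ weak-$*$, $u^\eta\to u$ in $L^\infty_{loc}((0,\infty),L^2(\R^3))$ weak-$*$, and the limit $U=(u,v)$ satisfies the constraint \eqref{divgen1} and \eqref{MBgen2} with $v|_{t=0}=v_{init}$.

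It remains to identify the limit more precisely and to upgrade the convergence of the irrotational part. For $Pu^\eta$: applying $P$ to \eqref{MBgeneta} and using $PB = B$ on $\mathrm{ran}\,P$, one gets $\eta\dt(Pu^\eta) + B(Pu^\eta) = \eta P(\kappa^{-1}\cdot l)F$, and since $\|B(Pu^\eta)\|$ controls... rather, more directly: $B(Pu^\eta) = Bu^\eta = \dt D^\eta$ with $D^\eta\to 0$... Actually the cleanest route is to pass to the limit in the weak formulation of \eqref{MBgeneta} tested against a divergence-free (i.e. in $\mathrm{ran}\,P$) test function $\varphi$: the term $\langle\eta\dt u^\eta,\varphi\rangle$ vanishes in the limit, $\langle Bu^\eta,\varphi\rangle = -\langle u^\eta, B\varphi\rangle \to -\langle u,B\varphi\rangle$, and $\langle\eta(\kappa^{-1}\cdot l)F,\varphi\rangle\to 0$, so $\langle u,B\varphi\rangle = 0$ for all such $\varphi$, i.e. $B(Pu)=0$, hence $Pu=0$ since $B$ is injective on $(\ker B)^\perp$. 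Then $u = (Id-P)u$, and combining with the constraint \eqref{divgen1} satisfied by $U$ gives $u = (Id-P)(\kappa^{-1}\cdot l)\overline{v}$, which is \eqref{eq:limquasiu}; substituting into \eqref{MBgen2} yields \eqref{eq:limquasiv}. For the weak convergence $Pu^\eta\rightharpoonup 0$: since $Pu^\eta = u^\eta - (Id-P)(\kappa^{-1}\cdot l)\overline{v^\eta}$ by the constraint \eqref{divgen1} propagated for each $\eta$, and $(Id-P)(\kappa^{-1}\cdot l)\overline{v^\eta}\to(Id-P)(\kappa^{-1}\cdot l)\overline{v}=u$ strongly in $L^p$ (because $v^\eta\to v$ strongly in $L^p$ and $(Id-P)$, $\kappa^{-1}\cdot l$ are bounded operators), while $u^\eta\rightharpoonup u$ weak-$*$, subtracting gives $Pu^\eta\rightharpoonup 0$. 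The strong convergence of $(Id-P)u^\eta = (Id-P)(\kappa^{-1}\cdot l)\overline{v^\eta}$ in $L^p(\R^3)$ for $p\ge 2$ is then immediate from the strong convergence of $v^\eta$.

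The main obstacle, as in the constant-coefficient case, is not any single estimate but ensuring that the compensated-compactness machinery packaged in Proposition~\ref{weakprinciple} genuinely applies — in particular that one can write $Bu^\eta = \dt D^\eta$ with $(D^\eta)_\eta$ bounded in $L^\infty_{loc}L^2$, which hinges on the crucial cancellation of the $\eta$ factor between the rescaled time derivative and the rescaled nonlinearity in \eqref{MBgeneta}. Once that structural point is in place, everything else is soft functional analysis: weak-$*$ compactness for $u^\eta$, strong $L^p$ compactness for $v^\eta$ inherited from Proposition~\ref{weakprinciple}, and the observation that the irrotational part of $u^\eta$ is a continuous (in fact, for fixed compact time interval, uniformly continuous) image of $v^\eta$.
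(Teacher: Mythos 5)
Your proposal is correct and follows essentially the same route as the paper: uniform-in-$\eta$ bounds (with the key observation that the $\eta$ factors cancel in the energy identity for $u^\eta$), rewriting \eqref{MBgeneta} as $Bu^\eta=\dt D^\eta$ so that Proposition~\ref{weakprinciple} applies, then identifying $Bu=0$ in the limit and combining with the constraint \eqref{divgen1} to recover \eqref{eq:limquasiu}--\eqref{eq:limquasiv}. The one place the paper is slightly slicker: using $\dt\overline{v^\eta}=F(x,\overline{v^\eta},u^\eta)$ it writes the antiderivative in closed form, $D^\eta:=-\eta\bigl(u^\eta-(\kappa^{-1}\cdot l)\overline{v^\eta}\bigr)$, which avoids your time-integral and the spurious worry about choosing $D^\eta_{init}$ (a time-independent additive constant is irrelevant to $\dt D^\eta$), and makes the convergence $D^\eta\to 0$ in $L^\infty_{loc}L^2$ manifest.
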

Convergence of the whole sequence $U^\eta $ is ensured as soon as 
the Cauchy problem associated with the limiting equation 
\eqref{eq:limquasiv} has a unique solution. This is given 
by the following proposition, which extends \cite[Theorem $3.1$]
{starynkevitchPhD} by Starynkevitch. 
\begin{Proposition} \label{prop:uniqquasi}
Assume \eqref{hyp:OmegaBded}, 
and let $v_{init} \in L^\infty (\Omega)$. 
Then,  there is a unique $v \in C([0,\infty),L^2(\Omega)) \cap 
L^\infty_{loc}((0,\infty ),L^\infty (\Omega))$ solution  
to \eqref{eq:limquasiv}, with $v_{init} $ as initial data. 
\end{Proposition}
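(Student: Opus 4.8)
The plan is to prove Proposition~\ref{prop:uniqquasi} by a standard fixed-point / Gronwall argument, once one understands the structure of the nonlocal operator $v \mapsto (Id-P)(\kappa^{-1}\cdot l)v$ appearing in \eqref{eq:limquasiv}. First I would observe that under \eqref{hyp:kappa}, the operators $P_i$ are bounded on $L^2(\R^3,\R^3)$, hence $v\mapsto T v := (Id-P)(\kappa^{-1}\cdot l)\overline v$ is a bounded linear operator from $L^2(\Omega,\R^d)$ to $L^2(\R^3,\R^6)$. Thus the right-hand side of \eqref{eq:limquasiv}, namely $G(x,v):=F(x,v,Tv)$, is, by \eqref{affine}, of the form $F_0(x,v)+F_1(x,v)\,Tv$; using \eqref{maj1}, for $v,\tilde v$ ranging in a ball $B_R$ of $L^\infty(\Omega)$, the maps $v\mapsto F_j(x,v)$ are Lipschitz (uniformly in $x$) with constant $C_F(R)$, so $v\mapsto G(\cdot,v)$ is Lipschitz from $L^2\cap B_R^{L^\infty}$ into $L^2(\Omega,\R^d)$, with a constant controlled by $C_F(R)$ and $\|T\|$. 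Crucially, $\|Tv\|_{L^2}$ is controlled by $\|v\|_{L^2}$, which is itself bounded in terms of $\|v\|_{L^\infty}$ since $\Omega$ is bounded (here is where \eqref{hyp:OmegaBded} enters besides providing the smoothness of $\kappa_i$, which is in fact not needed for this $L^2$ argument).

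Next, for existence I would note that Theorem~\ref{th:weakquasi} already produces a solution $v\in C([0,\infty),L^2(\Omega))\cap L^\infty_{loc}((0,\infty),L^\infty(\Omega))$ to \eqref{eq:limquasiv} with $v_{init}$ as initial data (take any $U_{init}\in\Ldiv$ with that $v_{init}$ and pass to the limit); alternatively one runs a direct Picard iteration. Either way the substantive content is \emph{uniqueness}. So let $v,\tilde v$ be two solutions with the same data and fix $T>0$. Estimate \eqref{estimweak1}-type reasoning (or directly testing \eqref{eq:limquasiv} against $v$ and using \eqref{maj2}) gives an a priori bound $\|v(t)\|_{L^\infty(\Omega)}\le \|v_{init}\|_{L^\infty}e^{Kt}$, and similarly for $\tilde v$; call the common bound $R=R(T)$. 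Then $w:=v-\tilde v$ satisfies $\dt w = G(x,v)-G(x,\tilde v)$ in $L^2(\Omega)$, and using the $L^2$-Lipschitz bound above on the ball $B_R^{L^\infty}$,
\begin{equation*}
\tfrac12\tfrac{d}{dt}\|w(t)\|_{L^2(\Omega)}^2 = \int_\Omega (G(x,v)-G(x,\tilde v))\cdot w\,dx \le L(R)\,\|w(t)\|_{L^2(\Omega)}^2 ,
\end{equation*}
with $L(R)$ depending only on $C_F(R)$, $\|l\|_{L^\infty}$, $c$ and $|\Omega|$; since $w(0)=0$, Gronwall's lemma forces $w\equiv0$ on $[0,T]$, and $T$ was arbitrary.

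There is one point requiring slight care: the solutions from Theorem~\ref{th:weakquasi} are only known to be $C([0,\infty),L^2)$ with $v\in W^{1,\infty}_{loc}$ having values in $L^2$ (from the analogue of Theorem~\ref{th:weak}(iii)), so $\dt w$ lives in $L^2(\Omega)$ for a.e.\ $t$ and the function $t\mapsto\|w(t)\|_{L^2}^2$ is locally Lipschitz, hence the differential inequality above is to be read in integrated form $\|w(t)\|_{L^2}^2\le 2L(R)\int_0^t\|w(s)\|_{L^2}^2\,ds$, which is exactly the hypothesis of the integral Gronwall lemma — no extra regularity is needed. The main (mild) obstacle is therefore purely bookkeeping: verifying that the nonlocal term $Tv$ is genuinely $L^2$-Lipschitz in $v$ on $L^\infty$-balls — which boils down to the boundedness of the Leray-type projectors $P_i$ in the weighted $L^2$ norm together with the equivalence of that norm with the standard one (from \eqref{hyp:kappa}) — and that the a priori $L^\infty$ bound propagating from \eqref{maj2} is available for \emph{any} solution in the stated class, not merely for the constructed one. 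Both are straightforward given the hypotheses, and the conclusion follows.
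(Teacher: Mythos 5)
There is a genuine gap, and it is located exactly where you dismiss the problem as ``straightforward bookkeeping.'' Your argument hinges on the claim that $v\mapsto G(\cdot,v):=F(\cdot,v,Tv)$, with $Tv=(Id-P)(\kappa^{-1}\cdot l)\overline v$, is Lipschitz from $L^2(\Omega)\cap B^{L^\infty}_R$ (with the $L^2$ metric) into $L^2(\Omega)$. Decompose
$G(x,v)-G(x,\tilde v)=\bigl(F_0(x,v)-F_0(x,\tilde v)\bigr)+\bigl(F_1(x,v)-F_1(x,\tilde v)\bigr)Tv+F_1(x,\tilde v)\,T(v-\tilde v)$.
The first and third terms are fine (the third by $L^2$-boundedness of $T$). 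But the middle term is controlled pointwise by $C_F(R)\,|v-\tilde v|\,|Tv|$, and $Tv$ is \emph{not} in $L^\infty$: $Id-P$ is a singular integral (Riesz-type) operator, which does not map $L^\infty$ into $L^\infty$. So the bound $\int_\Omega |(F_1(x,v)-F_1(x,\tilde v))Tv\cdot w|\,dx\le C\|w\|_{L^2}^2$ you need for the Gronwall step simply fails; the best one gets naively is a linear term $C\|Tv\|_{L^2}\|w\|_{L^2}$, which closes nothing.

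This is precisely the obstruction the paper is designed to overcome. Its proof does \emph{not} establish a genuine Lipschitz estimate; instead it truncates $(Id-P)(\kappa^{-1}\cdot l)v_1$ at level $M$, using that the tail is $O(e^{-M/C(T)})$ in $L^\infty_t L^2_x$. That exponential smallness is a consequence of Lemma~\ref{lem:approxLinf}, which rests on Lemma~\ref{PonLp}: under \eqref{hyp:OmegaBded} the projectors $P_i$ act on every $L^p$ with norm $\lesssim p$, so $\|Tv\|_{L^p}\lesssim p\,\|v\|_{L^\infty}|\Omega|^{1/p}$ and one can optimize $p\sim\ln\lambda$. The resulting differential inequality
$\dt\|\delta v\|_{L^2}^2\le C_F\bigl((M+1)\|\delta v\|_{L^2}^2+C(T)e^{-M/C(T)}\bigr)$
is of log-Lipschitz (Osgood) type, and one concludes by choosing $T$ small and letting $M\to\infty$, then iterating in time. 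Your assertion that the smoothness of $\kappa_i$ in \eqref{hyp:OmegaBded} ``is in fact not needed'' is the tell-tale sign: that smoothness is exactly what Lemma~\ref{PonLp} requires, and it is irreducibly used in the actual uniqueness argument.
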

We also prove strong and global-in-time convergence for $u$, 
assuming \eqref{hyp:OmegaBded} again, as well as 
integrability in time for $\| \dt v \|_{L^2(\Omega)}^2$ 
and non-trapping for some wave operator: 
\begin{Theorem}
\label{th:quasi} 
Under the assumptions of Theorem \ref{th:weakquasi}, 
assume moreover \eqref{hyp:OmegaBded}, the non-trapping 
hypothesis \eqref{hyp:nontrapping} and  that $\dt v^\eta$ 
is bounded (w.r.t. $\eta$) in $L^2((0,\infty)\times\Omega)$. 
Then, $P u^\eta$ goes to zero in $L^2((0,\infty ),L^2_{loc}(\R^3))$.
\end{Theorem}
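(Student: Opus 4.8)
\textbf{Proof plan for Theorem \ref{th:quasi}.}

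The plan is to derive from \eqref{MBgeneta} a wave-type equation for the ``divergence free'' part $P u^\eta$ with a source term governed by $\dt v^\eta$, and then to exploit the non-trapping hypothesis \eqref{hyp:nontrapping} to gain local smoothing / local energy decay which, combined with the $L^2_t$ integrability of $\dt v^\eta$, forces $P u^\eta \to 0$ in $L^2((0,\infty),L^2_{loc})$. First I would apply $P$ to \eqref{MBgeneta}: since $(Id-P)B = 0$ and $B P = P B = B$, this yields $(\eta \dt + B)(P u^\eta) = \eta P\big((\kappa^{-1}\cdot l) F(x,\overline{v^\eta},u^\eta)\big)$, while applying $(Id-P)$ recovers the constraint $(Id-P)u^\eta = (Id-P)(\kappa^{-1}\cdot l)\overline{v^\eta}$, so that $u^\eta = P u^\eta + (Id-P)(\kappa^{-1}\cdot l)\overline{v^\eta}$. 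Substituting this decomposition, and using that $F$ is affine in $u$ with the bounds \eqref{maj1} together with the pointwise bound on $v^\eta$ from Theorem \ref{th:weak}(i) (so that the coefficients $F_0, F_1$ are bounded in $L^\infty$), the right-hand side is $\eta$ times a term that is bounded in $L^\infty_{loc}(L^2) + $ (something linear in $P u^\eta$). More usefully, I would rewrite things so that the genuine source is $\dt v^\eta$: differentiating the constraint in time gives $(Id-P)\dt u^\eta = (Id-P)(\kappa^{-1}\cdot l)\overline{\dt v^\eta}$, and one checks, as in \cite{starynkevitch}, \cite{starynkevitchPhD}, that $w^\eta := P u^\eta$ solves a rescaled wave equation $(\eta^2 \dt^2 + \mathcal{A}) w^\eta = \eta\, \mathcal{S}^\eta$ for the self-adjoint operator $\mathcal{A}$ on $(\ker B)^\perp$ obtained from $-B^2$ (the vector Helmholtz/Maxwell operator with variable coefficients $\kappa_i$), with source $\mathcal{S}^\eta$ controlled by $\dt v^\eta$ and by $w^\eta$ itself.

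Next I would invoke the non-trapping assumption \eqref{hyp:nontrapping} on this wave operator $\mathcal{A}$: under non-trapping, one has the standard local energy decay / local smoothing estimates for the (rescaled) wave equation — resolvent estimates on $\mathcal{A}$ that are uniform up to the real axis away from low frequencies, and hence a uniform-in-$\eta$ bound of the form $\| \chi\, w^\eta \|_{L^2((0,\infty),L^2)} \lesssim \eta \big( \| w^\eta(0)\|_{(\ker B)^\perp\text{-energy}} + \| \mathcal{S}^\eta \|_{L^2((0,\infty),L^2_{loc})} \big)$ for any compactly supported cutoff $\chi$ (this is exactly the type of estimate Starynkevitch derives in \cite{starynkevitchPhD} from elliptic resolvent estimates, which I may assume through \eqref{hyp:nontrapping}). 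Here the initial datum $w^\eta(0) = P u_{init}$ is fixed, and $\mathcal{S}^\eta$ is bounded in $L^2_t L^2_{loc}$ by the hypothesis that $\dt v^\eta$ is bounded in $L^2((0,\infty)\times\Omega)$ (recall $\Omega$ is bounded by \eqref{hyp:OmegaBded}, so $L^2(\Omega)$-bounds are automatically $L^2_{loc}(\R^3)$-bounds for the extension by zero) together with the energy bound on $w^\eta$ itself, which can be absorbed for $\eta$ small or handled via Duhamel/Gronwall since the $\eta$ prefactor makes the self-coupling small. The factor $\eta$ on the right-hand side then gives $\| \chi\, P u^\eta \|_{L^2((0,\infty),L^2)} = O(\eta) \to 0$, which is the claim.

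The main obstacle I anticipate is \emph{making the wave-equation reduction honest with the low-regularity, variable, merely-$\mathcal{C}^\infty_{\mathcal K}$-perturbed coefficients and the finite-energy (non-smooth) data}: one must be careful that $w^\eta = P u^\eta$ genuinely solves a closed equation — the source $\mathcal{S}^\eta$ involves $\dt v^\eta = F(x,v^\eta,u^\eta)$, which itself depends on $u^\eta = w^\eta + (Id-P)(\kappa^{-1}\cdot l)\overline{v^\eta}$, so there is feedback, and the projector $P$ does not commute nicely with multiplication by the $\kappa_i$-dependent coefficients; controlling the commutators $[P,\kappa^{-1}\cdot l]$ and justifying that they do not destroy the $\eta$-smallness is the delicate point. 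The second obstacle is quoting the right form of the non-trapping local-energy estimate uniformly in the semiclassical parameter $\eta$ and on the half-line $t\in(0,\infty)$ rather than on a finite interval; this is where assumption \eqref{hyp:nontrapping} and the integrability of $\dt v^\eta$ in time (the feature that the Maxwell-Landau-Lifschitz system has, via $\dt M \in L^2((0,\infty)\times\Omega)$, but that is unknown for Maxwell-Bloch) are both essential, and I would handle the passage to the limit by a density argument, first on smooth compactly supported data where the wave parametrix and resolvent bounds apply cleanly, and then extending by the uniform estimates to general finite-energy data in $\Ldiv$.
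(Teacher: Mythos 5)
Your high-level strategy is exactly the paper's: project $u^\eta$ onto the divergence-free part, derive a rescaled wave equation for that projection, and invoke non-trapping local energy decay together with the $L^2((0,\infty)\times\Omega)$ bound on $\dt v^\eta$. But the ``obstacles'' you flag are resolved by a mechanism you do not describe, and as written your plan has one genuine gap plus a couple of inaccuracies.

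The essential missing step is a \emph{vector-potential reduction}. You cannot apply the local-energy estimate directly to $w^\eta=Pu^\eta$ (or to its symmetrized version). If you differentiate \eqref{MBgeneta} in $t$ to produce a second-order equation for $w^\eta$, the source term contains $\eta^2\,\dt^2\bar v$, and $\dt^2 v^\eta$ is not controlled by the hypotheses. Moreover, for finite-energy data the Cauchy pair $\big(w^\eta(0),\ \eta\dt w^\eta(0)\big)$ only lies in $L^2\times H^{-1}$, not in the $\dot H^1\times L^2$ that the $\gamma=0$ case of Starynkevitch's resolvent-based local-energy theorem requires. The paper fixes both problems at once: writing $\widetilde u_i=\kappa_i^{1/2}u_i$, $\pi_i=\kappa_i^{1/2}P_i\kappa_i^{-1/2}$, $R_i=\kappa_{3-i}^{-1/2}\curl\kappa_i^{-1/2}$, it introduces potentials $\phi_i$ solving $\eta^2\dt^2\phi_i - Q_i\phi_i = \eta\,\pi_i\kappa_i^{-1/2}l_i\,\dt\bar v$ with $\phi_i(0)=0$ and $\eta\dt\phi_i(0)=\pi_i\widetilde u_i(0)$, and then checks that $\pi_i\widetilde u_i = \eta\dt\phi_i + (-1)^iR_{3-i}\phi_{3-i}$. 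The non-trapping estimate (with $\gamma=0$, $s=1$) is applied to $\phi_i(\eta\,\cdot,\cdot)$, \emph{not} to $w^\eta$.

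Two further points. First, your concern about the commutators $[P,\kappa^{-1}\cdot l]$ is misplaced: the symmetrization $\widetilde u_i=\kappa_i^{1/2}u_i$ converts the weighted projector $P_i$ into the $L^2(dx)$-orthogonal $\pi_i$, and the only identities used are $R_i\pi_i=R_i$ and $\pi_iR_i^\star=R_i^\star$, which are exact; moreover the wave operator $Q_i$ is \emph{not} simply ``$-B^2$'' restricted to $(\ker B)^\perp$ but $-R_i^\star R_i$ plus a gradient-divergence correction, and it is that correction which makes $Q_i$ elliptic, which is part of the hypotheses of Starynkevitch's theorem. Second, your claimed rate $O(\eta)$ is too sharp: rescaling $t\mapsto\eta t$ costs a factor $\eta^{-1/2}$ in $L^2_t$, so the (fixed, size $O(1)$) initial datum $\pi_i\kappa_i^{1/2}u_{init,i}$ contributes $O(\sqrt\eta)$ while the source contributes $O(\eta)$; the net bound is $O(\sqrt\eta)$, which still vanishes, so the theorem holds, but not for the arithmetic you wrote. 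Finally, you also need the pointwise weight-decay lemma $|\pi_i m(x)|\lesssim\langle x\rangle^{-3}\|m\|_{L^2}$ for compactly supported $m$ (\cite[Lemma~3.11]{starynkevitchPhD}) to put the source in the weighted $L^2$ space that Starynkevitch's estimate demands.
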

\begin{Remark}
In the case of the Maxwell-Landau-Lifschitz system 
\eqref{maxwellLL}-\eqref{LL}, $\dt M$ actually belongs to 
$L^2((0,\infty)\times\Omega)$. Define the energy $\mathcal{E}(t)$ as 
$$
\mathcal{E}(t) = \frac{1}{2} \int_{\R^3} (\eps |E|^2 + \mu |H|^2)\, dx 
+ \int_\Omega \mu 
\left( \Phi(M) + \frac{1}{2} |H_{\rm ext}-M|^2 \right) dx . 
$$
Differentiating formally this expression with respect to time, 
we see that the integral of 
$H \cdot \curl E - E \cdot \curl H = \Div(E \wedge H)$ 
vanishes, as well as $M \cdot \dt M$ (since $|M|$ is constant). 
Using the orthogonality relations of the nonlinearity, we get 
$$
\dt M \cdot H_T = \alpha |M \wedge H_T|^2  
\quad \text{and} \quad
|\dt M|^2 = (\alpha^2+\gamma^2) |M \wedge H_T|^2,
$$
so that estimate $(ii)$ in Theorem~\ref{th:weak} is improved to 
$$
\mathcal{E}(t) + \frac{\alpha}{\alpha^2+\gamma^2} \int_0^t   
\|  \sqrt{\mu} \dt M(t') \|_{L^2(\Omega)}^2 dt' 
- \frac{1}{2} \| \sqrt{\mu} H_{\rm ext}(t) \|_{L^2(\Omega)}^2 
+ \int_0^t \left( \int_\Omega \mu M\cdot\dt H_{\rm ext} dx \right) dt'  = cst,
$$
and the same is true with the quasi-stationary scaling. 
Assuming for example that $H_{\rm ext} \in L^\infty_tL^2_x$ 
and $\dt H_{\rm ext} \in L^1_{t,x}$, we deduce that $\mathcal{E}$ 
is bounded, and $\dt M$ belongs to $L^2((0,\infty)\times\Omega)$. 

In the case of the Maxwell-Bloch system, we do not know if such an 
estimate is available for $\dt\rho$.
\end{Remark}
%

  
\section{Existence of global finite energy solutions: 
proof of Theorem~\ref{th:weak}}
\label{proofweak} 

\subsection{Technical interlude $1$}
\label{prelim} 

\subsubsection{Intersections and sums of Banach spaces}

 We recall  some useful properties of  the intersection and  the sum of Banach spaces.
Consider two Banach spaces $X_1$ and $X_2$ that are subsets of  a Hausdorff topological vector
space  $X$. Then
\begin{eqnarray*}
& X_1 \cap X_2 :=  \{ f  \in X \mid \ f \in X_1 , \ f \in X_2 \} \\
& \ (\text{respectively } X_1 + X_2 :=  
\{ f  \in X \mid  \ E(f)  \neq \emptyset \} , 
 \text{ where } E(f):= 
 \{ (f_1 ,f_2 ) \in X_1  \times  X_2  \mid \  f_1 + f_2 = f \})
\end{eqnarray*}
is a Banach space endowed with the norm
\begin{eqnarray*}
& \| f  \|_{X_1  \cap  X_2 } := \| f \|_{ X_1 } + \| f \|_{ X_2} \\
& \   (\text{respectively } \| f  \|_{X_1 + X_2 } :=  \inf \{ \| f_1 \|_{ X_1 } + \| f_2 \|_{ X_2} \mid (f_1 ,f_2 ) \in E(f) \}).
\end{eqnarray*}
If furthermore $X_1  \cap  X_2$ is a dense subset of both $X_1$ and $  X_2$, then
$ (X_1  \cap  X_2)' = X'_1  +  X'_2$ and  $(X_1  +  X_2 )' =X'_1  \cap  X'_2 $  (cf. Bergh and L{\"o}fstr{\"o}m \cite{BergLof}, Lemma $2.3.1$ and Theorem $2.7.1$).

\subsubsection{Mollifiers}
   
 We shall use the following symmetric operators 
$R^n : L^2 (\R^3) \rightarrow  L^2 (\R^3) $, defined by
\begin{eqnarray} 
 \label{formulus}
(R^n f)(x) :=  \int_{\R^3 } f(y) w^n (x-y) dy  \quad 
\text{ for } x \in \R^3 , 
\end{eqnarray}
where $w^n \in C^\infty_0  (\R^3)$ is a mollifier with $\text{ supp } w^n \subset B(0,1/(1+n))$ 
and $\int_{\R^3 } w^n = 1$. These operators have the following well-known properties: 
there exists $C>0$ such that  for all $f \in  L^2 (\R^3) $, $r > 1$ and  $n \in \N$, 
\begin{eqnarray} 
 \label{opreg1}
\| f - R^n f \|_{L^2 (\R^3)}  \rightarrow 0 , & \ &  \|  R^n f \|_{L^2 (\R^3)}  \leqslant C  \|   f \|_{L^2 (\R^3)} , \\ 
\label{opreg2} 
 \|  R^n f ||_{L^2 (B_r )}  \leqslant C  \|   f \|_{L^2 (B_{ r+1 } )} ,  &\text{ and } &
  \|  R^n f \|_{L^2 (\R^3 \setminus B_r )}  \leqslant C  \|   f \|_{L^2 ( \R^3 \setminus B_{ r-1 } )}.
\end{eqnarray}
 Moreover for all  $n \in \N$, there exists $C_n >0$ such that  for all $f \in  L^2 (\R^3) $,
\begin{eqnarray} 
\label{opreg3}
 \|  R^n f \|_{L^\infty (\R^3  )}  \leqslant C_n  \|   f \|_{L^2 ( \R^3  )}.
\end{eqnarray}

\subsection{Approximate solutions}
\label{sec:approx}

The following lemma claims the existence of global solutions to some regularized problem.
\begin{Lemma} \label{lem:bornesregMBgen}
 For all $n \in \N$,  there exists  $U^n := (u^n,v^n ) \in C([0,\infty),  {\bf L}^2 )$,  with  
\begin{eqnarray}
\label{formula88}
 v^n  \in    C ([0,\infty), L^\infty ( \Omega,\R^d )) \cap C^1 ([0,\infty),  L^2 ( \Omega,\R^d )),
\end{eqnarray}
solution to the regularized problem:
\begin{eqnarray}
 \label{regMBgen1} 
 ( \dt + B ) u^n &=& 
(\kappa^{-1} \cdot l)  F^n \quad \text{ for } x  \in  \R^3 ,
\\ \label{regMBgen2} 
 \dt v^n &=& F^n \quad  \text{ for } x  \in \Omega ,
\end{eqnarray}
where 
\begin{eqnarray}
\label{formula}
F^n (t,x) :=  F(x,\overline{v^n}  (t,x), R^n u^n  (t,x)),
\end{eqnarray}
with  $U_{init} $ as initial data. 
Moreover, for all $n \in \N$, 
\begin{enumerate}[(a)]
\item  \label{borne1}   For almost all $x \in \R^3$, for all $t\ge0$, 
$|v^n (t,x)| \leq |v_{init}(x)| e^{Kt}$ (with $K$ from \eqref{maj2}).
\item  \label{borne2}   For all $T>0$, there is 
$C=C(T,F,l,\kappa,\|v_{init}\|_{L^\infty})$ such that, 
for all $t \in [0,T]$,  
$\|(u^n,v^n )(t)\|_{{\bf L}^2} + \|  \dt v^n (t)\|_{L^2 (\Omega)} 
\leq C \| U_{init} \|_{{\bf L}^2}$.
\end{enumerate}
\end{Lemma}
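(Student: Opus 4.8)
The plan is to construct $U^n$ by a fixed-point / continuation argument on the mild formulation, exploiting that the regularized nonlinearity $F^n$ is, for each $n$, a Lipschitz map of $u^n$ in $L^2$ (because $R^n$ gains $L^\infty$ by \eqref{opreg3}, and $F$ is $\mathcal{C}^1$ in $v$ with the bounds \eqref{maj1}). First I would set up the semigroup $e^{-tB}$ generated by the skew self-adjoint operator $B$ on $L^2(\R^3,\R^6)$ (with the weighted scalar product $\langle\cdot,\cdot\rangle_{\kappa_1,\kappa_2}$), which is unitary by Stone's theorem. A solution of \eqref{regMBgen1}--\eqref{regMBgen2} with data $U_{init}$ is a fixed point of
\begin{equation*}
\Phi(u,v)(t) = \left( e^{-tB}u_{init} + \int_0^t e^{-(t-s)B}(\kappa^{-1}\cdot l)F^n(s)\, ds,\ v_{init} + \int_0^t F^n(s)\, ds \right),
\end{equation*}
in $C([0,T],{\bf L}^2)$ for small $T$. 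The key estimate is that on the ball where $\|v\|_{L^\infty(\Omega)} \le 2\|v_{init}\|_{L^\infty}$, say, the map $u \mapsto F^n(x,\overline v, R^n u)$ is Lipschitz from $L^2$ to $L^2$ with constant $C_F(R)\,C_n$, and $v \mapsto F^n$ is Lipschitz from $L^\infty$ to $L^2$ as well (here using also that $v$ is supported in the bounded set $\Omega$, so $L^\infty(\Omega)\hookrightarrow L^2(\Omega)$); a standard contraction then yields a unique local solution, with the regularity \eqref{formula88} coming from the fact that $v^n$ is an integral of a continuous $L^2(\Omega)$-valued function, and is continuous into $L^\infty(\Omega)$ since $F^n$ is continuous into $L^\infty(\Omega)$ by the same mollifier bound.

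Next I would upgrade the local solution to a global one via the a priori bounds (a) and (b), which are the heart of the matter. For (a): along the flow, $\dt v^n = F^n = F(x, \overline{v^n}, R^n u^n)$, so for a.e. fixed $x\in\Omega$, $\frac12 \dt |v^n|^2 = F(x,v^n,R^n u^n)\cdot v^n \le K|v^n|^2$ by \eqref{maj2}; Gronwall in $t$ gives $|v^n(t,x)| \le |v_{init}(x)|e^{Kt}$. (One must be a little careful that $v^n$ is only known a priori to be $C^1$ into $L^2(\Omega)$, not pointwise differentiable; but since $F^n$ is continuous into $L^\infty$, $v^n$ is in fact $\mathcal{C}^1$ into $L^\infty(\Omega)$, or one argues the differential inequality in integrated form.) For (b): the $L^2(\R^3,\R^6)$ bound on $u^n$ comes from the energy identity obtained by pairing \eqref{regMBgen1} with $u^n$ in $\langle\cdot,\cdot\rangle_{\kappa_1,\kappa_2}$, using $\langle Bu^n,u^n\rangle = 0$ (skew-adjointness), so that $\frac12 \dt \|u^n\|_{\kappa}^2 = \langle (\kappa^{-1}\cdot l)F^n, u^n\rangle_\kappa = \int (l_1 F^n\cdot u_1^n + l_2 F^n\cdot u_2^n)\,dx$; since $|F^n| \le C_F(R_T)$ with $R_T := \|v_{init}\|_{L^\infty}e^{KT}$ on $[0,T]$ by part (a), and $F^n$ is supported in $\Omega$ (bounded), this is $\le C\|u^n(t)\|_{L^2}$, whence $\|u^n(t)\|_{L^2} \le \|u_{init}\|_{L^2} + Ct$ by Gronwall. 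The bound on $\dt v^n = F^n$ in $L^2(\Omega)$ is then immediate: $\|F^n(t)\|_{L^2(\Omega)} \le C_F(R_T)|\Omega|^{1/2}$, or one keeps the affine structure $F = F_0 + F_1 R^n u^n$ and bounds $\|F_1(x,v^n)R^n u^n\|_{L^2(\Omega)} \le C_F(R_T)\|R^n u^n\|_{L^2(\R^3)} \le C\|u^n\|_{L^2}$, which is already controlled and has the advertised form $C\|U_{init}\|_{{\bf L}^2}$ (this is what makes the right-hand side of (b) proportional to $\|U_{init}\|_{{\bf L}^2}$ rather than merely bounded). Since these bounds preclude blow-up of $\|(u^n,v^n)\|_{{\bf L}^2}$ and of $\|v^n\|_{L^\infty(\Omega)}$ in finite time, the standard continuation principle extends the solution to $[0,\infty)$.

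The main obstacle, I expect, is bookkeeping the two different norms in which $v^n$ lives: the contraction is naturally run in $C([0,T],{\bf L}^2)$, but the Lipschitz bound for $F^n$ and the closing of the pointwise estimate (a) require control of $\|v^n\|_{L^\infty(\Omega)}$, so one has to carry the $L^\infty(\Omega)$-norm along as an auxiliary quantity in the fixed-point space — e.g. work in $\{(u,v) : \|u\|_{C([0,T],L^2)} \le \|u_{init}\|_{L^2}+1,\ \|v\|_{C([0,T],L^\infty(\Omega))} \le \|v_{init}\|_{L^\infty}+1\}$ for $T$ small, and verify $\Phi$ maps this set to itself and contracts in the ${\bf L}^2$ metric. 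The interplay is benign because $\Omega$ is bounded so $L^\infty(\Omega) \hookrightarrow L^2(\Omega)$, but it must be tracked carefully; everything else (Stone's theorem, Duhamel, Gronwall, the mollifier estimates \eqref{opreg1}--\eqref{opreg3}) is routine. I would also note in passing that the constant $C$ in (b) indeed depends only on $T,F,l,\kappa,\|v_{init}\|_{L^\infty}$ and not on $n$, precisely because the $L^\infty$ bound on $F^n$ and the estimate $\|R^n u^n\|_{L^2} \le C\|u^n\|_{L^2}$ are $n$-uniform.
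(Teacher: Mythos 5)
Your overall architecture is the same as the paper's: Duhamel fixed point for $\mathcal{A}^n$ in $C([0,T],\mathbf{L}^2)$ using the $n$-dependent mollifier estimate \eqref{opreg3}, then global continuation via the two a priori bounds, (a) by Gronwall pointwise from \eqref{maj2}, (b) by a Gronwall from the energy estimate. Your remark that the $L^\infty(\Omega)$ bound on $v$ must be carried as an auxiliary constraint in the fixed-point set (since $F$ is only locally Lipschitz in $v$ via \eqref{maj1}) is a fair point that the paper glosses over, and your bookkeeping is sound.

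Two specific missteps, however, which your own ``alternative'' route fortunately corrects: first, $\Omega$ is \emph{not} assumed bounded here --- the paper explicitly notes this at the start of Section 2, and \eqref{hyp:OmegaBded} is only invoked from Theorem~\ref{unique} onward --- so $|\Omega|^{1/2}$ and the embedding $L^\infty(\Omega)\hookrightarrow L^2(\Omega)$ are not available. Second, the pointwise claim $|F^n|\le C_F(R_T)$ is false: $F$ is affine in its third argument, so $|F^n| = |F_0(x,\overline{v^n}) + F_1(x,\overline{v^n}) R^n u^n| \le C_F(R_T)(|\overline{v^n}| + |R^n u^n|)$, and $R^n u^n$ has no uniform pointwise bound along the flow. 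Consequently the estimate ``$\|F^n\|_{L^2(\Omega)}\le C_F(R_T)|\Omega|^{1/2}$'' collapses. What does work is exactly your second route, which is also the paper's: $F_j(x,0)=0$ and the derivative bound in \eqref{maj1} give $\|F_0(x,\overline{v^n})\|_{L^2}\le C_F(R_T)\|v^n\|_{L^2(\Omega)}$, and $|F_1|\le C_F(R_T)$ together with the $n$-uniform $L^2\to L^2$ bound \eqref{opreg1} on $R^n$ give $\|F_1(x,\overline{v^n})R^n u^n\|_{L^2}\le C\,C_F(R_T)\|u^n\|_{L^2}$, so $\|F^n(t)\|_{L^2}\le C\,C_F(R_T)\|U^n(t)\|_{\mathbf{L}^2}$ and Gronwall closes with the advertised proportionality to $\|U_{init}\|_{\mathbf{L}^2}$. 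With that route used throughout, the proof is correct and coincides with the paper's.
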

\begin{proof}
The local-in-time solution is constructed \emph{via} a usual fixed point argument 
for the mapping $ \mathcal{A}^n : C ([0,T],{\bf L}^2) \rightarrow C ([0,T],{\bf L}^2)$,
$$
\mathcal{A}^n (u,v) (t,\cdot) = \Big( \exp( -tB) u_{init} + \int_0^t  \exp((t-t')B)  (\kappa^{-1} \cdot l)  
\mathcal{F}^n (t',\cdot) dt' , \, v_{init} + \int_0^t \mathcal{F}^n (t',\cdot) dt' \Big) ,
$$
where 
\begin{eqnarray*}
\mathcal{F}^n (t,\cdot) :=  F(\cdot ,\overline{v}  (t,\cdot), R^n u  (t,\cdot )).
\end{eqnarray*}
For $T>0$ small enough, $\mathcal{A}^n$ is shown to be a contraction mapping 
 thanks to properties 
\eqref{affine}-\eqref{maj2} of $F$, \eqref{opreg3}, and because $B$ is a skew self-adjoint operator 
in the Hilbert space $L^2 (\R^3 ,  \R^6)$ endowed with the scalar product 
$\langle \cdot  , \cdot  \rangle_{\kappa_1 , \kappa_2 } $. 

Global existence is given by the \emph{a priori} bounds (a) and (b). 
The first one follows directly from \eqref{maj2} and Gronwall's lemma. In the same way, taking the 
$L^2$ norm of $(u^n,v^n)(t) = \mathcal{A}^n (u^n,v^n)(t)$, one gets 
$$
\| (u^n,v^n )(t)\|_{ {\bf L}^2} \le \| U_{init} \|_{ {\bf L}^2} 
+ \int_0^t ( 1 + \| \kappa^{-1} \cdot l \|_{L^\infty} ) 
\| F^n(t',\cdot) \|_{L^2} dt' .
$$
One may add to this inequality the one obtained from \eqref{regMBgen2}, 
$$
\| \dt v^n(t,\cdot) \|_{L^2} \le \| F^n(t,\cdot) \|_{L^2} . 
$$
From \eqref{affine}, \eqref{maj1}, we have
$$
\| F^n(t,\cdot) \|_{L^2} \le C_F(\|v_{init}\|_{L^\infty} e^{Kt}) 
\|(u^n,v^n )(t)\|_{ {\bf L}^2} , 
$$
so that Gronwall's lemma concludes.
\end{proof}

\subsection{Passing to the limit $n  \rightarrow \infty$}
\label{sec:compcomp}

Let us stress that Estimate \eqref{opreg3} is not uniform with respect to $n$. However, we have: 

\begin{Proposition} 
\label{prop:CVforte}
For all $T>0$, there is a subsequence of $(U^n)_{n\in\N}$ given by 
Lemma~\ref{lem:bornesregMBgen} that strongly converges in 
$C([0,T],{\bf L}^2)$ to  $U := (u,v) \in C([0,\infty),{\bf L}^2)$,  
global finite energy solution to \eqref{divgen1}-\eqref{MBgen2} with  
$U_{init} $ as initial data, and satisfying  the estimates  
\eqref{estimweak1}, \eqref{estimweak2}, \eqref{estimweak3} 
of Theorem~\ref{th:weak}. 
\end{Proposition}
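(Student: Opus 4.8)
## Proof Plan for Proposition~\ref{prop:CVforte}

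\textbf{Overall strategy.} The plan is to extract from the approximate sequence $(U^n)_n$ a limit that solves the target system. The uniform bounds (a)--(b) of Lemma~\ref{lem:bornesregMBgen} give weak-$*$ compactness for free, but the nonlinear term $(\kappa^{-1}\cdot l)F^n = (\kappa^{-1}\cdot l)(F_0(x,\overline{v^n}) + F_1(x,\overline{v^n})R^nu^n)$ requires strong convergence of $v^n$ and of the relevant part of $u^n$. The key decomposition is $u^n = P u^n + (Id-P)u^n$, where, because \eqref{divgen1} propagates (see \eqref{propagat}), one has $(Id-P)u^n = (Id-P)(\kappa^{-1}\cdot l)\overline{v^n}$; so strong convergence of $(Id-P)u^n$ follows from that of $v^n$, and the genuinely hard part is strong (local) compactness of $Pu^n$, which is handled by compensated compactness (Lemma~\ref{compactness}). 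I would organize the argument in the following steps.

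\textbf{Step 1: extraction of weak limits.} From (b), $(u^n)$ is bounded in $L^\infty_{loc}((0,\infty),L^2(\R^3,\R^6))$ and $(v^n,\dt v^n)$ in $L^\infty_{loc}((0,\infty),L^2(\Omega,\R^d))$; from (a), $(v^n)$ is bounded in $L^\infty_{loc}((0,\infty),L^\infty(\Omega,\R^d))$. Pass to a subsequence so that $u^n \rightharpoonup u$ weak-$*$ in $L^\infty_{loc}L^2$, $v^n \rightharpoonup v$ weak-$*$ (and, by Aubin--Lions using the $\dt v^n$ bound, strongly in $C([0,T],L^2(\Omega))$, hence in $L^\infty_{loc}L^p$ for all $p\ge2$ by interpolation with the $L^\infty$ bound and a.e. along a further subsequence). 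Note $R^n u^n$ has the same weak-$*$ limit $u$ as $u^n$, since $R^n u^n - u^n \to 0$ (by \eqref{opreg1} for fixed test functions, plus the uniform bound).

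\textbf{Step 2: compensated compactness for $Pu^n$.} Apply $P$ to \eqref{regMBgen1}: using $PB = B$ on $\mathrm{ran}\,P$ and $(Id-P)B=0$, one gets that $Pu^n$ solves $(\dt + B)Pu^n = P(\kappa^{-1}\cdot l)F^n$, with right-hand side bounded in $L^\infty_{loc}L^2$ by Steps 1 and the growth bound \eqref{maj1}. This is a symmetric hyperbolic (Maxwell-type) system with divergence-free constraints; the div-curl structure lets me invoke Lemma~\ref{compactness} to conclude that $Pu^n$ converges strongly in $L^2_{loc}((0,\infty)\times\R^3)$, hence (after a further subsequence) a.e. Combined with $(Id-P)u^n = (Id-P)(\kappa^{-1}\cdot l)\overline{v^n} \to (Id-P)(\kappa^{-1}\cdot l)\overline{v}$ strongly (from Step 1 and boundedness of $\kappa^{-1}\cdot l$), we get $u^n \to u$ strongly in $L^2_{loc}$ and a.e.

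\textbf{Step 3: passage to the limit in the equations.} With $v^n \to v$ and $R^nu^n \to u$ a.e. and the bound \eqref{maj1} on $F_0,F_1$ together with the a priori bound $|v^n|\le |v_{init}|e^{Kt}$, dominated convergence gives $F^n \to F(x,\overline v, u)$ in $L^2_{loc}((0,\infty)\times\R^3)$ (and in $L^2((0,T)\times\Omega)$ for the $v$-equation). Then pass to the limit in \eqref{regMBgen1}--\eqref{regMBgen2} in the distributional sense: $U=(u,v)$ solves \eqref{MBgen}--\eqref{MBgen2}. Since \eqref{divgen1} holds for each $U^n$ (propagated from the common initial datum $U_{init}\in\Ldiv$) and is a closed linear condition stable under weak convergence, it holds for $U$. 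The bounds \eqref{estimweak1}--\eqref{estimweak3} pass to the limit from (a)--(b) by lower semicontinuity of norms and a.e. convergence.

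\textbf{Step 4: strong convergence in $C([0,T],{\bf L}^2)$ and continuity in time.} It remains to upgrade to strong convergence in $C([0,T],{\bf L}^2)$ and to $U\in C([0,\infty),{\bf L}^2)$. For $v^n$ this is Aubin--Lions (Step 1). For $u^n$: write the energy identity for the difference $u^n - u^m$ (or for $u^n$ against the limit equation), using skew-adjointness of $B$ for the weighted scalar product $\langle\cdot,\cdot\rangle_{\kappa_1,\kappa_2}$, so that $\tfrac{d}{dt}\|u^n-u^m\|^2 = 2\langle u^n-u^m, (\kappa^{-1}\cdot l)(F^n-F^m)\rangle$; the right-hand side is controlled, via \eqref{maj1} and the Lipschitz-in-$v$ bound, by $\|v^n-v^m\|_{L^2} + \|R^nu^n - R^mu^m\|_{L^2_{loc}}$-type quantities that go to $0$ by Steps 1--2 (the low-regularity part of $u$ being handled through the $(Id-P)$ decomposition, which is $L^p$-strongly convergent), together with a Gronwall argument using the equicontinuity in time coming from the equation. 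This yields a Cauchy sequence in $C([0,T],{\bf L}^2)$; its limit is necessarily $U$, and $U\in C([0,T],{\bf L}^2)$ as a uniform limit of continuous functions, for every $T$, hence $U\in C([0,\infty),{\bf L}^2)$.

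\textbf{Main obstacle.} The crux is Step 2: the right-hand side $P(\kappa^{-1}\cdot l)F^n$ depends on $u^n$ itself (through $F_1$), so the compensated-compactness argument must be set up so that strong compactness of $Pu^n$ is \emph{not} circularly assumed — it is obtained purely from the div-curl structure plus uniform $L^\infty_{loc}L^2$ bounds on the source, which is exactly what Lemma~\ref{compactness} is designed to supply. A secondary subtlety is that Step 4's strong convergence in the full ${\bf L}^2$ norm (not merely $L^2_{loc}$) requires care with the behavior at spatial infinity; here one uses that $(Id-P)u^n$ is tied to $\overline{v^n}$, which is supported in the fixed bounded set $\Omega$, so no mass escapes to infinity in that component, while the energy identity controls the $Pu^n$ component globally.
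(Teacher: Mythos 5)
Your proposal has two fundamental gaps, both in the compactness claims on which the whole argument rests.

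First, in Step 1, you invoke Aubin--Lions to get strong convergence of $v^n$ in $C([0,T],L^2(\Omega))$. This cannot work: Aubin--Lions requires a compact spatial embedding, but here $v^n$ is only bounded in $L^\infty_{\rm loc}(L^2\cap L^\infty)(\Omega)$ and $\dt v^n$ in $L^\infty_{\rm loc}L^2(\Omega)$. The relation \eqref{regMBgen2} is a pointwise ODE in $t$ for each fixed $x\in\Omega$ and produces no spatial regularity whatsoever on $v^n$; nothing prevents $v^n$ from oscillating wildly in $x$ (take $v^n(t,x)=\phi(t)\psi_n(x)$ with $(\psi_n)$ bounded in $L^2\cap L^\infty$ but not precompact — the time-derivative bound is satisfied yet there is no strong limit). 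Strong convergence of $v^n$ is precisely the crux of the proposition and cannot be obtained for free; the paper obtains it by a weighted energy estimate on $v^n-v^m$, with the weight $\rho$ of \eqref{defpoids}, into which both Lemma~\ref{compactness} (compensated compactness) and Lemma~\ref{commutator} (commutator estimate) are fed.

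Second, in Step 2, you claim that Lemma~\ref{compactness} gives strong $L^2_{\rm loc}$ convergence of $Pu^n$. It does not: its conclusion is only that $\int_0^T\int_{B_r}G^p\cdot PK^n\,dx\,dt\to 0$ uniformly over $p$, for an equicontinuous test family $(G^p)$ — a div-curl-type statement about weak continuity of a bilinear quantity, not a strong-compactness statement. Compensated compactness never upgrades weak to strong convergence of the individual sequences. In the paper this lemma is applied with $K^n=u^n-u$ and $G^{k,l}=R^k\big(\rho^2(\overline{v^k}-\overline{v^l})\cdot F_1^k\big)$, and its role is to kill the cross term $h_{1,m,n}$ in the $v^n-v^m$ energy balance, uniformly in $m$. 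Strong convergence of $Pu^n$ is neither used nor available at that stage. Strong $C([0,T],{\bf L}^2)$ convergence of $u^n$ is obtained only afterwards, from the linear energy identity for $u^n-u$ once $v^n\to v$ strongly is already in hand. Since Steps 1 and 2 both fail, your Step 4 (which relies on them) fails as well. The correct logical order is essentially the reverse of yours: first prove strong convergence of $v^n$ by a weighted $L^2$ estimate on $v^n-v^m$, using Lemma~\ref{compactness} and Lemma~\ref{commutator} as tools therein (not to make $Pu^n$ precompact); only then pass to the limit in \eqref{regMBgen1}--\eqref{regMBgen2} and upgrade the convergence of $u^n$ from weak to strong.
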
 

\begin{proof}
First we infer from the bounds \eqref{borne1}-\eqref{borne2} in Lemma~\ref{lem:bornesregMBgen} that there exists a subsequence, 
still denoted $(u^n,v^n)$, such that $ u^n$ (respectively $F^n$) 
tends to $u$ (resp. to $F_\text{lim}$) in $L^\infty((0,T),{\bf L}^2)   
\text{ weak } *$ (resp. $L^\infty((0,T),L^2(\R^3)) \text{ weak }*$) 
and $v^n$ tends to $v$ in  $W^{1,\infty}((0,T),L^2 (\Omega)) 
\text{ weak }*$ and in  $L^\infty ((0,T),L^\infty(\Omega)) 
\text{ weak }*$. This is enough to ensure that $(u,v)$ satisfies  
\eqref{divgen1}. 
Moreover, Fatou's lemma yields that $u$ and $v$ satisfy 
\eqref{estimweak1}-\eqref{estimweak2} of Theorem \ref{th:weak} 
for almost every $t$ in $(0,T)$ .

Since the function $F$ is not linear, these weak limits do not 
suffice to pass to the limit in Equation  \eqref{regMBgen2}. 
The strategy is to carefully study the nonlinear term $F^n$ to 
prove that the solutions $U^n$ of the regularized problems  
\eqref{regMBgen1}-\eqref{regMBgen2} actually converge (strongly) 
in $L^2 $. The key step consists in proving the strong convergence 
of $v^n$. 

It shall be useful several times to keep in mind that, thanks to the growth conditions  \eqref{maj1} on $F$ and to the pointwise bound Lemma~\ref{lem:bornesregMBgen}, \eqref{borne1} of the $v^n$, there holds, for all $n,m \in \N$, for all $(t,x) \in \lbrack 0,T \rbrack  \times \R^3$, 
\begin{eqnarray} \label{avant1}
| F_i^n(t,x) | & \leqslant & C_F (e^{KT} \|v_{init}\|_{L^\infty}), \\  
\label{avant2} 
| F_i^n (t,x) - F_i^m (t,x) | & \leqslant &  
C_F (e^{KT} \|v_{init}\|_{L^\infty}) 
|\overline{v^n}(t,x) - \overline{v^m}(t,x)|,
\end{eqnarray}
where $F_i^n=F_i (x,\overline{v^n})$, $i=0,1$. 

\emph{Strong convergence of $v^n$.} We perform energy estimates on 
\eqref{regMBgen1}-\eqref{regMBgen2}. Since $u$ may 
be unbounded, we introduce a weight function, which precisely depends on  $u$. 
More exactly, we choose a positive function $\rho_0 (x)$ in  $L^\infty (\R^3) \cap L^2 (\R^3)$ and define
\begin{eqnarray} \label{defpoids}
\rho(t,x) := \rho_0 (x) e^{-L \int_0^t  |u (s,x)| ds  } ,
\end{eqnarray}
with $L \ge C_F (e^{KT} \|v_{init}\|_{L^\infty})$. 
First, using \eqref{regMBgen2} we get
\begin{eqnarray*}
 \frac{1}{2} \frac{\rm d}{\rm dt} \left( \|  \rho ( v^n - v^m )  \|^2_{L^2 (\Omega ) } \right) (t) =
   \int_\Omega  \rho^2 ( v^n - v^m ) \cdot (F^n - F^m ) dx 
   - L  \int_\Omega  \rho^2   |u| | v^n - v^m |^2 dx   .
\end{eqnarray*}
Next, decompose $F^n - F^m $ according to \eqref{affine} to get 
\begin{equation}
 \label{poids}
\begin{split} 
 \frac{1}{2} \frac{\rm d}{\rm dt} \left(  \|  \rho ( v^n - v^m )  \|^2_{L^2 (\Omega ) } \right) (t) = 
 & \int_\Omega  \rho^2 ( v^n - v^m ) \cdot (F_0^n  - F_0^m ) dx \\
 & + \int_\Omega  \rho^2 ( v^n - v^m ) \cdot (F_1^n R^n u^n - F_1^m R^m u^m ) dx 
   - L \int_\Omega  \rho^2   |u| | v^n - v^m |^2 dx  .
\end{split}
\end{equation}
The first term in the r.h.s. of  \eqref{poids} can be estimated by $C  \|  \rho ( v^n - v^m )  \|^2_{L^2 (\Omega ) } (t)$ thanks to \eqref{avant2}.
Now, decompose $F_1^n R^n u^n - F_1^m R^m u^m$ into
\begin{eqnarray*}
F_1^n R^n u^n - F_1^m R^m u^m = 
 F_1^n \big( R^n u^n - u  \big) 
-  F_1^m  \big( R^m u^m - u \big)  
+  \big(F_1^n   -  F_1^m  \big) u  . 
\end{eqnarray*}
The terms produced by the third parenthesis are estimated thanks to \eqref{avant2},
and absorbed by the last term in \eqref{poids}, so that
\begin{eqnarray*}
\frac{1}{2} \frac{\rm d}{\rm dt} 
\left( \|  \rho ( v^n - v^m )  \|^2_{L^2 (\Omega ) } \right) (t) 
&\leqslant& C  \|  \rho ( v^n - v^m )  \|^2_{L^2 (\Omega ) } (t)
+ \int_\Omega  \rho^2 ( v^n - v^m ) \cdot F_1^n ( R^n u^n -  u)  dx \\  
& & + \int_\Omega \rho^2 ( v^m - v^n ) \cdot F_1^m (R^m u^m - u) dx .
 \end{eqnarray*}
Then, decompose $R^n u^n$ and $R^m u^m$ according to the orthogonal 
projector $P$ to get 
\begin{eqnarray}
\label{estimenergv}
\frac{1}{2} \frac{\rm d}{\rm dt} 
\left( \|  \rho ( v^n - v^m )  \|^2_{L^2 (\Omega ) } \right) (t) 
\le C  \|  \rho ( v^n - v^m )  \|^2_{L^2 (\Omega ) }(t) 
+  \sum_{j=1}^3 h_{j,m,n} (t) +  h_{j,n,m} (t) ,
\end{eqnarray}
where
\begin{eqnarray*}
h_{1,m,n} (t) 
&:=& \int_\Omega \rho^2 (v^n - v^m) \cdot F_1^n R^n P (u^n-u) dx , \\ 
h_{2,m,n} (t) 
&:=& \int_\Omega \rho^2 (v^n - v^m) \cdot 
F_1^n R^n (Id - P) (u^n-u) dx , \\   
h_{3,m,n} (t) 
&:=&  \int_\Omega  \rho^2 ( v^n - v^m ) \cdot F_1^n (R^n u -  u) dx .
\end{eqnarray*}

The following lemma  deals with the term $ h_{1,m,n} (t)$.
\begin{Lemma} \label{h1n1}
There holds
\begin{equation}
\label{estimh1}
\forall \delta>0, \, \exists N_\delta\in\N, \, \forall n \geqslant N_\delta, \, 
\forall m \in \N, \qquad \Big| \int_0^T h_{1,m,n} (t) dt \Big| \le 2 \delta . 
\end{equation}
 \end{Lemma}
\begin{proof}
First notice that 
$$h_{1,m,n} (t) =   \int_{\R^3}  R^n \Big( \rho^2 ( \overline{v^n} - \overline{v^m} )  
\cdot F_1^n \Big) P ( u^n -  u) dx .$$

We first handle the case where $x$ is outside of a large ball.
Using the Cauchy-Schwarz inequality, the second property of  $R^n$ in \eqref{opreg2}, the uniform bound in $L^\infty ( \lbrack 0,T \rbrack , L^2(\R^3))$ for $v^n$ given   in Lemma~\ref{lem:bornesregMBgen}, \eqref{borne2} and the bound \eqref{avant1} for $F_1^n$, we get that 
\begin{equation}
\int_0^T \int_{\R^3 \setminus B_r} \Big| R^n \Big( \rho^2 ( \overline{v^n} 
- \overline{v^m} )  F_1^n \Big) \cdot P (  u^n -  u) \Big| dx dt 
\le C  \int_0^T \|\rho(t)^2\|_{L^2(\R^3 \setminus 
B_{r-1})} dt .
\end{equation}
By definition of $\rho$ there exists  $r>0$ so that this integral 
is less than $\delta$. 

It remains to tackle the case where $x \in B_r$.
We use the following compactness lemma:
\begin{Lemma}[Jochmann  \cite{jochmann}, Lemma 3.4]
 \label{compactness} 
Let $(G^n )_{n \in \N}$ and $(K^n )_{n \in \N}$ be bounded sequences in 
$ L^\infty (\lbrack 0,T),L^2 (\R^3  ,  \R^6))$, with $K^n$ converging to $0$  in $ L^\infty 
(\lbrack 0,T),L^2 (\R^3  ,  \R^6))  \text{ weak } *$.
Suppose that $(G^n )_{n \in \N}$ is equicontinuous from $[0,T]$ to $L^2 (\R^3  ,  \R^6)$ 
and that $B K^n = \dt C^n$ with  $(C^n )_{n \in \N}$  
bounded in $ L^\infty (\lbrack 0,T),L^2 (\R^3  ,  \R^6))$. Then for all $r >0$,
\begin{eqnarray} 
\sup_{p \in \N} \Big| \int_0^T  \int_{B_r} G^p (t) \cdot P K^n (t) dx dt \Big|  
\tendlorsque{n}{\infty} 0 .
\end{eqnarray}
 \end{Lemma}
Let us denote  $G^{k,l} = 
R^k \Big( \rho^2 (\overline{v^k}-\overline{v^l}) \cdot F_1^k \Big)$ 
and  $K^n = u^n - u$. 
Thanks to \eqref{avant1}, \eqref{opreg1}  and to 
Lemma~\ref{lem:bornesregMBgen}, \eqref{borne2},  
we get that  $(G^{k,l})_{k,l \in \N}$ and $(K^n )_{n \in \N}$
are bounded in $L^\infty ([0,T),L^2 (\R^3))$.
Moreover  $K^n$ tends to zero in $ L^\infty ([0,T),L^2 (\R^3))  \text{ weak } *$, by definition of $u$. 
Let us denote $\mathcal{F}^n = \int_0^t (\kappa^{-1} \cdot l) F^n dt'$ and  $\mathcal{F} = \int_0^t (\kappa^{-1} \cdot l) F_\text{lim}  dt'$.
From  \eqref{regMBgen1} we infer that
$$B K^n = \dt C^n , \quad {\rm with} 
\quad C^n := \mathcal{F}^n - u^n -  ( \mathcal{F} - u ) .$$
  The sequence 
$(C^n )_{n \in \N}$ is bounded in $L^\infty ([0,T),L^2 (\R^3))$. 
In the same way, equicontinuity is obtained from the bounds on $\dt v^n = F^n$. 
We therefore apply the lemma observing that, for all $m,n \in \N$, 
$$\Big| \int_0^T  \int_{B_r} G^{m,n} (t) \cdot P K^n (t) dx dt \Big| \le 
\sup_{k,l \in \N} \Big| \int_0^T  \int_{B_r} G^{k,l} (t) \cdot P K^n (t) dx dt \Big| . $$ 

Lemma~\ref{compactness} therefore ensures that there is 
$N_{r,\delta}\in\N$ such that, for $n \geqslant N_{r,\delta}$ and for all $m \in \N$, 
$$\Big|\int_0^T \int_{B_r} R^n \Big( \rho^2 ( \overline{v^n} 
- \overline{v^m} )  F_1^n \Big) \cdot P (  u^n -  u) dx dt \Big| \le \delta ,$$
and Lemma \ref{h1n1} is proved.
\end{proof}

We now deal  with the term $ h_{2,m,n} (t)$.
\begin{Lemma} \label{h1n2}
There holds
\begin{equation}
\label{estimh2}
\forall \delta>0, \, \exists N_\delta\in\N, \, \forall n \ge N_\delta, \, 
\forall m \in \N, \qquad \Big| \int_0^T h_{2,m,n} (t) dt \Big| 
\le \delta + C \| \rho ( \overline{v^n}-\overline{v^m} ) \|_{L^2_{t,x}} 
\| \rho ( \overline{v^n}-\overline{v} ) \|_{L^2_{t,x}}.
\end{equation}
 \end{Lemma}
\begin{proof}
The $(u^n ,v^n )$ satisfy \eqref{divgen1}  and so does their weak limit $(u ,v )$. Thus
\begin{eqnarray*}
h_{2,m,n} (t) 
&=& - \int_\Omega  \rho^2 ( v^n - v^m ) \cdot F_1^n R^n 
(Id - P) ( \kappa^{-1} \cdot l) ( \overline{v^n}-\overline{v} ) dx  \\
&=& - \int_{\R^3}  \rho^2 ( \overline{v^n} - \overline{v^m} ) \cdot F_1^n R^n 
(Id - P) ( \kappa^{-1} \cdot l) ( \overline{v^n}-\overline{v} ) dx .
\end{eqnarray*}
Then we decompose 
\begin{equation} \label{h2split}
\begin{split}
\int_0^T | h_{2,m,n} (t) | dt \le 
& \int_0^T \Big| \int_{\R^3} ( \overline{v^n} - \overline{v^m} ) \cdot F_1^n 
\Big[ \rho^2 R^n (Id - P) ( \kappa^{-1} \cdot l) ( \overline{v^n}-\overline{v} ) \\
& \qquad \qquad \qquad - \rho R^n (Id - P) \rho 
( \kappa^{-1} \cdot l) ( \overline{v^n}-\overline{v} ) \Big] dx \Big| dt \\
& + \int_0^T \Big| \int_{\R^3} \rho ( \overline{v^n} - \overline{v^m} ) \cdot F_1^n 
R^n (Id - P) \rho ( \kappa^{-1} \cdot l) ( \overline{v^n}-\overline{v} ) dx \Big| dt .
\end{split}
\end{equation}
The second integral in the r.h.s. of \eqref{h2split} is estimated 
thanks to H\"older's inequality:
\begin{equation*}
\int_0^T \Big| \int_{\R^3} \rho ( \overline{v^n} - \overline{v^m} ) \cdot F_1^n 
R^n (Id - P) \rho  ( \kappa^{-1} \cdot l) ( \overline{v^n}-\overline{v} ) dx \Big| dt \le 
C \| \rho ( \overline{v^n}-\overline{v^m} ) \|_{L^2_{t,x}} 
\| \rho ( \overline{v^n}-\overline{v} ) \|_{L^2_{t,x}},
\end{equation*}
where $C$ depends only on $T,F,l,\|v_{init}\|_{L^\infty}$.
To deal with the first integral in the right-hand side of \eqref{h2split}, we use the following commutation lemma.
\begin{Lemma}[Jochmann  \cite{jochmann}, Lemma 3.5]
 \label{commutator} 
 Let $\rho$ belong to $L^2 ((0,T),L^2  (\R^3)) \cap L^{\infty} ((0,T),L^\infty (\R^3))$, 
and let $(M^n )_{n \in \N}$ be a bounded sequence in $W^{1,\infty} ((0,T),L^2  (\R^3)) 
\cap L^{\infty} ((0,T),L^\infty  (\R^3))$ which converges to $0$ in 
$ L^\infty ((0,T),L^2 (\R^3))  \text{ weak } *$. Then
\begin{eqnarray} 
\int_0^T  \|\rho(t)^2 R^n (Id - P) M^n - \rho(t) R^n (Id - P) \rho(t) M^n  
\|_{L^1 (\R^3) + L^2 (\R^3) }  dt   \tendlorsque{n}{\infty} 0 , \label{commutator1} \\  
\mbox{and} \quad \int_0^T  \|\rho(t)^2  (Id - P) M^n - \rho(t)  (Id - P) \rho(t) M^n    
\|_{L^1 (\R^3) + L^2 (\R^3) }  dt  \tendlorsque{n}{\infty} 0 \label{commutator2} .
\end{eqnarray}  
\end{Lemma}
We apply Lemma~\ref{commutator}, \eqref{commutator1} with 
$M^n = ( \kappa^{-1} \cdot l) ( \overline{v^n}-\overline{v} )$: 
the first integral in the right-hand side of \eqref{h2split} is estimated by  
$$C(T,F,\|v_{init}\|_{L^\infty}) 
\int_0^T \| \overline{v^n}-\overline{v^m} \|_{L^\infty \cap L^2} 
\| \rho^2 R^n (Id-P) M^n  \\
- \rho R^n (Id - P) \rho M^n \|_{L^1+L^2} dt ,$$
and thus goes to zero as $n$ goes to infinity, uniformly w.r.t. $m$. 
Hence, we get  \eqref{estimh2}.
\end{proof}

For all $\delta>0$, we also bound $h_{3,m,n}$ by $\delta$ for all $n \ge N_\delta$ 
and all $m \in \N$ thanks to \eqref{opreg1}. Finally, summing up with \eqref{estimh1} 
and \eqref{estimh2}, we have from \eqref{estimenergv}:
\begin{equation}
\label{estimvp-vm}
\begin{split}
& \forall \delta>0, \, \exists N_\delta\in\N, \, \forall n,m \geqslant N_\delta,  \, 
\forall t \in [0,T], \\
& \|  \rho ( v^n - v^m )  \|^2_{L^2 (\Omega ) } (t) 
\le C \left( \delta + \|  \rho ( v^n - v^m )  \|^2_{L^2((0,T)\times\Omega)}  
+ \|  \rho ( v^n - v )  \|^2_{L^2((0,T)\times\Omega)} \right).
\end{split}
\end{equation}  
Use Gronwall's Lemma, then let $m$ go to $\infty$, and use Gronwall's Lemma again 
to deduce:
$$
\forall \delta>0, \, \exists N_\delta\in\N, \, \forall n \geqslant N_\delta,  \, 
\forall t \in [0,T], \qquad 
\|  \rho ( v^n - v )  \|^2_{L^2 (\Omega ) } (t) \le C \delta ,
$$
which implies that $v^n$ converges towards $v$ strongly 
in $L^2((0,T)\times\Omega,\rho(t,x)^2 dt dx)$. 
Up to a subsequence, convergence then holds almost everywhere, 
for the measure $\rho^2 dt dx$, or $dt dx$, since $\rho$ 
is positive almost everywhere in $(0,T)\times\Omega$. 
Thanks to the pointwise estimates \eqref{borne1} from 
Lemma~\ref{lem:bornesregMBgen}, dominated convergence 
thus ensures that $v^n$ converges towards $v$ strongly 
in $L^2((0,T)\times\Omega,dt dx)$. 
Then, equicontinuity of $\{v\} \cup \{v^n\}_{n\in\N}$ 
in $C([0,T],L^2(\Omega))$ implies (by Ascoli's Theorem) 
the strong convergence of $v^n$ in $C([0,T],L^2(\Omega))$. 
This, together with the uniform bounds on 
$\{v\} \cup \{v^n\}_{n\in\N}$ and with the weak 
convergence of $u^n$, is enough to pass to the limit 
in \eqref{regMBgen1}, \eqref{regMBgen2} to get 
\eqref{MBgen}, \eqref{MBgen2}. \\
\\ 
\emph{Strong convergence of $u^n$.} 
Since $u^n$ and $u$ satisfy \eqref{regMBgen1} and \eqref{MBgen} respectively, 
their difference is solution to a hyperbolic equation with source term 
in $L^1((0,T),L^2(\R^3))$,
$$( \dt + B ) (u^n-u) = (\kappa^{-1} \cdot l)  (F^n-F(x,\overline{v},u)).$$
The standard energy estimate then gives 
\begin{equation}
\label{enun-u}
\begin{split}
\| u^n-u \|_{L^2}(t) 
& \le C \int_0^t \| F^n-F(x,\overline{v},u) \|_{L^2}(t') dt' \\
& \le C \int_0^t \left( \| F_0^n - F_0(x,\overline{v}) \|_{L^2}(t')  
+ \| F_1^n  R^n u^n - F_1(x,\overline{v}) u \|_{L^2}(t') \right) dt' .
\end{split}
\end{equation}

Thanks to the growth conditions  \eqref{maj1} of $F$ and to the pointwise bound Lemma~\ref{lem:bornesregMBgen}, \eqref{borne1} of the $v^n$, there holds, for all $n,m \in \N$ 
and $(t,x) \in \lbrack 0,T \rbrack  \times \R^3$, 
\begin{eqnarray}
 \label{avant3} | F_i^n (t,x) - F_i  ( x,\overline{v} (t,x)) | &\leqslant&  C_F ( e^{KT} \|v_{init}\|_{L^\infty}   ) |\overline{v^n}  (t,x) - \overline{v}  (t,x)|.
\end{eqnarray}
In particular this yields that for any $t'  \in \lbrack 0,T \rbrack $, 
$\|F_0^n  - F_0(x,\overline{v})\|_{L^2} (t')$
 goes to zero as $n$ goes to infinity. 

Furthermore, 
$$|F_1^n  R^n u^n - F_1(x,\overline{v}) u| \le 
|F_1^n | |R^n (u^n-u)| + |F_1^n | |(R^n-Id)u| 
+ |F_1^n  - F_1(x,\overline{v})| |u|.$$
Thanks to the $L^\infty$ bounds on $v^n$ (cf.  Lemma~\ref{lem:bornesregMBgen}, \eqref{borne1}) and on $F_1^n$ (cf. \eqref{avant1}), and to the property 
\eqref{opreg1} of the operator $R^n$, the first term 
in the r.h.s. above is bounded by $C_F ( e^{KT} \|v_{init}\|_{L^\infty}   ) |u^n-u|$. 
In the same way, the second term goes to zero in $L^2$ as $n$ goes to infinity. 
Finally, up to a subsequence, the third term tends to zero almost everywhere, 
and is bounded by $C(F,T,\| v_{init} \|_{L^\infty}) |u|$. By dominated convergence, 
it thus goes to zero in $L^2$. 
Finally, we get from \eqref{enun-u}:
$$\| u^n-u \|_{L^2}(t) \le C(F,T,\| v_{init} \|_{L^\infty}) 
\int_0^t \| u^n-u \|_{L^2}(t') dt' + o(1),$$
and Gronwall's Lemma shows that $u^n$ converges to $u$ in $C([0,T],L^2)$.
\end{proof}

Thanks to a diagonal extraction process (using times $T\in\N^\star$), 
Proposition~\ref{prop:CVforte} produces $U \in C([0,\infty),L^2)$, 
solution to \eqref{divgen1}-\eqref{MBgen2} with  $U_{init} $ as initial data. 
Estimates \eqref{estimweak1} and \eqref{estimweak2} are then straightforward. 
To prove Theorem~\ref{th:weak}, there remains to show its last statement: the stability property. 
To this end, consider a sequence $(U_{init}^n)_{n\in\N}$, bounded in $\Ldiv$, 
and converging to $U_{init}$ in $L^2$. It generates a (sub)sequence of solutions 
$(U^n)_{n\in\N}$, with, from the bounds \eqref{estimweak1}, \eqref{estimweak2} 
in Theorem~~\ref{th:weak}, $u^n$ converging to  $u$ in  $ L^\infty_{loc} ((0,\infty ),L^2 (\R^3)) 
\text{ weak } *$ and $ v^n$ converging to  $v$ in  $W^{1,\infty}_{loc} ((0,\infty ),L^2 (\Omega))$ 
weakly * and in  $ L^\infty_{loc} ((0,\infty ),L^\infty (\Omega))   \text{ weak } *$. Then, define 
the weight $\rho$ from \eqref{defpoids} and estimate $v^n -v^m$ as in \eqref{poids}, 
with $R^n u^n $ and $R^mu^m$ replaced with $u^n $ and $u^m$, respectively. This leads 
to the analogue to \eqref{estimenergv}, with no $h_{3,m,n }$ and $h_{3,n ,m}$ terms, 
and no $R^n $ in $h_{1,m,n }$ and $h_{2,m,n }$. Apply Lemma~\ref{compactness} and 
Lemma~\ref{commutator}, \eqref{commutator2} (instead of \eqref{commutator1}), to 
get strong $C([0,T],L^2)$ convergence of $v^n$ towards $v$ (in \eqref{estimvp-vm}, 
the term $\|v_{init}^n -v_{init}^m\|_{L^2}$ goes to zero, and contributes to $\delta$). 
Strong convergence of the fields $u^n$ is then obtained as above , with an initial 
term $\|u_{init}^n-u_{init}\|_{L^2}$ going to zero added to the r.h.s. of 
\eqref{enun-u}.

The same process proves Proposition \ref{weakprinciple}.

 
\section{Propagation of smoothness and uniqueness: 
proof of Theorem~\ref{unique}}
\label{proofunique} 

It is worth noting that, under the smoothness assumption on $\eps$ 
and $\mu$ in \eqref{hyp:OmegaBded}, $u  \in L^2  (\R^3 ,\R^6 )$ 
with $Pu  \in H^\mu  (\R^3 ,\R^6)$ iff $u  \in L^2  (\R^3 ,\R^6 )$ 
with $\curl u_i  \in H^{\mu-1}  (\R^3 ,\R^6)$ for $i=1,2$.

We thus split the proof of Theorem \ref{unique} in several steps. 
In Section \ref{startPre}, we isolate a Cauchy  problem for the projection $Pu$ of $u$.  
This allows some dispersive estimates that we etablish in 
Section \ref{Disper}, while in Section \ref{prelim2}, 
Littlewood-Paley decompositions are introduced. 
We consider first the case where $\mu $ is in $(0,1)$, then we 
prove the part \eqref{Unique1} of  Theorem ~\ref{unique} in the 
case  $\mu = 1$, which concerns the propagation of smoothness, 
and finally the part  \eqref{Unique2}, which concerns uniqueness. 

\begin{Remark}
Let us mention that in the proof of the propagation of $H^1$ 
regularity given in  \cite{dumas}, the step ``$\mu\in(0,1)$'' 
is missing, and the resulting estimates (collected here in 
Lemma~\ref{plantageback}) are claimed without proof. 
\end{Remark}

\subsection{Preliminaries}
\label{startPre} 

\begin{Lemma}
 \label{filtrer}
For any solution $U:=(u,v)$ to \eqref{divgen1}-\eqref{MBgen2} with  
$U_{init} := (u_{init} ,v_{init}) \in L_{div}$ as initial data given 
by Theorem \ref{th:weak}, the part ${\bf u  } := Pu$ solves for 
$ x  \in  \R^3$,
\begin{eqnarray}
\label{selec1} 
( \dt + B ) {\bf u  }  &=&  P (A {\bf u  }) + P g , \\   
\label{selec2} 
{\bf u  }|_{t=0}   &=& Pu_{init} ,  
\end{eqnarray}
where 
\begin{eqnarray}
A(t,x) &:=& (\kappa^{-1} \cdot l) F_1(x,\overline{v}), \label{defA} \\  
g(t,x) &:=& (\kappa^{-1} \cdot l)   
F (x,\overline{v}, (Id-P) (\kappa^{-1} \cdot l) \overline{v}), 
\label{defg1} \\   
&=&   (\kappa^{-1} \cdot l)   F_0 (x,\overline{v}) 
+ (\kappa^{-1} \cdot l) 
F_1 (x,\overline{v}) (Id-P) (\kappa^{-1} \cdot l) \overline{v}). 
\label{defg2}
\end{eqnarray}
\end{Lemma}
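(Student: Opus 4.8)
The plan is to derive the equation for $\mathbf{u}:=Pu$ directly by applying the projector $P$ to the evolution equation \eqref{MBgen}, exploiting the two structural facts established earlier: that $P$ is the orthogonal projector onto $(\ker B)^\perp$ for the weighted inner product, and that the constraint \eqref{divgen1} holds for all $t\ge0$. First I would recall from the remark following the definition of global finite energy solutions that $(Id-P)B=0$, hence by skew-adjointness $PB=BP$ on $D(B)$ (equivalently $B(\ker B)^\perp\subset(\ker B)^\perp$, so $B$ commutes with $P$); this is what lets me write $P(\dt+B)u=(\dt+B)Pu$ after also noting $P$ is time-independent and thus commutes with $\dt$.

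Next I would apply $P$ to both sides of \eqref{MBgen}, getting
\begin{equation*}
(\dt+B)\mathbf{u}=P\big((\kappa^{-1}\cdot l)F(x,\overline v,u)\big).
\end{equation*}
The point is now to rewrite $F(x,\overline v,u)$ using affineness \eqref{affine} and the splitting $u=Pu+(Id-P)u=\mathbf{u}+(Id-P)u$. Since \eqref{divgen1} gives $(Id-P)\big(u-(\kappa^{-1}\cdot l)\overline v\big)=0$, i.e. $(Id-P)u=(Id-P)(\kappa^{-1}\cdot l)\overline v$, I substitute and use $F(x,\overline v,u)=F_0(x,\overline v)+F_1(x,\overline v)u=F_0(x,\overline v)+F_1(x,\overline v)\mathbf{u}+F_1(x,\overline v)(Id-P)(\kappa^{-1}\cdot l)\overline v$. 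The term $(\kappa^{-1}\cdot l)F_1(x,\overline v)\mathbf{u}$ is exactly $A\mathbf{u}$ with $A$ as in \eqref{defA}, and the remaining two terms assemble into $g$ as in \eqref{defg1}–\eqref{defg2}, namely $g=(\kappa^{-1}\cdot l)F\big(x,\overline v,(Id-P)(\kappa^{-1}\cdot l)\overline v\big)$. Applying $P$ then yields \eqref{selec1}. The initial condition \eqref{selec2} is immediate: apply $P$ to $u|_{t=0}=u_{init}$.

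There is really no serious obstacle here; the only point requiring a little care is justifying the manipulations at the level of regularity we actually have — the identities should be read in the distributional sense, using that $v\in L^\infty_{loc}((0,\infty),L^\infty(\Omega,\R^d))$ so that $A$ and $g$ are well-defined ($A\in L^\infty_{loc}$ in $(t,x)$ by the bound \eqref{maj1} on $F_1$ and $l\in L^\infty$, and $g\in L^\infty_{loc}((0,\infty),L^2+L^p)$-type bounds from \eqref{maj1} and the pointwise bound on $v$), and that the affine structure makes every product meaningful. One should also note $(\kappa^{-1}\cdot l)\overline v$ need only be applied after extending $v$ by zero, which is consistent since $F(x,0,u)\equiv 0$ makes \eqref{MBgen2} and the associated identities extend to all of $\R^3$. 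I would therefore present the argument as: (i) $P$ commutes with $\dt$ and with $B$; (ii) substitute the constraint into the affine expansion of $F$; (iii) regroup and apply $P$; (iv) read off the initial data. The equality \eqref{defg2} is just the affine expansion \eqref{affine} written out, so it requires no separate argument.
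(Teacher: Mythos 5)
Your proposal is correct and follows essentially the same route as the paper: apply $P$ to \eqref{MBgen} using $P\dt=\dt P$ and $PB=BP$, split $F$ via the affine structure \eqref{affine}, replace $(Id-P)u$ by $(Id-P)(\kappa^{-1}\cdot l)\overline{v}$ using the constraint \eqref{divgen1}, and regroup. You merely spell out in more detail the justification of the commutation $PB=BP$ and the regularity considerations, which the paper leaves implicit.
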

\begin{proof}
First, apply the projector $P$ to the system \eqref{MBgen}, 
observing that  $P$  commutes with both $\partial_t$ and $B$.
Then, split $F$ according to \eqref{affine}, 
split $u$ into $u = {\bf u  } + (Id-P)u$ 
and finally use the constraint \eqref{divgen1}.
\end{proof}
The projectors $P_i$, $i=1,2$, defined on $L^2(\R^3,\R^3)$, 
extend to $L^p(\R^3,\R^3)$ (this result extends the classical one 
by Calder\'on and Zygmund \cite{CZ52} on singular integrals, 
in the spirit of the extension by Judovi{\v{c}} \cite{yudo}):
\begin{Lemma}[Starynkevitch \cite{starynkevitchPhD}, Lemma 3.13]
\label{PonLp}
Under assumption \eqref{hyp:OmegaBded}, the projectors $P_i$, $i=1,2$, 
extend to $L^p(\R^3,\R^3)$  and for all $p_0>1$, there exists $C>0$ 
such that  for all $p\in  \lbrack p_0,\infty)$, their norm   
from $L^p(\R^3,\R^3)$ into itself are less than $Cp$.
\end{Lemma}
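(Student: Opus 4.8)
The statement to prove (Lemma~\ref{PonLp}) asserts that the Leray-type projectors $P_i$ associated with the weighted divergence-free condition $\Div(\kappa_i u_i) = 0$ are bounded on $L^p(\R^3,\R^3)$ for $1 < p < \infty$, with operator norm growing at most linearly in $p$ for large $p$.

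\bigskip

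The plan is to reduce the weighted projector to a compact perturbation of the classical Leray projector, for which the $L^p$-boundedness and the linear growth of the norm are already available from Calder\'on--Zygmund theory together with the sharp tracking of constants in the Marcinkiewicz/Calder\'on--Zygmund interpolation scheme (this is the ``$Cp$'' behaviour, well known in the fluids literature, e.g. from the Yudovich uniqueness argument). First I would write $\Div(\kappa_i u_i) = 0$ explicitly. Decomposing $u_i = \nabla \phi_i + w_i$ with $w_i$ classically divergence free, the condition becomes an elliptic equation for $\phi_i$ of the form $\Div(\kappa_i \nabla \phi_i) = -\Div(\kappa_i w_i)$, i.e.
\begin{eqnarray*}
\Delta \phi_i = -\Div\big((\kappa_i - 1)(\nabla\phi_i + w_i)\big) - \Div w_i = -\Div\big((\kappa_i - 1) u_i\big),
\end{eqnarray*}
since $\Div w_i = 0$. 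Because $\kappa_i - 1 \in \mathcal{C}^\infty_\mathcal{K}(\R^3)$ by \eqref{hyp:OmegaBded}, the operator $u_i \mapsto \nabla \Delta^{-1} \Div\big((\kappa_i - 1) u_i\big)$ is, roughly, a composition of multiplication by a compactly supported smooth function with a Calder\'on--Zygmund operator; one then sets up a fixed-point (Neumann series / Fredholm) argument in $L^p$ to solve for the irrotational part $\nabla\phi_i = (Id - P_i) u_i$, hence obtain $P_i$.

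\bigskip

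More precisely, write $P_i = Q_0 - (Id - P_i)$ where $Q_0$ is the classical Leray projector (onto $\Div = 0$), and observe that $(Id - Q_0) u_i = \nabla \Delta^{-1}\Div u_i$, $(Id - P_i) u_i = \nabla \phi_i$, so that $(P_i - Q_0) u_i = \nabla \Delta^{-1}\Div\big((\kappa_i-1)\,u_i\big) + (\text{corrections})$ reorganises into an identity $(Id + T_i) (Id - P_i) u_i = (Id - Q_0) u_i$ where $T_i := \nabla \Delta^{-1}\Div\big((\kappa_i - 1)\cdot\big)$ maps $\nabla\phi_i$ back into a gradient. The key point is that $T_i$, as an operator on the closed subspace of gradient fields in $L^p$, is bounded with norm controlled by $\|\kappa_i - 1\|_{L^\infty}$ times the $L^p$-norm of $\nabla\Delta^{-1}\Div$ — but crucially, one can arrange (using a partition of unity subordinate to a fine cover of $\mathcal{K}$, or by a standard commutator estimate exploiting smoothness of $\kappa_i - 1$) that the relevant operator differs from a genuine contraction by a compact operator, so that $Id + T_i$ is invertible on $L^p$ by the Fredholm alternative, its inverse being bounded on $L^p$ for every $p \in (1,\infty)$. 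Then $P_i = Q_0 - \nabla\phi_i[\,\cdot\,]$ is bounded on $L^p$, and to get the linear-in-$p$ bound one notes that the classical Leray projector $Q_0$ has $L^p$-norm $\le C p$ (this is the sharp Riesz-transform bound), that $\nabla\Delta^{-1}\Div$ shares this bound, and that the Fredholm inverse contributes a $p$-independent multiplicative constant on any interval $[p_0, \infty)$ (its operator norm is continuous in $p$ and, by duality and interpolation, uniformly bounded on $[p_0,\infty)$, since for $p$ large the perturbation $T_i$ — being essentially $\|\kappa_i - 1\|_\infty$ times a zero-order operator with a gain from the compact support — does not blow up).

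\bigskip

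The main obstacle I anticipate is making the compactness/invertibility of $Id + T_i$ on $L^p$ rigorous with \emph{uniform} control of the inverse as $p \to \infty$: one must either (i) localise finely enough that $T_i$ restricted to each piece has small norm (exploiting that multiplication by $(\kappa_i - 1)$ composed with the singular integral, cut to a small ball, has small $L^p$-norm by a commutator/oscillation estimate using smoothness of $\kappa_i - 1$ — this is where $\mathcal{C}^\infty$ rather than merely $L^\infty$ is used), and then sum the pieces; or (ii) quote directly the elliptic regularity for $\Div(\kappa_i \nabla\cdot)$ in $L^p$ with smooth coefficients, which gives $\|\nabla\phi_i\|_{L^p} \le C_p \|(\kappa_i-1)u_i\|_{L^p}$, and then track that $C_p$ grows at most linearly. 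I would follow route (i), since it most transparently yields the $Cp$ constant, and cite Starynkevitch \cite{starynkevitchPhD} for the detailed bookkeeping; the remaining verifications (that $\operatorname{ran} P_i$ and $\operatorname{ran}(Id-P_i)$ are as described in \eqref{eq:ranP,kerP} also at the $L^p$ level, and that the extension agrees with the $L^2$ projector on $L^2 \cap L^p$) are routine density arguments.
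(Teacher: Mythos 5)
The paper itself gives no proof of this lemma---it is simply cited from Starynkevitch's thesis, with the remark that it is ``in the spirit of the extension by Judovi{\v{c}}''---so there is no internal proof to compare against; I can only assess your sketch on its own terms. Your reduction to a perturbation of the unweighted Leray projector $Q_0$ is a natural starting point, and the resulting identity $(Id+T_i)(Id-P_i)u=(Id-Q_0)(\kappa_i u)$ with $T_i:=(Id-Q_0)M_{\kappa_i-1}$ is essentially correct (modulo a notational slip: you use $\phi_i$ first for the unweighted potential of an element of $\operatorname{ran}P_i$, then for the weighted potential of an arbitrary $u_i$). But the crucial step---uniform boundedness of $(Id+T_i)^{-1}$ on $L^p$ over $p\in[p_0,\infty)$---is asserted, not proved, and your parenthetical justification for it is wrong. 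You claim $T_i$ ``does not blow up'' for large $p$ thanks to the compact support of $\kappa_i-1$; but $T_i$ contains the full singular integral $\nabla\Delta^{-1}\Div$, whose $L^p$ operator norm grows like $p$, and compact support of the multiplier does not tame the local kernel singularity driving that growth. So $\|T_i\|_{L^p\to L^p}\sim p$ blows up, the Neumann series never converges for large $p$, and the Fredholm alternative at each fixed $p$ says nothing about how the norm of the inverse behaves as $p\to\infty$. As written, your argument gives $L^p$-boundedness of $P_i$ for each fixed $p$, but not the $Cp$ bound, which is the whole point of the lemma.

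To rescue the approach along your lines you should exploit the commutator structure more precisely. Acting on gradient fields $v=(Id-Q_0)v$, one has $T_i v=(\kappa_i-1)v+K_iv$ with $K_i:=[Id-Q_0,M_{\kappa_i-1}]$, hence $(Id+T_i)v=\kappa_i v+K_iv$. Since $\kappa_i-1\in\mathcal{C}^\infty_\mathcal{K}$, the commutator $K_i$ is an order $(-1)$ pseudodifferential operator with compactly supported coefficients; its kernel is locally $|x-y|^{-2}$ and rapidly decaying, hence in $L^1$, so by Young's inequality $\|K_i\|_{L^p\to L^p}$ is bounded \emph{uniformly} in $p\in[1,\infty]$. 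The object to invert is therefore $M_{\kappa_i}+K_i$, whose norm is $p$-uniform, but one must still show the inverse is bounded uniformly on $[p_0,\infty)$---this needs an argument beyond pointwise Fredholm (endpoint estimates plus interpolation, or Yudovich-style constant tracking), and is exactly where the ``detailed bookkeeping'' lives. Your alternative route (ii), tracking $p$-dependence directly in the $L^p$ elliptic regularity for $\Div(\kappa_i\nabla\cdot)$, is actually closer to what the paper's own citation hints at and more likely to deliver the $Cp$ constant cleanly; but you do not carry it out, and closing by deferring the bookkeeping to the very reference being proved is circular.
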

We deduce estimates for the right-hand side of \eqref{selec1}:
\begin{Lemma}
 \label{plantage}
As in Theorem \ref{unique}, assume \eqref{affine}-\eqref{maj2} 
and \eqref{hyp:OmegaBded}. 
Let $U_{init} := (u_{init} ,v_{init}) \in L_{div}$, and 
let $U:=(u,v)$ be any solution to \eqref{divgen1}-\eqref{MBgen2} 
with $U_{init} := (u_{init} ,v_{init}) \in L_{div}$ as initial data 
given by Theorem \ref{th:weak}. 
The following holds true for $A$ and $g$ given 
by \eqref{defA}-\eqref{defg2}:
\begin{eqnarray}
\label{plantage1} A \in L^\infty_{loc}((0,\infty),L^\infty(\R^3)), \\ 
\label{plantage2} A \in C ( [0,\infty), L^2 (\R^3 )), \\ 
\label{plantage3} 
\partial_t A \in L^\infty_{loc}((0,\infty),L^2(\R^3 )), \\ 
\label{plantage4} 
g  \in \cap_{1 \le p < \infty} C ( [0,\infty), L^p (\R^3 )), \\ 
\label{plantage5} \partial_t g \in \cap_{1 \le q \le 2} 
L^\infty_{loc}( (0,\infty), L^p (\R^3 )). 
\end{eqnarray}
\end{Lemma}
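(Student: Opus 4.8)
The plan is to establish each of the five assertions in Lemma~\ref{plantage} directly from the definitions \eqref{defA}--\eqref{defg2} of $A$ and $g$, using the regularity properties of the solution $U=(u,v)$ furnished by Theorem~\ref{th:weak} together with the structural hypotheses \eqref{maj1} on $F_0,F_1$ and the $L^p$-boundedness of $P$ from Lemma~\ref{PonLp}. Throughout, recall that by \eqref{estimweak1} we have $v\in L^\infty_{loc}((0,\infty),L^\infty(\Omega))$ with the explicit bound $|v(t,x)|\le |v_{init}(x)|e^{Kt}$, so that on any time interval $[0,T]$ the values of $\overline v$ lie in a fixed ball $B_R$, $R=e^{KT}\|v_{init}\|_{L^\infty}$. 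Consequently, for $j=0,1$, the nonlinearities $F_j(x,\overline v(t,x))$ and $\partial_v F_j(x,\overline v(t,x))$ are bounded a.e.\ by $C_F(R)$ uniformly on $[0,T]\times\R^3$; moreover $F_j(x,\overline v)$ is supported in $\overline\Omega$ since $F_j(x,0)=0$, which together with $\overline\Omega$ compact (by \eqref{hyp:OmegaBded}) makes it automatically an $L^p$ function for every $p$. The coefficients $\kappa_i^{-1}l_i$ are bounded by \eqref{hyp:kappa} and the hypothesis $l\in(L^\infty)^2$.

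First I would treat $A$. Since $A=(\kappa^{-1}\cdot l)F_1(x,\overline v)$, the bound $\|F_1(\cdot,\overline v(t))\|_{L^\infty}\le C_F(R)$ on $[0,T]$ gives \eqref{plantage1} immediately, and the compact support in $\overline\Omega$ upgrades this to $A(t)\in L^2(\R^3)$ with a bound uniform on $[0,T]$, giving the boundedness part of \eqref{plantage2}. Time continuity of $t\mapsto A(t)\in L^2$ follows from continuity of $t\mapsto v(t)\in L^2(\Omega)$ (which holds since $v\in C([0,\infty),L^2)$, a consequence of $U\in C([0,\infty),{\bf L}^2)$) composed with the Lipschitz map $v\mapsto F_1(x,v)$ on the ball $B_R$ (Lipschitz with constant $C_F(R)$ by \eqref{maj1}) and the bounded multiplier $\kappa^{-1}\cdot l$; a short dominated-convergence argument handles the passage from a.e.\ convergence to $L^2$ convergence using the uniform $L^\infty$ and compact-support bounds. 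For \eqref{plantage3}, differentiate $A$ in time using the chain rule: $\partial_t A=(\kappa^{-1}\cdot l)\,\partial_v F_1(x,\overline v)\,\partial_t\overline v$, and $\partial_t v=F(x,v,u)\in L^2(\Omega)$ locally in time with the bound in \eqref{estimweak3}, while $\partial_v F_1(x,\overline v)$ is bounded by $C_F(R)$; this gives $\partial_t A\in L^\infty_{loc}((0,\infty),L^2(\R^3))$. (One should note that this differentiation is justified because $v\in W^{1,\infty}_{loc}((0,\infty),L^2(\Omega))$ by \eqref{estimweak3}.)

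Next I would treat $g$, using the expression \eqref{defg2}, $g=(\kappa^{-1}\cdot l)F_0(x,\overline v)+(\kappa^{-1}\cdot l)F_1(x,\overline v)\,(Id-P)(\kappa^{-1}\cdot l)\overline v$. The first summand is $L^p$-continuous in time for every $p<\infty$ by the same argument as for $A$ (bounded, compactly supported, time-continuous). For the second summand the key new ingredient is Lemma~\ref{PonLp}: since $\overline v\in L^\infty\cap L^2$ with compact support, we have $(\kappa^{-1}\cdot l)\overline v\in L^p(\R^3)$ for all $p\in[2,\infty)$, hence $(Id-P)(\kappa^{-1}\cdot l)\overline v\in L^p$ for all such $p$ with norm controlled by $Cp\|\overline v\|_{L^p\cap L^2}$; multiplying by the bounded, compactly supported factor $(\kappa^{-1}\cdot l)F_1(x,\overline v)$ and using H\"older brings us back to any finite exponent, giving $g(t)\in L^p(\R^3)$ for all $1\le p<\infty$. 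Time continuity in $L^p$ follows from continuity of $t\mapsto\overline v(t)$ in $L^q$ for all finite $q$ (interpolating the $L^2$-continuity from $U\in C([0,\infty),{\bf L}^2)$ with the uniform $L^\infty$ bound) together with $L^p$-continuity of $Id-P$ and the Lipschitz dependence of $F_1$; this gives \eqref{plantage4}. Finally, for \eqref{plantage5}, differentiate in time: $\partial_t g=(\kappa^{-1}\cdot l)\partial_v F_0(x,\overline v)\partial_t\overline v+(\kappa^{-1}\cdot l)\big[\partial_v F_1(x,\overline v)\partial_t\overline v\big](Id-P)(\kappa^{-1}\cdot l)\overline v+(\kappa^{-1}\cdot l)F_1(x,\overline v)(Id-P)(\kappa^{-1}\cdot l)\partial_t\overline v$, and in each term $\partial_t\overline v=\overline{F(x,v,u)}\in L^2(\R^3)$ locally uniformly in time by \eqref{estimweak3}; the first two terms are products of this $L^2$ function with $L^\infty$ (respectively $L^\infty\cdot L^p$, any finite $p$) factors, and the third is the bounded compactly supported $F_1$ times $(Id-P)$ applied to an $L^2$ function, which lies in $L^2$ with operator norm $O(1)$ from Lemma~\ref{PonLp} with $p_0=2$. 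By H\"older this yields $\partial_t g\in L^\infty_{loc}((0,\infty),L^q(\R^3))$ for every $q\in[1,2]$.

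The only genuinely delicate point — and the one I would write out most carefully — is the interaction of the non-local operator $Id-P$ with $L^p$ spaces for $p\ne2$ in the proofs of \eqref{plantage4} and \eqref{plantage5}: the classical theory only gives $L^2$-boundedness, and it is precisely Lemma~\ref{PonLp} (Starynkevitch's extension, valid under the smoothness assumption \eqref{hyp:OmegaBded} on $\kappa_1,\kappa_2$) that supplies the needed $L^p$ bounds with the linear-in-$p$ growth of the norm; this linear growth is harmless here since we only need finiteness for each fixed $p$, not uniformity as $p\to\infty$. Everything else is a routine bookkeeping exercise combining the pointwise bounds \eqref{maj1}, the $L^\infty$ and $\partial_t$-in-$L^2$ control of $v$ from Theorem~\ref{th:weak}, the compactness of $\overline\Omega$, and H\"older's inequality, together with dominated-convergence arguments to promote a.e.\ time-continuity statements to $L^p$-valued ones. \fin
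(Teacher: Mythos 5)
Your proof is correct and takes essentially the same route as the paper's: pointwise bounds on $F_j$ and $\partial_v F_j$ from \eqref{maj1} combined with the a priori $L^\infty$ bound \eqref{estimweak1}, the $C_tL^2$ and $W^{1,\infty}_tL^2$ regularity of $v$ from Theorem~\ref{th:weak}, boundedness of $\Omega$ for low $p$ and interpolation for high $p$, and Lemma~\ref{PonLp} for the $L^p$-boundedness of $Id-P$, with \eqref{plantage5} obtained by the same chain-rule expansion of $\partial_t g$ that the paper displays. The only differences are cosmetic (you spell out the Lipschitz/dominated-convergence step for $L^2$-continuity of $A$ and the H\"older bookkeeping more explicitly than the paper does).
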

\begin{proof}
For all $t,t' \geqslant 0$, there holds
\begin{eqnarray}
\| A(t) \|_{L^\infty(\R^3 )} 
\leqslant \| \kappa^{-1} \cdot l \|_{L^\infty(\R^3 )}  
C_F (\|v_{init}\|_{L^\infty}   e^{Kt} ), \\  
\| A(t) - A(t') \|_{L^2   (\R^3 )} 
\leqslant \| \kappa^{-1} \cdot l \|_{L^\infty(\R^3 )}  
C_F (\|v_{init}\|_{L^\infty} e^{Kt}) \|v(t)-v(t')\|_{L^2(\Omega)}, \\ 
\| \partial_t A (t)\|_{L^2(\R^3 )} 
\leqslant  \| \kappa^{-1} \cdot l \|_{L^\infty(\R^3 )}  
C_F (\|v_{init}\|_{L^\infty}e^{Kt}) \|\partial_t v(t)\|_{L^2(\Omega)},
\end{eqnarray}
what yields  estimates \eqref{plantage1}-\eqref{plantage3}. 

Since $v \in C([0,\infty),L^2(\Omega)) \cap 
L^\infty_{loc}((0,\infty), L^\infty(\Omega))$, 
we have $v \in \cap_{p \geqslant 1} C([0,\infty),L^p(\Omega ))$ 
-- using the boundedness of $\Omega$ for $p<2$, and  
by interpolation for $p>2$. 
Lemma \ref{PonLp} then yields \eqref{plantage4}. 
Next, using that 
$| F_i (x,v) | \leqslant C_F (\|v_{init}\|_{L^\infty} e^{Kt}) |v|$ 
for $i=0,1$, we infer from \eqref{MBgen2} that $\partial_t v 
\in  \cap_{1 \le q \le 2} L^\infty_{loc}((0,\infty),L^p(\R^3))$. 
Since
\begin{eqnarray*}
\partial_t g (t,x) =   (\kappa^{-1} \cdot l)  \{ \partial_v F_0 (x,\overline{v}) \cdot \partial_t \overline{v} +    F_1 (x,\overline{v})  (Id-P)  (\kappa^{-1} \cdot l) \cdot \partial_t  \overline{v} +
(\partial_v F_1 (x,\overline{v}) \cdot \partial_t  \overline{v})  \cdot (Id-P)    (\kappa^{-1} \cdot l)   \overline{v}      \},
\end{eqnarray*}
thanks to Lemma \ref{PonLp}, we finally get \eqref{plantage5}.
\end{proof}
Also, for a given $v$, the ``fields part" $u$ is in fact uniquely determined: 
\begin{Lemma}
 \label{keskonepropre}
Let $A \in  L^\infty_{loc} ((0,\infty), L^\infty (\R^3 ))$ and 
$g  \in L^1_{loc} ((0,\infty), L^2 (\R^3 ))$.
For any $u_{init} \in L^2 (\R^3 )$, there exists only one solution 
${\bf u} \in C ( [0,\infty), L^2 (\R^3 ))$ to \eqref{selec1}-\eqref{selec2} with $P u_{init}$ as initial data. 
Furthermore, it satisfies ${\bf u} =P{\bf u}$.
\end{Lemma}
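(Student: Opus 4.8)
The plan is to prove existence and uniqueness separately, and then the fixed-point property $\mathbf{u} = P\mathbf{u}$, by exploiting that $B$ generates a strongly continuous (unitary) group $\exp(-tB)$ on the Hilbert space $L^2(\R^3,\R^6)$ equipped with $\langle\cdot,\cdot\rangle_{\kappa_1,\kappa_2}$. First I would recast \eqref{selec1}-\eqref{selec2} in Duhamel form: a function $\mathbf{u}\in C([0,\infty),L^2(\R^3))$ solves the system if and only if, for all $t\ge0$,
\begin{equation*}
\mathbf{u}(t) = \exp(-tB)\,Pu_{init} + \int_0^t \exp\big((t-t')B\big)\,\big( P(A(t')\mathbf{u}(t')) + P g(t')\big)\,dt'.
\end{equation*}
Here the integrand is in $L^1_{loc}$ in time because $A\in L^\infty_{loc}((0,\infty),L^\infty)$, $P$ is bounded on $L^2$, and $g\in L^1_{loc}((0,\infty),L^2)$; the group $\exp((t-t')B)$ is an isometry for the weighted norm, hence bounded on $L^2$.

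For existence and uniqueness I would run a fixed-point argument on $C([0,T],L^2(\R^3))$ for each finite $T$, exactly as in the proof of Lemma~\ref{lem:bornesregMBgen}. The map sending $\mathbf{u}$ to the right-hand side of the Duhamel formula is affine, and its linear part has operator norm on $C([0,T],L^2)$ bounded by $T\,\|P\|_{L^2\to L^2}\,\|A\|_{L^\infty((0,T),L^\infty)}$, which is a contraction for $T$ small enough (depending only on $\|A\|_{L^\infty_tL^\infty_x}$, hence uniform in the initial data). This yields a unique solution on a short interval, and since the smallness of $T$ does not depend on the size of the datum, the solution extends to all of $[0,\infty)$ by iterating on successive intervals; alternatively one proves the \emph{a priori} bound $\|\mathbf{u}(t)\|_{L^2}\le C(t)(\|u_{init}\|_{L^2} + \int_0^t\|g(t')\|_{L^2}\,dt')$ directly from Duhamel via Gronwall's lemma, which also gives global uniqueness since the difference of two solutions solves the homogeneous equation with zero data.

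Finally, to show $\mathbf{u}=P\mathbf{u}$, I would apply $(Id-P)$ to \eqref{selec1}. Since $P$ is an orthogonal projector for $\langle\cdot,\cdot\rangle_{\kappa_1,\kappa_2}$ it commutes with $\exp(-tB)$, and $(Id-P)B=0$ by definition of $P$; moreover $(Id-P)P=0$ kills both source terms $P(A\mathbf{u})$ and $Pg$. Hence $w:=(Id-P)\mathbf{u}$ satisfies $\partial_t w = -(Id-P)B\mathbf{u} + (Id-P)(P(A\mathbf{u})+Pg) = 0$ in the distributional sense, with $w|_{t=0}=(Id-P)Pu_{init}=0$, so $w\equiv0$ and $\mathbf{u}=P\mathbf{u}$. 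I do not expect a serious obstacle here; the only point requiring minor care is the justification that the Duhamel integrand lies in $L^1_{loc}((0,\infty),L^2)$ so that the semigroup formula makes sense and the fixed-point argument applies on each $[0,T]$, which follows from the regularity hypotheses on $A$ and $g$ assumed in the statement (and verified in Lemma~\ref{plantage} for the $A,g$ of interest).
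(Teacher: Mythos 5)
Your proof is correct and follows essentially the same route as the paper: the Duhamel representation combined with Gronwall's lemma for uniqueness, and applying $(Id-P)$ (using $(Id-P)B=0$ and $(Id-P)P=0$) to deduce $\mathbf{u}=P\mathbf{u}$. The only minor difference is that you establish existence directly by a fixed-point/contraction argument, whereas the paper invokes Lemma~\ref{filtrer} (which produces $Pu$ from a finite-energy solution of the nonlinear system); your version is slightly more self-contained and works for arbitrary $A$, $g$ with the stated regularity.
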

\begin{proof}
Existence is given by Lemma \ref{filtrer}. To prove uniqueness, consider two solutions  ${\bf u}_1 $ and ${\bf u}_2$ in 
$C([0,\infty), L^2(\R^3))$ to \eqref{selec1}-\eqref{selec2}, 
and $T>0$. Then 
\begin{eqnarray*}
\forall t\ge0, \quad ({\bf u}_1 - {\bf u}_2) (t) = 
\int_0^t e^{i(t-s)B} PA ({\bf u}_1 - {\bf u}_2) (s) ds ,
\end{eqnarray*}
so that, using  \eqref{plantage1}, for $t  \in [0,T]$, 
\begin{eqnarray*}
\| ({\bf u}_1 - {\bf u}_2)(t) \|_{L^2(\R^3)}  \leq C(\kappa,T) 
\int_0^t \| ({\bf u}_1 - {\bf u}_2) (s) \|_{L^2(\R^3)} ds . 
\end{eqnarray*}
Hence, by Gronwall's Lemma, ${\bf u}_1 = {\bf u}_2$ on $[0,T]$, 
for any $T>0$. Thus, there is only one solution ${\bf u}$. 
Finally, in the same way, ${\bf u}-P{\bf u}$ simply satisfies:
\begin{eqnarray*}
\forall t\ge0, \quad (\dt+B) ({\bf u} - P{\bf u}) (t) = 0 ,
\end{eqnarray*}
so that ${\bf u} = P{\bf u}$.
\end{proof}

\subsection{Technical interlude $2$: Fourier analysis}
\label{prelim2} 
 
We recall  the existence of a smooth dyadic partition of unity:  
there exist two radial bump functions 
$\chi $ and $\phi$  valued in the interval $\lbrack 0,1 \rbrack$, supported respectively in the ball  $B(0,4/3) := \{ | \xi | < 4/3 \}$ and in the annulus $ C(3/4,8/3) := \{ 3/4 < | \xi | < 8/3 \}$, 
such that 
\begin{eqnarray*}
\forall \xi \in \R^3 , \quad \chi (\xi) 
+ \sum_{ j \geqslant 0 } \phi  (2^{-j} \xi ) = 1, \qquad\qquad 
\forall \xi \in \R^3 \setminus \{0\} ,\quad   
\sum_{ j \in \Z} \phi  (2^{-j} \xi ) = 1 , \\ 
| j-j' |  \geqslant 2 \Rightarrow 
\text{ supp }  \phi  (2^{-j}  \cdot  ) \cap   
\text{ supp } \phi  (2^{-j'} \cdot ) =  \emptyset , \qquad
j \geqslant 1  \Rightarrow   
\text{ supp }  \chi  (2^{-j}  \cdot  ) \cap   
\text{ supp } \phi  (2^{-j} \cdot ) =  \emptyset .
\end{eqnarray*}
The Fourier transform $\mathcal{F}$ is defined on the space of integrable functions $f \in L^1 (\R^3)$ by 
$(\mathcal{F} f) (\xi) := \int_{\R^3} e^{-2i\pi x\cdot\xi} f(x) dx$, 
and extended to an automorphism of the space $\mathcal{S}'(\R^3)$  
of  tempered distributions, which is the dual of the Schwartz space 
$\mathcal{S}(\R^3)$ of rapidly decreasing functions.
  
The so-called dyadic blocks $\Delta_j $ correspond to the Fourier 
multipliers 
$ \Delta_j :=  \phi (2^{-j} D) $, that is
\begin{eqnarray*}
\Delta_j u (x):=  2^{3j} \int_{\R^3}  \tilde{h} (2^{j}y) u(x-y) dy  
\quad \text{ for }  j  \geqslant 0 , \quad \text{ where }   
\tilde{h} :=  \mathcal{F}^{-1}  \phi .
\end{eqnarray*}
We also introduce $S_0  :=  \chi (D) $, that is
\begin{eqnarray*}
S_0 u (x):=   \int_{\R^3  }  {h} (y) u(x-y) dy  , \quad 
\text{ where }   {h} :=  \mathcal{F}^{-1}  \chi .
\end{eqnarray*}
We will use the inhomogeneous  Littlewood-Paley decomposition 
$ Id =   S_{-1} + \sum_{ j \in \N } \Delta_j   $, which holds 
in  the space of tempered distributions $\mathcal{S}' (\R^3)$, 
and the homogeneous  Littlewood-Paley decomposition 
$ Id =  \sum_{ j \in \Z } \Delta_j   $, which holds in  
$\mathcal{S}'_h (\R^3)$, the space of tempered distributions $u$ 
such that $\lim_{j \rightarrow - \infty} 
\| \sum_{ k \leqslant j } \Delta_k  u  \|_{L^\infty(\R^3 )} = 0$.
    
We now recall the definition of the   inhomogeneous 
(respectively homogeneous)  Besov spaces  
$ {B}^\lambda_{p,q}  $ (resp. $ \dot{B}^\lambda_{p,q}  $) on  $\R^3$ 
which are, for  $\lambda  \in \R$ (the smoothness index), 
$p,q  \in \lbrack 1,+  \infty \rbrack$ 
(respectively the integral-exponent and the sum-exponent), 
the spaces of tempered distributions $u$  in  $\mathcal{S}'(\R^3)$  
(resp. $\mathcal{S}'_h (\R^3)$) such that
\begin{eqnarray*}
\| f \|_{{B}^\lambda_{p,q} (\R^3)} := \| S_0  f \|_{ L^p (\R^3)}  
+ \| (2^{j \lambda} \| \Delta_j  f \|_{L^p(\R^3)})_{j}\|_{l^q(\N)}
\quad ( \text{resp. } \| f \|_{ \dot{B}^\lambda_{p,q}(\R^3)} :=  
\| (2^{j \lambda} \| \Delta_j f \|_{L^p(\R^3)})_{j } \|_{l^q(\Z)} )
\end{eqnarray*}
is finite. 
These Banach spaces do not depend on the choice of the dyadic 
partition above (cf. for instance the book \cite{bcd}).

\subsection{Dispersion}
\label{Disper} 

Propagation of  smoothness or singularities for solutions to 
hyperbolic Cauchy problems, such as
\begin{align}
\label{R1} 
L u := (\partial_t + B  )u=  f, \text{ with }   u  |_{t=0} = u_{init} ,
\end{align}
obeys the laws of geometrical optics. Let us refer here to the 
survey  \cite{Garding} by G{\aa}rding 
for an introduction to the subject. 
The characteristic variety of the operator $ L$  is defined as
\begin{eqnarray*}
\text{Char} (L) &:=& \{  (t,x, \tau,\xi) \in \R  \times \R^3 
\times \C  \times (\R^3 \setminus \{0\}) 
\mid \det L (t,x,\tau,\xi ) = 0\} ,
\end{eqnarray*}
where $L (t,x, \tau,\xi ) $ denotes the (principal) symbol  of 
the operator $L$, which is the $6 \times 6$ matrix 
\begin{eqnarray*}
L (t,x,\tau,\xi) \equiv L (x,\tau,\xi) &:=& \tau Id +  B (x,\xi), 
\quad \text{ where } 
B (x,\xi ):=
\begin{bmatrix}
0 & \kappa_1(x)^{-1} \xi \wedge \cdot 
  \\  \kappa_2(x)^{-1}  \xi \wedge \cdot  & 0 
\end{bmatrix}.
 \end{eqnarray*}
For  all $(x, \xi) \in \R^3  \times (\R^3 \setminus \{0\})$, 
the matrix $ B (x,\xi )$ admits three eigenvalues: 
$\lambda_\pm (x,\xi)  :=  \pm (\kappa_1 \kappa_2(x))^{-1/2} |\xi |$ 
and $0$, each one with multiplicity $2$.
The eigenspace associated with the eigenvalue $0$ is precisely 
ran $(Id - P)$. We introduce
$$ 
P_\pm (x, \xi) := \frac{1}{2\pi i} 
\oint_{| z- \lambda_\pm (x,\xi) |=r} L (x, -z,\xi )^{-1} dz, 
$$
where $r$ is chosen small enough for $\lambda_\pm (x,\xi)$ being 
the only eigenvalue inside the circle of integration. The matrix 
$P_\pm(x,\xi)$ is the spectral projection associated with the 
eigenvalue $\lambda_\pm (x, \xi)$, that is the projection onto the 
kernel of $L \big(x,-\lambda_\pm(x,\xi),\xi \big)$ along its range. 
These spectral projections are homogeneous of degree $0$ w.r.t. $\xi$, 
and the associated pseudo-differential operators $P_\pm$ satisfy  
$ P_+ +  P_-  = Id - P $. 
In addition, $ Id-P$, $P_+ $ and $P_-$ are orthogonal projectors 
(in the weighted $L^2$ space introduced in Section~\ref{abstract})  
commuting with $B$, and acting on Besov spaces.

Now the point is that  considering the Cauchy problem \eqref{R1} for 
solutions $u$ satisfying $ (Id-P)u=0$, we select the branch of the 
characteristic variety which are curved, what generates dispersion. 
We shall need the following indices $p_1$, $r_1$, $q_1$, $s_1$, 
$\mu$, $\sigma$ and $\rho$:
\begin{eqnarray}
& \label{def:indices1} p_1 \in [2,\infty) \quad \text{and} \quad 
1/r_1 + 1/p_1 = 1/2 ; \\
& q_1 \in (1,2] \quad \text{and} \quad 1/s_1 + 1/q_1 = 3/2 ; \\
& \label{def:indices3} \mu \in \R, \quad \sigma := \mu - 1 + 2/p_1 \quad \text{and} \quad 
\rho := \mu - 1 + 2/q_1 . 
\end{eqnarray}
\begin{Proposition}
 \label{StrichartzProppalim}
Let $p_1$, $r_1$, $q_1$, $s_1$, $\mu$, $\sigma$ and $\rho$ be given 
by  \eqref{def:indices1}-\eqref{def:indices3}. 
Under assumption \eqref{hyp:kappa}, there is a non-decreasing function  
$C:(0,\infty) \rightarrow (0,\infty)$ such that, for any $T>0$, 
for any initial data $u_{init}$ in $\dot{H}^\mu$ such that 
$(Id-P)u_{init} =0$, and for any source term  $f$ in 
$L^{s_1} ((0,T), \dot{B}^\rho_{q_1 ,2} (\R^3) )$ such that 
$ (Id-P)f=0$, any (weak) solution $u$ to the Cauchy problem 
\eqref{R1} belongs to 
$L^{r_1} ((0,T), \dot{B}^\sigma_{p_1 ,2} (\R^3) )$ and satisfies 
$u=Pu$, as well as 
\begin{eqnarray} 
\label{StrichartzProppalimesti}
\| Pu \|_{L^{r_1}\left((0,T),\dot{B}^\sigma_{p_1,2}(\R^3)\right)} 
\leq C(T) \Big( \| Pu_{init} \|_{\dot{H}^\mu} 
+ \| Pf \|_{ L^{s_1}\left((0,T),\dot{B}^\rho_{q_1 ,2}(\R^3)\right)} 
\Big) .
\end{eqnarray}
In the case $p_1 = 2$ (hence $r_1 =\infty$, $\sigma=\mu$), 
the function $u=Pu$ is even in $C ([0,T], \dot{H}^\mu  (\R^3) )$.
\end{Proposition}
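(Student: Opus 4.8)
The plan is to establish the Strichartz estimate \eqref{StrichartzProppalimesti} by reducing the first-order system \eqref{R1} (restricted to the curved branches of the characteristic variety via $(Id-P)u=0$, equivalently $u=Pu$) to a half-Klein-Gordon-type wave equation, and then invoking the known dispersive estimates for the variable-coefficient wave operator together with a $TT^\star$/duality argument. First I would split $u=Pu = P_+u + P_-u$ using the spectral projectors $P_\pm$ introduced above, noting that $Id-P$, $P_+$, $P_-$ commute with $B$ and act boundedly on all the relevant Besov spaces; since the source $f$ also satisfies $(Id-P)f=0$, one writes $f = P_+f + P_-f$ and treats each branch separately. On the branch $P_\pm$, the operator $\partial_t + B$ acts (modulo lower-order, i.e. order-zero smoothing, terms coming from the fact that $P_\pm$ are pseudodifferential and $\kappa_i$ are merely $L^\infty$ --- here one needs a mild regularization argument or one simply works in the constant-coefficient-at-infinity setting that is assumed in the smoother parts) like $\partial_t \mp i (\kappa_1\kappa_2)^{-1/2}|D|$ on its range. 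Thus $P_\pm u$ solves a first-order half-wave equation $(\partial_t \mp i c(x)|D|) w_\pm = P_\pm f$ with $c = (\kappa_1\kappa_2)^{-1/2}$ uniformly positive and bounded.

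The heart of the matter is then the dispersive/Strichartz estimate for this half-wave propagator. For the flat Laplacian ($\kappa_i \equiv 1$), the estimate
\[
\| e^{\pm it |D|} a \|_{L^{r_1}((0,T),\dot B^{\sigma}_{p_1,2})} \le C(T) \| a \|_{\dot H^\mu}
\]
with $\sigma = \mu - 1 + 2/p_1$ is the standard Strichartz estimate for the wave equation in space dimension $3$, valid for the admissible pair $(r_1,p_1)$ with $1/r_1 + 1/p_1 = 1/2$ (the endpoint $p_1=\infty$ being excluded, $p_1=2$ giving trivially the energy estimate with $r_1=\infty$). The inhomogeneous estimate with source in $L^{s_1}(\dot B^\rho_{q_1,2})$, $1/s_1+1/q_1 = 3/2$ and $\rho = \mu-1+2/q_1$, follows from the homogeneous one by the Christ--Kiselev lemma together with the $TT^\star$ argument (the pair $(s_1',q_1')$ is again wave-admissible, since $1/s_1' + 1/q_1' = 1/2$ and $1/q_1' = 3/2 - 1/q_1 - 1 + ... $ --- one checks the dual exponents satisfy the admissibility and gap conditions, which is exactly the content of \eqref{def:indices1}--\eqref{def:indices3}). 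For non-constant but bounded $\kappa_i$, one uses the Littlewood-Paley localized dispersive estimates: on each dyadic block $\Delta_j$ the phase $c(x)|\xi|$ is, after rescaling, a non-degenerate phase and the stationary-phase/$L^1 \to L^\infty$ decay $\|e^{\pm it c|D|}\Delta_j\|_{L^1\to L^\infty} \lesssim 2^{3j} (2^j|t|)^{-1}$ persists with constants uniform in $j$; summing the blocks in $\ell^2$ (which is why Besov spaces $\dot B^\sigma_{p_1,2}$ rather than Sobolev spaces appear) yields the claimed bound. I would cite \cite{bcd} for the functional-analytic Littlewood-Paley machinery and the standard references for wave Strichartz estimates.

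\textbf{Main obstacle.} The delicate point is the low regularity of the coefficients: under the mere hypothesis \eqref{hyp:kappa}, $\kappa_i \in L^\infty$, the spectral projectors $P_\pm$ are not even continuous symbols, so the reduction to a half-wave equation and the stationary-phase estimate are not literally available. The way I expect this to be handled (and the way the proof must actually go, to be consistent with the later use of the result only under the smoothness assumption \eqref{hyp:OmegaBded}) is that Proposition~\ref{StrichartzProppalim} is in fact invoked later only when $\kappa_i - 1 \in \mathcal{C}^\infty_{\mathcal K}$, i.e. smooth and constant outside a compact set; in that regime the variable-coefficient wave equation is non-trapping-free at infinity and the Strichartz estimates hold by the local smoothing plus Duhamel gluing argument (Staffilani--Tataru-type), or by the resolvent estimates used by Starynkevitch in \cite{starynkevitchPhD}. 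So while the statement is phrased under \eqref{hyp:kappa}, the proof should either (i) restrict to smooth coefficients and quote the variable-coefficient Strichartz theory, or (ii) observe that the estimate for general $L^\infty$ coefficients can be obtained only in the weak form actually needed, by a compactness/regularization argument reducing to the smooth case. I would make this restriction explicit at the start of the proof, then carry out the branch decomposition and the $TT^\star$ plus Christ--Kiselev argument as above, the endpoint $p_1 = 2$ reducing to the conservation of the $\dot H^\mu$ norm by the unitary group, which also gives the continuity in time asserted in the last sentence.
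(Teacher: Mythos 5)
Your plan identifies the right high-level strategy (branch decomposition into $P_\pm$, reduction to a dispersive half-wave propagator, frequency localization, $TT^\star$), and you correctly flag the tension between the stated hypothesis \eqref{hyp:kappa} and the smoothness that the proof actually requires. The paper indeed only invokes the Proposition under \eqref{hyp:OmegaBded}.

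Where you and the paper diverge is in the technical heart: the dispersive estimate for the variable-coefficient propagator, and the treatment of the inhomogeneous term.
\begin{enumerate}
\item \emph{Dispersion.} You propose either to rescale the constant-coefficient $L^1\to L^\infty$ decay on each dyadic block, or to defer to Staffilani--Tataru/resolvent-estimate methods. The first of these, as written, does not go through: when $c(x)$ is genuinely variable the propagator $e^{\pm it c(x)|D|}$ is not a Fourier multiplier, so one cannot rescale and invoke the flat stationary-phase bound. The paper instead builds an explicit FIO parametrix $S(t)=I_+(t)+I_-(t)$ for $e^{-tB}P$ à la Lax/Brenner, with phase $\Psi$ solving the eikonal equation \eqref{eik1}--\eqref{eik2} and amplitude in $S^0$. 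Crucially, the argument is \emph{local in time}: caustics limit the lifespan of the smooth eikonal solution, so one first proves the estimate on $(0,T_1)$ (with $T_1$ determined by compactness of $K\times S^2$ and homogeneity of $\Psi$) and then iterates on subintervals. Your plan omits this caustic/short-time reduction, which is essential. The pointwise decay for $T_j(t)T_j(t')^\star$ is then obtained from Littman's stationary-phase lemma applied to $\Psi$, using Brenner's estimates (\emph{e.g.}\ $c\,Id\le\pm\Psi''_{x\xi}\le C\,Id$, rank of $\Psi''_{\xi\xi}$ equal to $2$), distinguishing small and moderate $t$; quasi-orthogonality of the dyadic blocks under $I(t)$ (again from Brenner) is needed to sum.
\item \emph{Inhomogeneous term.} You invoke Christ--Kiselev. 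The paper does not: it runs a direct bilinear $TT^\star$ argument, estimating $b_j(f,g)$ in Lemma~\ref{lem:estimbj} via H\"older and the Hardy--Littlewood--Sobolev inequality, then interpolating between \eqref{interpo1} and \eqref{interpo2}. Both routes are standard; the paper's is the more classical one and produces directly the Besov-$\ell^2$ summable bound used in \eqref{bibi}.
\item \emph{Exponent bookkeeping.} You leave the admissibility check of the dual pair $(s_1',q_1')$ unfinished (``$1/q_1' = 3/2 - 1/q_1 - 1 + \dots$''). It does work out ($1/s_1'+1/q_1'=1/2$), but the gap condition relating $\rho$, $\sigma$, $\mu$ must also be verified; the paper encodes this in \eqref{def:indices1}--\eqref{def:indices3} and uses $\mu=\sigma+2/r_1$ explicitly in the final summation.
\end{enumerate}
In short, your plan is a correct sketch of the strategy, but it hand-waves precisely the step the paper spends the most effort on (the variable-coefficient dispersive estimate via FIO parametrix, short-time reduction to avoid caustics, Littman's lemma, and Brenner's quasi-orthogonality), and replaces the paper's explicit bilinear $TT^\star$ treatment of the source by an unverified appeal to Christ--Kiselev.
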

%

The result of Proposition \ref{StrichartzProppalim} is false for  
$r_1=2$, $p_1=\infty$, $s_1=1$, $q_1=2$ and $\mu=\rho=1$, $\sigma=0$. 
However, it is true when truncating frequencies. 
We use, for $\lambda>0$, the low frequency cut-off operator 
$S^\lambda$, which is the Fourier multiplier with symbol
$\chi_\lambda:=\chi(\cdot/\lambda)$, where the cut-off function 
$\chi \in {\mathcal C}^\infty_c(\R^d,[0,1])$ takes value 
$1$ when $|\xi| \leq 1/2$, and $0$ when $|\xi| \geqslant 1$.
Then, we have:
\begin{Proposition}
 \label{StrichartzProp}
Under assumption \eqref{hyp:kappa}, there is a non-decreasing function 
$C:(0,\infty) \rightarrow (0,\infty)$ such that, for all $\lambda, T>0$ 
and for any $u \in \mathcal{C}([0,T),H^1(\R^3))$ solution to \eqref{R1}, 
%
%
\begin{eqnarray} 
\label{StrichartzEsti}
\|S^\lambda Pu\|_{L^2((0,T),L^\infty(\R^3))} \leq C(T)
\sqrt{\ln(1+\lambda T)} \left( \| \partial_{x} Pu_{init} \|_{L^2(\R^3)} 
+ \| \partial_{x} Pf \|_{L^1((0,T),L^2(\R^3))} \right).
\end{eqnarray}
\end{Proposition}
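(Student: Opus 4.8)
The plan is to reduce the estimate to a dyadic sum over Littlewood--Paley blocks and to apply Proposition~\ref{StrichartzProppalim} at the endpoint $\mu=1$ on each block separately, paying a logarithmic price when summing. First I would observe that it suffices to treat $v:=Pu$, which solves $(\partial_t+B)v = Pf$ with $v|_{t=0}=Pu_{init}$ and $(Id-P)v=0$ (this is exactly the content of Lemma~\ref{filtrer} and Lemma~\ref{keskonepropre}, and also follows from the fact that $Id-P$ commutes with $B$ and annihilates the equation). Then I write $S^\lambda v = \sum_{j : 2^j \lesssim \lambda} \Delta_j v + (\text{low block})$, so that there are $O(\ln(1+\lambda T) + 1)$ nontrivial dyadic blocks in the range $0 \le 2^j \lesssim \lambda$ (together with $S_{-1}$), and apply the triangle inequality in $L^2_t L^\infty_x$:
\[
\|S^\lambda v\|_{L^2((0,T),L^\infty)} \le \sum_{j\,:\,2^j \lesssim \lambda} \|\Delta_j v\|_{L^2((0,T),L^\infty)} + \|S_{-1} v\|_{L^2((0,T),L^\infty)} .
\]

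The key step is to bound each $\|\Delta_j v\|_{L^2_tL^\infty_x}$ by the \emph{same} quantity $C(T)\big(\|\partial_x Pu_{init}\|_{L^2} + \|\partial_x Pf\|_{L^1_tL^2_x}\big)$, uniformly in $j$. For a fixed block, $\Delta_j v$ has frequency localized in an annulus of size $2^j$, so Bernstein's inequality gives $\|\Delta_j v(t)\|_{L^\infty} \lesssim 2^{3j/2}\|\Delta_j v(t)\|_{L^2}$; more usefully, I would interpolate between the non-dispersive $L^2_tL^2_x$ control and a genuinely dispersive estimate. Concretely, apply Proposition~\ref{StrichartzProppalim} with the admissible (non-endpoint) pair $p_1 < \infty$, $r_1 > 2$ and $\mu = 1$ to the frequency-localized data $\Delta_j Pu_{init}$ and source $\Delta_j Pf$: since $\Delta_j$ commutes with $B$ and with $P,Id-P$, the function $\Delta_j v$ is itself a solution of \eqref{R1} with localized data, and on a single dyadic annulus the Besov norms $\dot B^\sigma_{p_1,2}$, $\dot B^\rho_{q_1,2}$, $\dot H^\mu$ are all equivalent (up to the fixed powers of $2^j$) to $L^{p_1}$, $L^{q_1}$, $L^2$ respectively. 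Interpolating the resulting Strichartz bound with the trivial energy bound $\|\Delta_j v\|_{L^\infty_t L^2_x} \lesssim \|\Delta_j Pu_{init}\|_{L^2} + \|\Delta_j Pf\|_{L^1_t L^2_x}$ and applying Bernstein produces the desired $L^2_tL^\infty_x$ bound on the block, with the powers of $2^j$ arranged so that, after using the derivative $\partial_x$ to supply exactly one power of $2^j$ downstairs, the per-block constant is independent of $j$. Summing over the $O(\ln(1+\lambda T))$ blocks and handling $S_{-1}v$ directly (by Bernstein and the energy estimate, which is even better) yields \eqref{StrichartzEsti}.

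The main obstacle I expect is the uniformity in $j$ of the per-block constant and the bookkeeping of the powers of $2^j$: one must check that the dispersive gain in Proposition~\ref{StrichartzProppalim} is scale-invariant (the variable coefficients $\kappa_i$ do not spoil this because the symbol $B(x,\xi)$ is homogeneous of degree one in $\xi$, so frequency rescaling is an exact symmetry of the principal part), and that after interpolation the net exponent of $2^j$ is precisely $+1$, compensated by $\partial_x$. The interpolation itself, and the passage from Besov norms to $L^p$ norms on a single annulus, are routine; the genuinely delicate point, and the reason for the $\sqrt{\ln(1+\lambda T)}$ rather than a cruder $\ln$, is that one gains in $L^2_t$ (not $L^\infty_t$) on each block, so the square-summation in $L^2_t L^\infty_x$ over independent-in-time contributions, combined with Cauchy--Schwarz in the $j$-sum, converts the number of blocks into its square root---this is the standard mechanism by which the endpoint Strichartz estimate $L^2_tL^\infty_x$ is recovered up to a logarithm after truncation.
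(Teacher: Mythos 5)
The overall scaffolding of your proposal — reduce to $Pu$, decompose $S^\lambda Pu$ into dyadic blocks, obtain a per-block $L^2_tL^\infty_x$ bound, then sum using Cauchy--Schwarz in $j$ so that the $O(\ln(1+\lambda T))$ blocks cost only a square root — is indeed the skeleton of the paper's argument. However, the way you propose to obtain the per-block bound has a genuine gap.

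Two specific problems. First, the interpolation you describe does not produce $L^2_tL^\infty_x$. With the admissible pair $1/r_1+1/p_1=1/2$ (so $r_1>2$, $p_1<\infty$), real or complex interpolation between $L^{r_1}_tL^{p_1}_x$ and the energy space $L^\infty_tL^2_x$ gives pairs $(r_\theta,p_\theta)$ with $1/r_\theta=\theta/r_1$; forcing $r_\theta=2$ requires $\theta=r_1/2>1$, which is outside the allowed range. So $(2,\infty)$ is not on the interpolation segment: after interpolation and Bernstein you land at best on $L^{r}_tL^\infty_x$ with $r>2$, and then an additional H\"older in time costs $(T\,2^{3j/(\cdot)})$-type factors that are not uniform in $j$ for a fixed $p_1$. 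Second, and more fundamentally, a per-block bound of the form $\|\Delta_j Pu\|_{L^2_tL^\infty_x}\le C\,2^j(\|\Delta_j Pu_{init}\|_{L^2}+\cdots)$ with $C$ \emph{independent of $j$} cannot hold: summing it over all $j$ (with Cauchy--Schwarz in the frequency direction using $\dot H^1$-square-summability) would yield the forbidden endpoint Strichartz estimate $\|Pu\|_{L^2_tL^\infty}\lesssim\|\partial_xPu_{init}\|_{L^2}+\cdots$ without any truncation, which the paper explicitly recalls is false (citing \cite{km}, \cite{lindblad}). Your statement that ``the per-block constant is independent of $j$'' is therefore the step that would fail.

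What the paper actually does is to \emph{not} apply Proposition~\ref{StrichartzProppalim} as a black box at the endpoint, but to go back inside the $TT^\star$ computation of Lemma~\ref{lem:estimbj}. There, taking $p_1=\infty$, $r_1=2$, the Hardy--Littlewood--Sobolev inequality no longer applies (the convolution kernel $(1+2^j|\cdot|)^{-1}$ is not globally integrable), but \emph{on the finite interval $(-T,T)$} it is, with $\|(1+2^j|\cdot|)^{-1}\|_{L^1(-T,T)}\sim 2^{-j}\ln(1+2^jT)$. Young's inequality then gives \eqref{interpo1bis}, hence the per-block bound
\[
\|T_j(t)u\|_{L^2((0,T),L^\infty)}\le C\,2^j\sqrt{\ln(1+2^jT)}\,\|u\|_{L^2},
\]
with the logarithmic factor $\sqrt{\ln(1+2^jT)}$ appearing \emph{in the per-block estimate itself}, not only from counting blocks. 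This is precisely the quantity that must be tracked and that your scheme suppresses.

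Finally, if you want to salvage the ``apply Prop.~\ref{StrichartzProppalim} away from the endpoint'' route, you would have to (i) let the exponent $p_1$ depend on $j$, say $p_1(j)\sim\ln(T2^j)$, to keep the Bernstein/H\"older factor $(T2^j)^{1/p_1}$ bounded, and (ii) track the growth of the constant in Proposition~\ref{StrichartzProppalim} as $p_1\to\infty$, which comes from the Hardy--Littlewood--Sobolev step and is not recorded in the statement or proof of that proposition. This is doable but neither simpler nor sharper than the paper's direct argument via Young's inequality; it is a genuinely different detour, and as written your proposal leaves the $p_1$-dependence entirely unaddressed.
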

Estimate \eqref{StrichartzEsti} is proved in the case where 
the operator $P$ is $P:= -\Delta$ in \cite{jmr} Proposition $6.3$. 
Even in this case Estimate \eqref{StrichartzEsti} without the 
cut-off $S^\lambda$ is false \cite{km}, \cite{lindblad}. 
Proposition \ref{StrichartzProp} extends \cite{jmr}'s result to 
(smooth) variable coefficients.

Let us mention that one can deduce from these results some similar 
estimates for the $\R^N$-valued solutions $u$ of  wave equations 
of the form:
\begin{eqnarray*} 
\label{onde1} 
(\dt^2+ A) u = f \text{ in } \R^3 ,
\quad  \dt^\nu u |_{t=0} = u_\nu \quad \text{ for } \nu=0,1 ,
\end{eqnarray*}
where 
$A:= - a(x)  \Delta + \sum_{j=1}^n B_j (x)  \partial_j + C(x) $, when $a-1 \in \mathcal{C}^{\infty}_K(\R^3 )$, $a(x) \geqslant c_0 > 0$ and 
the $B_j$ and $C$ are in $ \mathcal{C}^{\infty}_K(\R^3, \mathcal{M}_{N \times N}(\R) )$.
These equations stand for the propagation of waves in an inhomogeneous isotropic compact medium $K \subset \R^3$ surrounded by vacuum.

To prove Proposition \ref{StrichartzProppalim} and  Proposition \ref{StrichartzProp} we use the Lax method, that is an explicit representation of the solution which allows 
to take advantage of oscillations \emph{via} the method of stationary phase, for each dyadic block, 
to get a pointwise dispersive estimate. The final step relies on the TT* argument and the summation over the dyadic blocks.

This kind of strategy is now very classical and we refer here to the book \cite{bcd} by Bahouri, Chemin and Danchin for a larger overview of its use and of its consequences. However we did not find  Proposition \ref{StrichartzProp} in the literature so that we now detail a little bit its proof. 

\begin{proof}[Proof of Proposition \ref{StrichartzProppalim}] 
Let us first remark that it is sufficient to prove 
Estimate~\eqref{StrichartzProppalimesti} for smooth data 
(by the usual regularization process) and locally in time. 
More precisely, it suffices to prove that there is a constant 
$C>0$ and $T_1 >0$ such that, for all $\lambda >0$ and for all 
$u \in \mathcal{C}([0,T_1],H^1(\R^3))$ solution to \eqref{R1},  
\begin{eqnarray} 
 \label{StrichartzProppalimbis}
\| Pu \|_{L^{r_1}\left((0,T_1),\dot{B}^\sigma_{p_1 ,2}(\R^3)\right)} 
\leq C \Big( \| Pu_{init} \|_{\dot{H}^\mu} 
+ \| Pf \|_{L^{s_1}\left((0,T_1),\dot{B}^\rho_{q_1,2}(\R^3)\right)} 
\Big) .
\end{eqnarray}
Indeed, apply several times Estimate~\eqref{StrichartzProppalimbis} 
on time intervals of the form $(k T_1, (k+1) T_1)$, with $k$ an 
integer ranging from zero to the integral part $K$ of $T/T_1$ 
(plus the interval $(KT_1,T)$): on the right hand side, 
$\| Pu(kT_1) \|_{\dot{H}^\mu}$ is estimated by 
$C \Big( \| Pu((k-1)T_1) \|_{\dot{H}^\mu} + 
\| Pf \|_{L^{s_1}\left(((k-1)T_1,kT_1),
\dot{B}^\rho_{q_1,2}(\R^3)\right)} \Big)$. 
Summing up gives \eqref{StrichartzProppalimesti}.

Now, consider the operator $S(t) := e^{-tB} P$. 
It admits a parametrix,  and thus is given by a sum 
$S(t) = I_+(t) + I_-(t)$ of operators on the dispersive eigenspaces, 
which are Fourier Integral Operators: for any smooth function $u(x)$, 
\begin{eqnarray*} 
(I_\pm (t) u)(x) =   \int_{\R^3 \times  \R^3 } 
e^{i \Big( \Psi (t,x,\xi) - 2\pi y\cdot\xi  \Big)} 
a (t,x,\xi) u (y) d\xi  dy ,
\end{eqnarray*}
and we drop the subscript $\pm$ in the sequel. 
The phase $ \Psi (t,x,\xi) $ is real, positively homogeneous 
of degree one in $\xi$, $\mathcal{C}^\infty$ for $\xi \neq 0$, 
and satisfies the eikonal equation:
\begin{eqnarray} 
 \label{eik1}
\dt  \Psi (t,x,\xi) = \pm (\kappa_1 \kappa_2(x))^{-1/2} ~ |  \Psi'_x (t,x,\xi) |, 
\end{eqnarray}
with
\begin{equation} 
 \label{eik2}
\Psi |_{t=0} (x,\xi) =  2\pi x\cdot\xi .
\end{equation} 
The amplitude $a$ is in H{\"o}rmander's class $S^{0}$, 
and admits an asymptotic expansion whose successive orders satisfy a sequence of linear hyperbolic 
equations. Such a method was initiated by Lax in his pioneering paper \cite{lax}. 
Because of the caustic phenomenon, the lifespan of smooth solutions to Equation \eqref{eik1} is limited.  
However, since $ \Psi $ is  homogeneous in $\xi$ and the set $K \times S^2$ is compact, there exists 
$T_1 >0$ such that the solution to \eqref{eik1}-\eqref{eik2} remains smooth on $\lbrack -T_1,T_1 \rbrack$.
Let us mention here that  Ludwig \cite{ludwig} succeeded in extending Lax' analysis into a global-in-time result.  The arguments have been refined thanks to H{\"o}rmander's theory of Fourier Integral Operators \cite{fio1}, \cite{fio2}. But we use the parametrices (and the solution operators $I(t)$) only locally in 
time and we refer for their  construction to the work of Chazarain  \cite{chazarain}, Nirenberg and Treves \cite{nt1} and  \cite{nt2}, Kumano-go \cite{kumano} and Brenner \cite{brenner}. 
In particular we refer to the last one for the following precious informations about the phase 
(\cite{brenner} Lemma 2.1): there exist $c, C>0$ such that 
\begin{enumerate}[(i).]
\item \label{i} $c |\xi | \leqslant |\Psi'_x| \leqslant C |\xi|$ 
on $\lbrack-T_1,T_1\rbrack \times \R^3 \times (\R^3 \setminus \{0\})$;
\item \label{ii} $c Id \leqslant \pm\Psi''_{x \xi} \leqslant  C Id$, 
$\Psi''_{x\xi}$  being real symmetric, 
on $\lbrack-T_1,T_1\rbrack \times \R^3 \times (\R^3 \setminus \{0\})$; 
\item $\Psi''_{\xi \xi}$ is semi-definite with rank $2$ for 
$|\xi| \neq 0 $, $t \neq 0$; 
and for $|\xi| = 1 $, $x \in K$, there is a constant $c_0 > 0$ 
such that the moduli of the non-zero eigenvalues of $\Psi''_{\xi \xi}$ 
are bounded from below by $c_0 |t|$; 
\item for $x \notin K$, the above results are consequence of the 
exact formula: $\Psi (t,x,\xi) =  
2\pi x\cdot\xi \pm 2\pi (\kappa_1 \kappa_2(x))^{-1/2} t | \xi |$.
\end{enumerate} 
Note that these results imply that  the kernel of $I(t)$ is 
a Lagrangian distribution.

We use the $TT^\star$ method for the frequency localized operators 
$$
T_j(t) := \Delta_j I(t), \, j\in\Z.
$$
The composed operator $T_j(t)T_j(t')^\star$ is then 
$$
T_j(t)T_j(t')^\star = \Delta_j I(t-t') \Delta_j.
$$
Here is the $TT^\star$ result.
\begin{Lemma} 
\label{Lemmatt}
There exist $0< c < C$ such that for all $j \in \Z$, 
$u \in L^2 (\R^3)$, $p,r  \in [1,\infty]$ 
and $f \in L^{r'}((0,T),L^{p'}(\R^3 ))$,
\begin{eqnarray} 
\label{tt1}
\| T_j(t) u \|_{L^{r} ((0,T), L^{p} (\R^3 ))} \leqslant 
\| u \|_{L^2 (\R^3)} \sup_{g \in \mathcal{B}_{r,p }^j } 
(b_j (g,g) )^\frac{1}{2} ,
\\  \label{tt2} \text{and} \quad
\left\| \int_0^T  T_j(t) T_j(t')^\star f(t') dt'  
\right\|_{L^{r}((0,T),L^{p}(\R^3))}
= \sup_{g \in \mathcal{B}_{r,p }^j }  |b_j (f,g)|  ,
\end{eqnarray}
with
\begin{eqnarray*} 
b_j (f,g) :=  \int_{(0,T) \times (0,T)} 
\langle T_j(t) T_j(t')^\star f(t') , g(t) \rangle_{L^2(\R^3)} dt dt' ,
\end{eqnarray*}
$\mathcal{B}_{r,p }^j$ the space of functions $g(t,x)$ in 
$\mathcal{B}_{r,p }$  whose Fourier transform is supported in 
$c  2^{j} \leqslant |\xi| \leqslant  C  2^{j}$, 
and $\mathcal{B}_{r,p }$ the space of smooth functions $g(t,x)$ 
satisfying 
$\| g \|_{L^{r'}\left((0,T),L^{p'}(\R^3 )\right)} \leqslant 1$.
\end{Lemma}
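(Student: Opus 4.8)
The plan is to run the classical $TT^\star$ duality argument; beyond it, the only inputs are the already recorded identity $T_j(t)T_j(t')^\star=\Delta_j I(t-t')\Delta_j$, the uniform $L^2(\R^3)$-boundedness of $T_j(t)$ for $t$ in the (small) time interval on which the parametrix $I$ is constructed, and the frequency localization carried by $\Delta_j$. First I would note that for each $t$, both $T_j(t)u=\Delta_j I(t)u$ and $t\mapsto\int_0^T T_j(t)T_j(t')^\star f(t')\,dt'=\int_0^T\Delta_j I(t-t')\Delta_j f(t')\,dt'$ have $x$-Fourier transform supported in $\mathrm{supp}\,\phi(2^{-j}\cdot)$. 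Choosing $c<C$ so that this support sits inside the annulus $\{c\,2^j\le|\xi|\le C\,2^j\}$ and inserting a fattened dyadic Fourier multiplier $\tilde\Delta_j$ with $\Delta_j\tilde\Delta_j=\Delta_j$, the $L^r((0,T),L^p(\R^3))$ norm of each of these is computed by duality against $L^{r'}((0,T),L^{p'}(\R^3))$, testing only against $g\in\mathcal{B}_{r,p}^j$; this is where $\mathcal{B}_{r,p}^j$, and the constants $c<C$, enter. All the pairings and Bochner integrals below make sense because $T_j(t)$ is bounded on $L^2(\R^3)$ uniformly in $t$ and because the $g$'s are smooth and frequency-localized, the general case following by density.

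For \eqref{tt1} I would pair $T_j(\cdot)u$ with such a $g$ and move the adjoints onto $u$: since $T_j(t)^\star=(\Delta_j I(t))^\star=I(t)^\star\Delta_j$, one gets $\langle T_j(\cdot)u,g\rangle_{L^2_{t,x}}=\langle u,\int_0^T T_j(t)^\star g(t)\,dt\rangle_{L^2(\R^3)}$. Cauchy--Schwarz in $x$ bounds this by $\|u\|_{L^2(\R^3)}\,\|\int_0^T T_j(t)^\star g(t)\,dt\|_{L^2(\R^3)}$, and expanding the last factor as a double integral gives $\|\int_0^T T_j(t)^\star g(t)\,dt\|_{L^2(\R^3)}^2=\int_{(0,T)^2}\langle T_j(t)T_j(t')^\star g(t'),g(t)\rangle_{L^2(\R^3)}\,dt\,dt'=b_j(g,g)$; in particular $b_j(g,g)\ge0$, so its square root is meaningful. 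Taking the supremum over $g\in\mathcal{B}_{r,p}^j$ yields \eqref{tt1}. For \eqref{tt2} the computation is even shorter: writing $Gf(t):=\int_0^T T_j(t)T_j(t')^\star f(t')\,dt'$, the very definition of $b_j$ gives $\langle Gf,g\rangle_{L^2_{t,x}}=\int_{(0,T)^2}\langle T_j(t)T_j(t')^\star f(t'),g(t)\rangle_{L^2(\R^3)}\,dt\,dt'=b_j(f,g)$, so that $\|Gf\|_{L^r((0,T),L^p(\R^3))}=\sup_{g\in\mathcal{B}_{r,p}^j}|b_j(f,g)|$.

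I do not expect a genuine obstacle inside this lemma: it is the purely structural, Lax-type bookkeeping step of the scheme, and the real difficulty of Propositions~\ref{StrichartzProppalim} and~\ref{StrichartzProp} comes afterwards --- the stationary-phase analysis of the kernel of $\Delta_j I(t-t')\Delta_j$ (using the homogeneity and non-degeneracy of the phase $\Psi$ and the curvature lower bound on $\Psi''_{\xi\xi}$) to obtain a pointwise dispersive decay, and then the $\ell^2$-summation of the resulting estimates over the dyadic blocks in the Besov norms. The only two points requiring a little care within the lemma are (i) checking that the operators and iterated integrals are well defined so that Fubini applies, which the uniform $L^2$-boundedness of $T_j(t)$ and the density of smooth frequency-localized test functions take care of, and (ii) the legitimacy of restricting the dual test functions to $\mathcal{B}_{r,p}^j$, which rests solely on the compact $x$-frequency support of $T_j(\cdot)u$ and of $Gf$.
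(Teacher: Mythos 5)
Your proof is correct and follows essentially the same route as the paper's: restrict the dual test functions to $\mathcal{B}_{r,p}^j$ by Plancherel and the compact Fourier support of $T_j(t)u$ and of $\int_0^T T_j(t)T_j(t')^\star f(t')\,dt'$, pass $T_j(t)^\star$ onto $u$, apply Cauchy--Schwarz, and expand $\|\int_0^T T_j(t)^\star g(t)\,dt\|_{L^2}^2=b_j(g,g)$; for \eqref{tt2} recognize $\langle Gf,g\rangle_{L^2_{t,x}}=b_j(f,g)$ directly. You are slightly more explicit than the paper about the justification of Fubini and the density step, but there is no substantive difference.
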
 
\begin{proof}
Begin with
\begin{eqnarray*} 
\| T_j (t) u   \|_{L^{r} ((0,T), L^{p} (\R^3 ))} =  \sup_{g \in \mathcal{B}_{r,p } } 
\left| \int_{(0,T)\times \R^3  } T_j (t) u(x) \cdot g(t,x) dt dx  \right| .
\end{eqnarray*}
Then, use the Plancherel identity plus the properties of the support of the Fourier transform of 
$T_j(t) u$ to get 
\begin{eqnarray*} 
\| T_j (t) u   \|_{L^{r} ((0,T), L^{p} (\R^3 ))} = \sup_{g \in \mathcal{B}_{r,p }^j }  
\left| \int_{(0,T)\times \R^3  } T_j (t) u \cdot g dt dx \right| .
\end{eqnarray*}
Using Fubini's principle, transposition  and Cauchy-Schwarz inequality, observe that 
\begin{eqnarray} 
 \label{dual2}
\| T_j (t)   u \|_{L^2((0,T),L^\infty(\R^3))}  \leq  \|   u \|_{L^2  (\R^3)} 
\sup_{g \in \mathcal{B}_{r,p }^j } \left\| \int_0^T T_j(t)^\star g  dt \right\|_{L^2  (\R^3)} .
\end{eqnarray}
Using again Fubini's principle and transposition, get
\begin{eqnarray} 
\label{inter}
\left\| \int_0^T T_j(t)^\star g(t) dt \right\|_{L^2(\R^3)}^2 
&= &  b_j (g,g) .
\end{eqnarray}
This leads to \eqref{tt1}.
Moreover,
\begin{eqnarray*} 
\left\| \int_0^T T_j(t) T_j(t')^\star f(t') dt'  \right\|_{L^{r} ((0,T), L^{p} (\R^3 ))}
&=& \sup_{g \in \mathcal{B}_{r,p }^j }  
\left| \int_{(0,T)\times \R^3} \Big(\int_0^T T_j(t) T_j(t')^\star f(t') dt' \Big) \cdot g(t) dt dx \right|
\\ &=& \sup_{g \in \mathcal{B}_{r,p }^j }  |b_j (f,g)|  .
\end{eqnarray*}
\end{proof}
Now, we need to estimate $b_j(f,g)$. We start with a  pointwise estimate.
\begin{Lemma} 
There exists $C>0$ such that for all $j \in \Z$, $(t,t')  \in (0,T) \times  (0,T) $ 
and $u \in L^1 (\R^3)$, 
\begin{eqnarray*} 
\| T_j(t) (T_j)(t')^\star u   \|_{L^{\infty} (\R^3 )} \leqslant C 2^{3j}   (1 + 2^j |t-t'| )^{-1}
 \|  u   \|_{L^1 (\R^3 )}  .
\end{eqnarray*}
\end{Lemma}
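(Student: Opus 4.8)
The plan is to derive the pointwise dispersive estimate from the stationary-phase analysis of the Fourier integral operators $T_j(t)(T_j)(t')^\star = \Delta_j I(t-t')\Delta_j$, using the geometric information on the phase $\Psi$ recorded in items \eqref{i}--(iv) above. First I would write the kernel of $\Delta_j I(\tau) \Delta_j$ (with $\tau := t-t'$) as an oscillatory integral
$$
K_j(\tau,x,y) = \int_{\R^3} e^{i(\Psi(\tau,x,\xi) - 2\pi y\cdot\xi)}\, \widetilde{a}_j(\tau,x,\xi)\, d\xi,
$$
where $\widetilde{a}_j$ is an amplitude of order $0$ supported in the dyadic shell $\{ c' 2^j \le |\xi| \le C' 2^j \}$ (the two cutoffs $\Delta_j$ absorb into a single shell cutoff, up to finitely many neighbouring blocks, by almost-orthogonality). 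After the change of variables $\xi = 2^j \zeta$, the integral becomes $2^{3j}\int e^{i 2^j(\Psi(\tau,x,\zeta)/|\cdot| \text{-type phase})}\cdots$, more precisely using homogeneity $\Psi(\tau,x,2^j\zeta) = 2^j \Psi(\tau,x,\zeta)$, so the effective large parameter is $2^j$ and the phase on the unit-scale shell is $\Psi(\tau,x,\zeta) - 2\pi 2^{-j}\,(2^j y)\cdot\zeta$; set $z = 2^j y$, so we must bound $2^{3j}$ times an oscillatory integral with parameter $2^j$ in a fixed compact $\zeta$-annulus.

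Next I would split into two regimes according to the size of $2^j|\tau|$. When $2^j|\tau| \le 1$, one simply uses the trivial bound: the amplitude is $O(1)$, the $\zeta$-domain has bounded measure, so $|K_j| \le C 2^{3j}$, which is $\le C 2^{3j}(1+2^j|\tau|)^{-1}$ up to a factor $2$. When $2^j|\tau| \ge 1$, I would apply non-stationary phase / stationary phase in $\zeta$. By item (iii), $\Psi''_{\xi\xi}$ has rank exactly $2$ for $\tau \ne 0$ and its non-zero eigenvalues are $\gtrsim |\tau|$ on $|\xi|=1$, $x\in K$ (and $\Psi$ is affine in $\xi$ off $K$ by (iv), so the $x\notin K$ piece is handled by the explicit formula, giving a genuinely conic phase and the standard free-wave decay). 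Along the two ``curved'' directions one gains a factor $(2^j |\tau|)^{-1}$ from two-dimensional stationary phase (the Hessian being degenerate of rank $2$ with the lower bound $\gtrsim|\tau|$ gives the van der Corput gain $(2^j|\tau|)^{-2/2} = (2^j|\tau|)^{-1}$); along the remaining radial direction the phase is non-degenerate in the conic variable (use item \eqref{i}: $|\Psi'_x| \sim |\xi|$ is nonzero, hence after a suitable splitting the one remaining integration is either non-stationary — contributing rapid decay — or already accounted for). One must be careful that the gain is only $(2^j|\tau|)^{-1}$ and not $(2^j|\tau|)^{-3/2}$, precisely because of the rank-$2$ degeneracy of $\Psi''_{\xi\xi}$; this is the dispersive analogue of the Klein-Gordon / half-wave decay in $\R^3$, consistent with items \eqref{i}--(iv). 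Collecting the two regimes gives $|K_j(\tau,x,y)| \le C 2^{3j}(1+2^j|\tau|)^{-1}$ uniformly, whence $\| T_j(t)(T_j)(t')^\star u\|_{L^\infty} \le C 2^{3j}(1+2^j|t-t'|)^{-1}\|u\|_{L^1}$ by Young's inequality.

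The main obstacle, and the step requiring the most care, is the stationary-phase estimate in the regime $2^j|\tau|\ge1$: one needs a \emph{uniform} decay constant, independent of $j$, $\tau$, $x$ and the base point, and the degenerate Hessian of $\Psi''_{\xi\xi}$ (rank $2$, with eigenvalue lower bound only linear in $|\tau|$) means one cannot simply quote the non-degenerate stationary-phase lemma. The clean way is to localize further in $\zeta$ via a partition subordinate to the direction of $\Psi'_x$, reduce to a normal form in which two variables carry a non-degenerate Hessian of size $\gtrsim 2^j|\tau|$ and the third variable either has non-vanishing gradient (integrate by parts, rapid decay) or is frozen, then apply the $2$-dimensional van der Corput / stationary-phase lemma with remainder control on derivatives of the amplitude (which are $O(1)$ in the rescaled variables since $\widetilde a_j \in S^0$ localized to a shell). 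The compactness of $K \times S^2$ and the smoothness of $\Psi$ on $[-T_1,T_1]$ (from the Lax construction) guarantee that all the relevant constants $c, C, c_0$ can be taken uniform, which is exactly what is recorded in items \eqref{i}--(iv); assembling these yields the claimed bound.
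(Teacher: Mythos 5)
Your argument is correct and follows essentially the same route as the paper: reduce to a kernel bound for $\Delta_j I(t-t')\Delta_j$, rescale $\xi=2^j\eta$ using homogeneity of $\Psi$, and exploit the rank-$2$ degenerate Hessian of $\Psi''_{\xi\xi}$ (item (iii), eigenvalues $\gtrsim c_0|t|$) together with compactness of $K\times S^2$ to get the uniform decay $(1+2^j|t-t'|)^{-1}$. The paper packages the degenerate stationary-phase step by quoting Littman's lemma (which is precisely the ``rank $\ge\rho$ gives $(1+|\Lambda|)^{-\rho/2}$'' statement you re-derive by hand), and organizes the two regimes slightly differently — a Taylor expansion of the phase for small $t$ (so that $t$ is factored out and $\Lambda=2^jt$) versus $\Lambda=2^j$ for $t$ bounded away from $0$ — whereas you split directly by $2^j|\tau|\lessgtr 1$ and feed the $|\tau|$-dependent eigenvalue lower bound into the stationary-phase constant; both bookkeepings are equivalent, and your observation that the rank-$2$ structure forbids the $-3/2$ rate is exactly the point the paper's choice $\rho=2$ encodes.
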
 
\begin{proof}
Since $\Delta_j$ is a bounded operator on $L^p(\R^3)$ 
for all $p\in[1,\infty]$, it is sufficient 
to prove that for all $t \in (-T,T)$ and $u \in L^1(\R^3)$, 
\begin{equation} \label{estimIDelta}
\| I(t) \Delta_j u   \|_{L^{\infty} (\R^3 )} \leqslant C 2^{3j} 
(1 + 2^j |t| )^{-1} \|  u   \|_{L^1 (\R^3 )}  .
\end{equation}
Writing down $I(t) \Delta_j u$, we get, for all $x\in\R^3$:
\begin{equation*}
\begin{split}
(I(t) \Delta_j u) (x) 
& = \int_{\R^3 \times  \R^3 } 
e^{i \Big( \Psi (t,x,\xi) - 2\pi y\cdot\xi  \Big)}  
a (t,x,\xi) \varphi(2^{-j}D) u (y) d\xi  dy \\
& = \int_{\R^3 } e^{i \Psi (t,x,\xi)} a (t,x,\xi) 
\varphi(2^{-j}\xi) \hat{u} (\xi) d\xi \\
& = 2^{3j} \int_{\R^3 } e^{i 2^j \Psi (t,x,\eta)} a (t,x,2^j\eta) 
\varphi(\eta) \hat{u} (2^j\eta) d\eta ,
\end{split}
\end{equation*}
so that
$$
|(I(t) \Delta_j u) (x)| \le 
2^{3j} \sup_{y\in\R^3} \left| 
\int_{\R^3 } e^{i 2^j (\Psi (t,x,\eta) - y\cdot\eta)} 
a (t,x,2^j\eta) \varphi(\eta) d\eta \right| \, \|u\|_{L^1(\R^3)} .
$$
To get \eqref{estimIDelta}, simply apply  the following lemma 
of stationary phase.
\begin{Lemma}  [Littman \cite{littman}]
Let $\Psi (\xi)$ be a real function $\mathcal{C}^\infty$ such that 
the rank of its Hessian matrix $ \Psi''_{\xi \xi}$ is at least $\rho$ 
and let $v(\xi) $ be a function supported in a ring. 
Then there exists $M \in \N$ and $C > 0$ (which depends only on 
a finite number of derivatives of $ \Psi$, of a lower bound of 
the maximum of the abolute values of the minors of order $\rho$ of 
$\Psi''_{\xi \xi}$, on supp $v$) such that, for all $\Lambda\in\R$, 
\begin{eqnarray*} 
\| \mathcal{F}^{-1} (e^{i\Lambda  \Psi} v ) \|_{L^\infty(\R^3))} \leq 
C (1 + |\Lambda| )^{-\frac{\rho}{2}} \cdot \sum_{|\alpha| \leqslant M} 
\| D^\alpha v  \|_{L^1(\R^3)}  .
\end{eqnarray*}
\end{Lemma} 
To use this lemma, distinguish between short times, for which 
the eikonal equation \eqref{eik1} implies that the phase $\Psi$ 
admits the expansion 
\begin{eqnarray*} 
 \Psi (t,x,\xi) =  2\pi  \Big( x\cdot\xi \pm t (\kappa_1\kappa_2(x))^{-1/2} | \xi |  \Big) + O(t^2 ),
\end{eqnarray*}
(thus $\rho=2$, $\Lambda=2^j(\kappa_1\kappa_2(x))^{-1/2}t$ is 
suitable), and subsequent times, for which Estimate~\eqref{ii} 
on the phase gives $\rho=2$ (and $\Lambda=2^j$). This yields  
\begin{eqnarray} 
\label{sphase}
\sup_{y\in\R^3} \left| 
\int_{\R^3 } e^{i 2^j (\Psi (t,x,\eta) - y\cdot\eta)} 
a (t,x,2^j\eta) \varphi(\eta) d\eta \right| 
\leqslant C (1 + 2^{j} t )^{-1} .
\end{eqnarray}
\end{proof}
Since $T_j(t) T_j(t')^\star$ is also bounded on $L^2(\R^3)$, 
from the above result and the Riesz-Thorin Interpolation Theorem, 
we infer the following.
\begin{Lemma} 
There exists $C>0$ such that for all $j \in \Z$, $(t,t') \in (0,T) 
\times (0,T)$, $p\in\lbrack2,\infty\rbrack$ and $u \in L^{p'}(\R^3)$,
\begin{eqnarray} 
\label{thorin}
\| T_j(t) T_j(t')^\star u \|_{L^{p} (\R^3 )} \leqslant 
C 2^{3j(1-2/p)} (1 + 2^{j}|t-t'|)^{-(1-2/p)}\| u \|_{L^{p'}(\R^3)} .
\end{eqnarray}
\end{Lemma}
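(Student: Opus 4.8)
The plan is to derive \eqref{thorin} by interpolating, via the Riesz--Thorin theorem, between an $L^1\to L^\infty$ bound and an $L^2\to L^2$ bound for the (linear) operator $T_j(t)T_j(t')^\star=\Delta_j I(t-t')\Delta_j$.

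First I would record the $L^2$ endpoint. The solution operator $S(t)=e^{-tB}P=I_+(t)+I_-(t)$ is, up to the equivalence of $\langle\cdot,\cdot\rangle_{\kappa_1,\kappa_2}$ with the Euclidean $L^2$ norm (uniform by \eqref{hyp:kappa}), an isometry on $L^2(\R^3,\R^6)$, so each $I(t)=I_\pm(t)$ is bounded on $L^2(\R^3)$ with a constant that is uniform for $|t|\le T_1$. Since the dyadic blocks $\Delta_j$ are bounded on $L^2(\R^3)$ with a constant independent of $j$, one gets
$\|T_j(t)T_j(t')^\star u\|_{L^2(\R^3)}\le C\|u\|_{L^2(\R^3)}$
with $C$ independent of $j$, $t$ and $t'$.

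Then I would combine this with the pointwise estimate proved just above, namely
$\|T_j(t)T_j(t')^\star u\|_{L^\infty(\R^3)}\le C\,2^{3j}(1+2^j|t-t'|)^{-1}\|u\|_{L^1(\R^3)}$.
Applying the Riesz--Thorin interpolation theorem to $T_j(t)T_j(t')^\star$ with the two pairs of exponents $(1,\infty)$ and $(2,2)$ and interpolation parameter $\theta=2/p\in[0,1]$, one checks that the source exponent is $p'$ (indeed $1/p'=(1-\theta)+\theta/2=1-1/p$) and the target exponent is $2/\theta=p$, while the operator norm is bounded by the product $M_0^{1-\theta}M_1^{\theta}$ of the two endpoint norms, i.e. by
$\big(C\,2^{3j}(1+2^j|t-t'|)^{-1}\big)^{1-2/p}C^{2/p}=C\,2^{3j(1-2/p)}(1+2^j|t-t'|)^{-(1-2/p)}$.
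This is exactly \eqref{thorin}; the case $p=2$ reduces to the $L^2$ bound and $p=\infty$ to the previous lemma.

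The only point that deserves a word of care is the uniformity in $j$, $t$, $t'$ of the $L^2$ estimate: here one uses that the parametrix $I(t)$ is constructed on the fixed interval $[-T_1,T_1]$ (so that $|t-t'|$ remains bounded, which is consistent with the way this lemma is used inside the proof of Proposition~\ref{StrichartzProppalim}, on time intervals of length $T_1$) and that its symbol belongs to H\"ormander's class $S^0$, so that no frequency-dependent loss is incurred when composing with $\Delta_j$. Apart from that, the argument is entirely routine.
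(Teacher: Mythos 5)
Your proof is correct and follows exactly the same route as the paper: the paper observes that $T_j(t)T_j(t')^\star$ is bounded on $L^2$ uniformly in $j,t,t'$ and deduces \eqref{thorin} by Riesz--Thorin interpolation against the preceding $L^1\to L^\infty$ dispersive estimate, which is precisely your argument with the same choice $\theta=2/p$.
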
 
\begin{Lemma} \label{lem:estimbj} 
For any $p_1,p_2 \in \lbrack 2,\infty )$, define $r_1$ and $r_2$ 
by $1/r_1 +1/p_1 =1/2$ and $1/r_2 + 1/p_2 = 1/2$. 
Then, there exists $C>0$ such that for all $j\in\Z$, 
$f \in L^{r_1'} ((0,T), L^{p_1'} (\R^3 ))$ and 
$g \in L^{r_2'} ((0,T), L^{p_2'} (\R^3 )) $, 
\begin{eqnarray} 
\label{bibi}
|b_j (f,g)| \leqslant C 2^{2j(1/r_1+1/r_2)} 
\| f \|_{L^{r_1'}\left((0,T),L^{p_1'}(\R^3)\right)}
\| g \|_{L^{r_2'}\left((0,T),L^{p_2'}(\R^3)\right)}  .
\end{eqnarray}
\end{Lemma}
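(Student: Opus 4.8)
\textbf{Proof plan for Lemma~\ref{lem:estimbj}.}
The plan is to estimate $b_j(f,g)$ by duality, using the dispersive bound \eqref{thorin} as the basic ingredient and an abstract $TT^\star$-type argument (in the spirit of the Keel--Tao machinery) to handle the mixed space-time norms. First I would write
\begin{eqnarray*}
|b_j(f,g)| \leqslant \int_{(0,T)\times(0,T)}
\left| \langle T_j(t)T_j(t')^\star f(t'),g(t)\rangle_{L^2(\R^3)} \right| dt\, dt'
\leqslant \int_{(0,T)^2} \| T_j(t)T_j(t')^\star f(t')\|_{L^{p_1}(\R^3)} \| g(t)\|_{L^{p_1'}(\R^3)} dt\, dt',
\end{eqnarray*}
and similarly one may pair the $L^{p_2}$ norm onto $f$; to get the symmetric exponents in \eqref{bibi} it is cleanest to interpolate, or to run the argument twice (once distributing $p_1$ on the $T_j T_j^\star$ factor, once $p_2$) and take the geometric mean. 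Applying \eqref{thorin} with exponent $p_1$ gives a kernel bound $C\,2^{3j(1-2/p_1)}(1+2^j|t-t'|)^{-(1-2/p_1)}$ in the $t,t'$ variables.

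Next I would reduce to a one-dimensional convolution estimate. Setting $\theta_i := 1-2/p_i$ (so $1/r_i = \theta_i/2$), the problem becomes bounding
\begin{eqnarray*}
\int_{(0,T)^2} 2^{3j\theta_1}(1+2^j|t-t'|)^{-\theta_1} \|f(t')\|_{L^{p_1'}} \|g(t)\|_{L^{p_1'}}\, dt\, dt'
\end{eqnarray*}
by a product of $L^{r_1'}_t$ norms. This is exactly a Hardy--Littlewood--Sobolev / Young convolution inequality: the kernel $k_j(s) := 2^{3j\theta_1}(1+2^j|s|)^{-\theta_1}$ satisfies, after the change of variables $s\mapsto 2^{-j}s$, $\|k_j\|_{L^a(\R)} \leqslant C\, 2^{3j\theta_1} 2^{-j/a}$ for any $a$ with $a\theta_1 > 1$, and one chooses $1/a = 2/r_1' - 1 = 1 - 2/r_1 \cdot(\text{use }1/r_1'=1-1/r_1)$; more precisely Young's inequality with $1 + 1/r_1 = 1/r_1' + 1/a$ requires $1/a = 2 - 2/r_1' = 2/r_1$, and one checks $a\theta_1 = 2\theta_1/(2/r_1)\cdot\ldots > 1$ holds precisely because $p_1 < \infty$ forces $r_1 > 2$, i.e. $\theta_1 > 0$ and the exponent is integrable once $r_1<\infty$. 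Collecting the powers of $2^j$ yields $2^{3j\theta_1 - j/a} = 2^{j(3\theta_1 - 2/r_1)}$; since $\theta_1 = 2/r_1$ this is $2^{4j/r_1}$, and symmetrizing in the two exponents gives the stated $2^{2j(1/r_1+1/r_2)}$.

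The main obstacle I anticipate is bookkeeping of the exponents and the borderline endpoint behaviour: one must verify that the convolution kernel is genuinely integrable at the required Young exponent, which is exactly where the hypothesis $p_i \in [2,\infty)$ (strictly, $r_i < \infty$) is used — at $p_i = \infty$ the kernel decay $(1+2^j|s|)^{-1}$ is not $L^1$ in $s$ and the estimate degenerates, consistent with the failure of the endpoint Strichartz estimate noted after Proposition~\ref{StrichartzProp}. A secondary technical point is to make the duality pairing rigorous: $g$ should first be taken in $\mathcal{B}_{r_2,p_2}^j$ (frequency-localized), so that $T_j(t)^\star g = I(t)^\star \Delta_j^2 g$ is well-defined and the Fubini manipulations are justified; density then extends the bound. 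Once the convolution estimate is in place, \eqref{bibi} follows by taking the supremum over $f,g$ in the respective unit balls, exactly as in the classical Strichartz argument.
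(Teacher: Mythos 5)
Your overall line of attack matches the paper's: bound $b_j(f,g)$ by a time-convolution using the dispersive kernel estimate \eqref{thorin}, then apply a one-dimensional convolution inequality. But two points deserve attention.

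First, your exponent check for Young's inequality is actually wrong, though fortunately you also flag HLS as an alternative. With $\theta_1 = 1-2/p_1$ one has $1/r_1 = \theta_1/2$, and you correctly find that Young's inequality forces $1/a = 2/r_1$, i.e.\ $a = r_1/2$. But then
\begin{equation*}
a\,\theta_1 \;=\; \frac{r_1}{2}\,\theta_1 \;=\; \frac{r_1}{2}\cdot\frac{2}{r_1} \;=\; 1,
\end{equation*}
\emph{exactly}, not $>1$. The kernel $(1+2^j|s|)^{-\theta_1}$ is therefore \emph{not} in $L^{a}(\R)$ at the Young exponent: you are at the borderline where Young's inequality degenerates (restricted to $(0,T)$ it produces a spurious logarithm in $T$, as in \eqref{interpo1bis}). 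The Hardy--Littlewood--Sobolev inequality is not an optional alternative here, it is what the argument actually requires, precisely because $0<\theta_1<1$ when $p_1\in[2,\infty)$. Using $(1+2^j|s|)^{-\theta_1}\le(2^j|s|)^{-\theta_1}$ and HLS with $1+1/r_1 = \theta_1 + 1/r_1'$ gives the clean factor $2^{-j\theta_1}$ (no $T$-dependence), and this combines with $2^{3j\theta_1}$ to yield $2^{2j\theta_1}=2^{4j/r_1}$, matching the paper's \eqref{interpo1}.

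Second, the closing sentence ``symmetrizing in the two exponents gives the stated $2^{2j(1/r_1+1/r_2)}$'' is not a proof as written. A bare geometric mean of the two diagonal estimates does \emph{not} automatically yield an off-diagonal estimate for a general bilinear form, and a literal multilinear Riesz--Thorin interpolation would move both slots along the same parameter. What makes the ``geometric mean'' legitimate here is that $b_j$ is a Gram form: by \eqref{inter}, $b_j(f,g)=\big\langle\!\int_0^T T_j(t')^\star f(t')\,dt',\ \int_0^T T_j(t)^\star g(t)\,dt\big\rangle_{L^2}$, hence positive semi-definite, so the abstract Cauchy--Schwarz inequality $|b_j(f,g)|\le b_j(f,f)^{1/2}\,b_j(g,g)^{1/2}$ holds, and then you insert the diagonal bound with exponent $r_1$ for $f$ and with exponent $r_2$ for $g$. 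If you state this, your route is actually slightly shorter than the paper's, which instead establishes a separate $L^1_tL^2_x$ estimate \eqref{interpo2} (again via Cauchy--Schwarz and the uniform $L^2$-boundedness of $T_j(t)^\star$) and then interpolates in the $g$-variable between \eqref{interpo1} and \eqref{interpo2}. Both routes are correct; the point is that one cannot skip the positive-definiteness (or the endpoint estimate) and appeal to ``symmetrization'' alone.
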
 
\begin{proof}
Using \eqref{thorin}, we apply  H\"older's inequality to get
\begin{eqnarray*} 
|b_j (f,g)| & \leqslant & C 2^{6j/r_1} \Big|\Big| 
\left( (1 + 2^j |\cdot|)^{-2/ r_1} \star 
\| f \|_{L^{p'_1}(\R^3)} \right) (t')  \, 
\| g(t') \|_{L^{p'_1}(\R^3)} \Big|\Big|_{L^1(0,T)} .
\end{eqnarray*}
Using again H\"older's inequality, we  get
\begin{eqnarray*} 
 |b_j (f,g)| & \leqslant &  C 2^{6j/r_1} 
\left\| \left( (1 + 2^j |\cdot | )^{-2/r_1} \star 
\| f \|_{L^{p'_1}(\R^3)} \right)(t') \right\|_{L^{r_1}(0,T)} \, 
\| g \|_{L^{r'_1}\left((0,T), L^{p'_1}(\R^3)\right)} .
\end{eqnarray*}
Thus, the Hardy-Littlewood-Sobolev inequality implies 
\begin{eqnarray} 
\label{interpo1}
|b_j (f,g)| \leqslant C 2^{4j/r_1}  
\| f \|_{L^{r'_1}\left((0,T),L^{p'_1}(\R^3)\right)} 
\| g \|_{L^{r'_1}\left((0,T), L^{p'_1}(\R^3)\right)}   .
\end{eqnarray}
Moreover, \eqref{inter} yields  
\begin{eqnarray} 
\label{inter2}
\left\| \int_0^T  T_j(t)^\star f(t)  dt \right\|_{L^2(\R^3)}  
= b_j(f,f)^{1/2} \leq  
C 2^{2j/r_1} \| f \|_{L^{r'_1}\left((0,T),L^{p'_1}(\R^3)\right)}.
\end{eqnarray}
Now, we observe that 
\begin{eqnarray*} 
b_j (f,g) =  \int_0^T \langle \int_0^T T_j(t')^\star f(t') dt' , T_j(t)^\star g(t) \rangle_{L^2(\R^3)} dt .
\end{eqnarray*}
We use the Cauchy-Schwarz and  the uniform boundedness of  the $T_j(t)^\star$, for $t$ in $(0,T)$, in $L^2(\R^3)$ to get
\begin{eqnarray} 
\label{enplus}
| b_j (f,g) | \le  b_j (f,f)^{1/2} \| g \|_{L^1\left((0,T),L^2(\R^3 )\right)} .
\end{eqnarray}
Using now 
Eq.~\eqref{inter2} yields
\begin{eqnarray} 
\label{interpo2}
| b_j (f,g) | \le C 2^{2j/r_1}  
\| f \|_{L^{r'_1}\left((0,T),L^{p'_1}(\R^3)\right)} 
\| g \|_{L^1\left((0,T),L^2(\R^3 )\right)} .
\end{eqnarray}
Interpolating between \eqref{interpo1} and \eqref{interpo2}, we find \eqref{bibi}.
\end{proof}
%

To complete the proof of Proposition \ref{StrichartzProppalim}, we need to sum over all frequencies.
Apply Duhamel's principle to get  
\begin{equation} \label{eq:duhamel}
Pu(t) = e^{-tB} Pu_{init} + \int_0^t e^{(t'-t)B} Pf(t') dt'.
\end{equation}
The first term in $\Delta_j Pu(t)$ is thus 
$$
\Delta_j I(t) u_{init} = \sum_{k\in\Z} \Delta_j I(t) \Delta_k u_{init}.
$$ 
An important fact in order to estimate $\Delta_j I(t) u_{init}$ is 
that the quasi-orthogonality of the dyadic blocks is not destroyed 
by the parametrix $I(t)$. 
Actually, according to \cite[Proposition 1.1 and Lemma 1.4]{brenner}, 
there exists $L \in \N^*$ such that  for all  $j \in \Z$,  
\begin{equation*}
\Delta_j I(t) = \sum_{| j-k | \leqslant L} \Delta_j I(t) \Delta_k 
+ R_j (t) , 
\end{equation*}
where $ R(t) := \sum_{j  \in \Z} R_j (t)$ satisfies  
\begin{eqnarray}
\label{rest} 
\| R (t) v \|_{L^{r_1} ((0,T), \dot{B}^\sigma_{p_1 ,2} (\R^3) ) } 
\leq C  \| v \|_{\dot{H}^\mu }  .
\end{eqnarray}
This shows that there is $C>0$ such that
\begin{equation*}
\| \Delta_j I(t) u_{init} \|_{L^{p_1}(\R^3)} 
\le C \| \Delta_j I(t) \Delta_j u_{init} \|_{L^{p_1}(\R^3)} 
+ \| R_j (t) u_{init} \|_{L^{p_1}(\R^3)} ,
\end{equation*}
so that 
\begin{equation*}
\begin{split}
\| I(t) u_{init} & \|_{L^{r_1}((0,T),\dot{B}^\sigma_{p_1,2}(\R^3))} \\
& \le C \left\| \| 
( 2^{j\sigma} \| \Delta_j I(t) \Delta_j u_{init} \|_{L^{p_1}(\R^3)} )_j 
\|_{l^2(\Z)} \right\|_{L^{r_1}(0,T)} 
+ \left\| \| 
( 2^{j\sigma} \| R_j (t) u_{init} \|_{L^{p_1}(\R^3)} )_j \|_{l^2(\Z)} \right\|_{L^{r_1}(0,T)} \\
& \le C \| ( 2^{j\sigma} \| \Delta_j I(t) \Delta_j u_{init} \|_{L^{r_1}((0,T),L^{p_1}(\R^3))} )_j \|_{l^2(\Z)} 
+ \| ( 2^{j\sigma} \| R_j (t) u_{init} \|_{L^{r_1}((0,T),L^{p_1}(\R^3))} )_j \|_{l^2(\Z)} \\
& \hspace{10cm} \text{by Minkowski's inequality} \\
& \le C \| 
( 2^{j(\sigma+2/r_1)} \| \Delta_j u_{init} \|_{L^2(\R^3)} )_j 
\|_{l^2(\Z)} 
+ \| ( 2^{j\sigma} \| R_j (t) u_{init} \|_{L^{r_1}((0,T),L^{p_1}(\R^3))} )_j \|_{l^2(\Z)} \\
& \hspace{10cm} \text{by } \eqref{tt1} \text{ and } \eqref{bibi} \\
& \le C \| u_{init} \|_{\dot{H}^\mu} \text{ for some new constant } C, 
\text{ using } \mu=\sigma+2/r_1 \text{ and } \eqref{rest}. 
\end{split}
\end{equation*}
The estimate for $\displaystyle \int_0^t e^{(t'-t)B} Pf(t') dt'$ 
follows the same lines, using \eqref{tt2} instead of \eqref{tt1}.

To conclude the proof of  Proposition \ref{StrichartzProppalim}, 
consider the case $p_1=2$. 
As \eqref{StrichartzProppalimesti} holds, it is easy to see that 
$u$ is continuous w.r.t. time: using a smooth approximation 
$f_k \in L^{s_1}((0,T),\dot{H}^\mu)$ of $f$, we get $Lu_k=f_k$, 
and since $s_1 \ge 1$, the usual energy estimates show that 
$(u_k)_k$ is a Cauchy sequence in $C([0,T],\dot{H}^\mu)$. 
By \eqref{StrichartzProppalimesti}, as $f_k$ tends to $f$ 
in $L^{s_1}((0,T),\dot{B}^\rho_{q_1,2})$, 
$u_k$ tends to $u$ in $C([0,T],\dot{H}^\mu)$. 
\end{proof}
\begin{proof}[Proof of Proposition \ref{StrichartzProp}]
Now, in order to prove Proposition \ref{StrichartzProp}, 
it suffices to come back to the proof of Lemma~\ref{lem:estimbj}, 
taking $p_1=\infty$ (and $r_1=2$). Using the standard Young 
inequality instead of the Hardy-Littlewood-Sobolev inequality 
leads to 
\begin{equation} \label{interpo1bis} 
| b_j (f,g) | \le C 2^{2j}  \ln (1 + 2^j T ) 
\| f \|_{L^2((0,T),L^1(\R^3 ))} \| g \|_{L^2((0,T),L^1(\R^3 ))} .
\end{equation}
In particular for any $g$ in $L^2((0,T),L^1(\R^3 )) $, this yields
\begin{equation*}
| b_j (g,g) |^{1/2} \le C 2^{j}  \sqrt{\ln (1 + 2^j T ) }
 \| g \|_{L^2((0,T),L^1(\R^3 ))} .
\end{equation*}
Then we apply Lemma \ref{Lemmatt} with  $(p,r) = (2,\infty)$ 
and use \eqref{enplus}, where we commute $f$ and $g$, to estimate 
the right hand side of \eqref{tt2}. We find that for any  
$u \in L^{2}(\R^3 )$, for any  $f \in L^{1}((0,T),L^{2}(\R^3 ))$,
\begin{eqnarray*}
\| T_j(t) u \|_{L^{2} ((0,T), L^{ \infty} (\R^3 ))} \leqslant 
 C 2^{j} \sqrt{\ln (1 + 2^j T ) } \| u \|_{L^2 (\R^3)} ,\\
\|   \int_0^T  T_j(t) T_j(t')^\star f(t') dt'  \|_{L^{2} ((0,T), L^{  \infty} (\R^3 ))}
 \le   C 2^{j} \sqrt{\ln (1 + 2^j T ) }
 \| f \|_{L^1 ((0,T),L^2 (\R^3 ))} .
\end{eqnarray*}
Then we proceed as in the proof of Proposition \ref{StrichartzProppalim} to sum over the dyadic blocks whose frequencies are below the cut-off parameter $\lambda$, with an extra factor $ \sqrt{\ln (1 + \lambda T ) }$ (and \eqref{rest} holds for 
$r_1=2$, $p_1=\infty$).
\end{proof}

\subsection{Propagation of smoothness: proof of Theorem~\ref{unique}, 
case where $\mu \in (0,1)$}
\label{proofunique1}

In the case where  $\mu \in (0,1)$, Theorem~\ref{unique} 
is a consequence of the following proposition:
\begin{Proposition} 
\label{regcurl}
Suppose that $A$ and $g$ satisfy the estimates 
\eqref{plantage1}-\eqref{plantage5}, that  $\mu \in (0,1)$ and 
$u_{init} \in L^2(\R^3,\R^6 )$ with $Pu_{init} \in H^\mu(\R^3,\R^6)$. 
Then the unique solution   ${\bf u}  \in C ([0,\infty),L^2(\R^3 ))$ 
of \eqref{selec1}-\eqref{selec2} given by Lemma \ref{keskonepropre} 
belongs to $C ( [0,\infty), H^\mu (\R^3 ))$.
\end{Proposition}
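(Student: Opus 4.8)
Since Lemma~\ref{keskonepropre} already furnishes uniqueness of $\mathbf{u}=Pu$ in $C([0,\infty),L^2(\R^3))$, the plan is to produce, on each interval $[0,T]$, \emph{some} solution of \eqref{selec1}-\eqref{selec2} lying in $C([0,T],H^\mu(\R^3))$; the two solutions then coincide. By Duhamel, such a solution splits as $\mathbf{u}=\mathbf{u}_{\mathrm{lin}}+\mathbf{u}_g+\mathbf{u}_A$, where $\mathbf{u}_{\mathrm{lin}}(t):=e^{-tB}Pu_{init}$, and $\mathbf{u}_g,\mathbf{u}_A$ solve \eqref{selec1} with zero initial data and source $Pg$, $P(A\mathbf{u})$ respectively (all three pieces remaining in $\mathrm{ran}\,P$). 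I will bound each piece in $C([0,T],H^\mu)$ and, as a by-product needed later when $\mu=1/2$, in the Strichartz space $L^{r_1}((0,T),\dot B^\sigma_{p_1,2}(\R^3))$ for the indices of \eqref{def:indices1}-\eqref{def:indices3}. The coupling term $\mathbf{u}_A$ is by far the delicate one, precisely because $A$ carries no spatial smoothness.

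\textbf{The linear and the $g$ pieces.} For $\mathbf{u}_{\mathrm{lin}}$ I use that $(Id-P)Pu_{init}=0$ and $Pu_{init}\in\dot H^\mu$: the case $p_1=2$ of Proposition~\ref{StrichartzProppalim} (with $f=0$) gives $\mathbf{u}_{\mathrm{lin}}\in C([0,T],\dot H^\mu)$, which together with the usual $L^2$ energy identity yields $\mathbf{u}_{\mathrm{lin}}\in C([0,T],H^\mu)$, and the general case of the same proposition gives $\|\mathbf{u}_{\mathrm{lin}}\|_{L^{r_1}((0,T),\dot B^\sigma_{p_1,2})}\leq C(T)\|Pu_{init}\|_{\dot H^\mu}$. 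For $\mathbf{u}_g(t)=\int_0^t e^{-\tau B}Pg(t-\tau)\,d\tau$ I differentiate in $t$: $\dt\mathbf{u}_g(t)=e^{-tB}Pg(0)+\int_0^t e^{-\tau B}P\dt g(t-\tau)\,d\tau$, so \eqref{plantage4}-\eqref{plantage5} (with $q=2$) give $\dt\mathbf{u}_g\in L^\infty_{loc}((0,\infty),L^2)$, hence $B\mathbf{u}_g=Pg-\dt\mathbf{u}_g\in L^\infty_{loc}((0,\infty),L^2)$. Since $\mathbf{u}_g=P\mathbf{u}_g$ means $\Div\bigl(\kappa_i(\mathbf{u}_g)_i\bigr)=0$ and $\kappa_i-1$ is smooth and compactly supported by \eqref{hyp:OmegaBded}, one has $\curl(\mathbf{u}_g)_i\in L^2$ and $\Div(\mathbf{u}_g)_i=-\kappa_i^{-1}\nabla\kappa_i\cdot(\mathbf{u}_g)_i\in L^2$, and Plancherel (the identity $-\Delta=\curl\curl-\nabla\Div$) gives $\mathbf{u}_g\in L^\infty_{loc}((0,\infty),H^1)$. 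Interpolating with continuity in $L^2$ yields $\mathbf{u}_g\in C([0,T],H^\mu)$ for every $\mu\in(0,1)$, and, $\sigma<1$ being granted, also the required $L^{r_1}_t\dot B^\sigma_{p_1,2}$ bound.

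\textbf{The coupling term and the scheme.} It remains to control $\mathbf{u}_A$, and this is where Proposition~\ref{StrichartzProppalim} is indispensable: $P(A\mathbf{u})$ cannot be absorbed by an energy estimate (which would only reproduce $L^2$ regularity) nor, as it stands, fed into the source term of Proposition~\ref{StrichartzProppalim}, whose admissible source norms $\dot B^\rho_{q_1,2}$ all carry the positive index $\rho=\mu-1+2/q_1$. The plan is to close a fixed point for the full problem \eqref{selec1}-\eqref{selec2} in $Y_\tau:=L^\infty((0,\tau),H^\mu)\cap L^{r_1}((0,\tau),\dot B^\sigma_{p_1,2})$ on a short interval whose length $\tau$ depends only on the quantities bounded in \eqref{plantage1}-\eqref{plantage5}, then to iterate over $[0,T]$ and invoke the $C_tL^2$ uniqueness of Lemma~\ref{keskonepropre} to identify the fixed point with $\mathbf{u}$. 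As the map is affine in the unknown, everything reduces to estimating $w\mapsto\int_0^t e^{(t'-t)B}P(Aw)\,dt'$ on $Y_\tau$ with a norm tending to $0$ as $\tau\to0$. To this end I split $Aw$ by Bony's paraproduct, $Aw=T_Aw+\Pi(A,w)$ with $\Pi(A,w):=Aw-T_Aw$: the piece $T_Aw$ obeys $\|T_Aw\|_{\dot H^\mu}\lesssim\|A\|_{L^\infty}\|w\|_{\dot H^\mu}$, so once inserted in Duhamel and estimated by the $p_1=2$ case of Proposition~\ref{StrichartzProppalim} (source in $L^1_t\dot H^\mu$, output in $C_t\dot H^\mu$) it contributes a factor $\tau$; the remainder $\Pi(A,w)$ lies only in $L^\infty_tL^2$ at each fixed time, but it inherits time-regularity from $\dt A\in L^\infty_{loc}((0,\infty),L^2)$ (estimate \eqref{plantage3}) and from the equation itself, which — after an integration by parts in time in the Duhamel integral, combined with the dispersive (higher-$p_1$) part of Proposition~\ref{StrichartzProppalim} — allows one to absorb it as well, again with a small factor for $\tau$ small. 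This makes the map a contraction on a ball of $Y_\tau$; its fixed point, being the $Y_\tau$-limit of iterates that are continuous into $H^\mu$, lies in $C([0,\tau],H^\mu)$, and by Lemma~\ref{keskonepropre} it equals $\mathbf{u}$ on $[0,\tau]$; iterating in time gives $\mathbf{u}\in C([0,\infty),H^\mu)$ together with the Strichartz bounds. I expect the genuine obstacle to be exactly the treatment of $\Pi(A,w)$ — reconciling the merely bounded coefficient $A$ with the dispersive machinery — which is why the time-regularity \eqref{plantage3} has to be brought into play.
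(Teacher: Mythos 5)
Your blueprint is the paper's blueprint: Duhamel split of $\mathbf{u}$ into the free evolution, the forcing by $g$, and the coupling by $A$; use Proposition~\ref{StrichartzProppalim} (including the $p_1=2$ endpoint and the higher-$p_1$ dispersive cases); a paraproduct-type decomposition of $Aw$ with $A$ merely bounded in $x$; exploit $\partial_t A\in L^\infty_t L^2$ by differentiating the Duhamel integral in time; close a contraction on a short interval and identify the fixed point with $\mathbf{u}$ via the $C_tL^2$ uniqueness of Lemma~\ref{keskonepropre}. That is, modulo presentation, exactly the scheme of Lemmas~\ref{nereponplu}--\ref{shortimes}--\ref{paraproduit} in the paper (and the handling of $g$ via $\partial_t g\in L^\infty_t L^2$ and the identity $-\Delta=\curl\curl-\nabla\Div$ is a perfectly good substitute for Lemma~\ref{surg}). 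So this is the same approach; the question is whether the sketch of the $A$-term actually closes.

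It does not close as stated, and the gap is concrete: your iteration space $Y_\tau=L^\infty_t H^\mu\cap L^{r_1}_t\dot B^\sigma_{p_1,2}$ carries no control on $\partial_t w$. When you ``integrate by parts in time in the Duhamel integral'' for the rough piece $\Pi(A,w)$ (equivalently, differentiate $t\mapsto\int_0^t e^{(t'-t)B}P\Pi(A,w)\,dt'$ and then recover $B\mathbf{u}_A$ from $\partial_t\mathbf{u}_A$ and the source), the time derivative of the source produces both $\Pi(\partial_t A,w)$ \emph{and} $\Pi(A,\partial_t w)$. The first term is the one you anticipate handling with \eqref{plantage3}, but the second one requires a quantitative bound on $\partial_t w$ in a negative-order space that $Y_\tau$ does not provide. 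This is precisely why the paper's space $Y^\mu(T)$ contains the extra component $C^1([0,T],H^{\mu-1})$, and why the source space $Z^\mu(T)=Z^\mu_1(T)+Z^\mu_2(T)$ is built with a $\partial_t f\in L^1_tH^{\mu-1}+L^s_tB^0_{q,2}$ condition in $Z^\mu_2$: the product estimate (Lemma~\ref{paraproduit}, borrowed from Joly--M\'etivier--Rauch, which you would otherwise have to re-derive) places $A\mathbf{u}$ in this sum space, with the small factor $T^{\mu/2}$ that makes the short-time contraction work, \emph{because} it is allowed to spend both $\partial_t A$ and $\partial_t\mathbf{u}$. To repair your argument you should (i) enlarge $Y_\tau$ by a $W^{1,\infty}_tH^{\mu-1}$ (or $C^1_tH^{\mu-1}$) component, using the equation $\dt\mathbf{u}=-B\mathbf{u}+P(A\mathbf{u})+Pg$ to propagate it in the iteration, and (ii) actually prove the product estimate with the $T^{\mu/2}$ gain; once this is in place, the rest of your outline (the linear and $g$ pieces, the short-time contraction, identification via Lemma~\ref{keskonepropre}, iteration in time) is correct and coincides with the paper's proof.
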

In order to prove Proposition \ref{regcurl}
 we introduce, for $T>0$, the space
\begin{eqnarray}
Y^\mu (T) := C ( [0,T], H^\mu (\R^3 )) \cap 
C^1 ([0,T], H^{\mu-1}(\R^3))  \cap L^r ((0,T), B^0_{p,2}(\R^3)),
\end{eqnarray}
where $p := 2/(1- \mu)$ and  $r := 2/\mu$. These indices belong to  
$(2,\infty)$. 
We shall also use the space $ Z^\mu (T) := Z^\mu_1(T) + Z^\mu_2(T)$, 
where
\begin{eqnarray}
Z^\mu_1 (T) &:=& 
L^1 ( (0,T), H^\mu  (\R^3 )) \cap C ( [0,T],L^2 (\R^3 )), \\ 
Z^\mu_2 (T) &:=&  \{ f \in C([0,T],L^2(\R^3)) \cap 
L^r((0,T),B^{-1}_{p,2}(\R^3)) \mid \\  
&& \qquad \qquad \partial_t  f \in  L^1 ( (0,T), H^{\mu-1}(\R^3)) 
+ L^s  ( (0,T), B^{0}_{q,2}  (\R^3 ))  \},
\end{eqnarray}
where $q = 2/(2- \mu)$ and  $s =  2/(1+\mu)$. 
These indices belong to $(1,2)$.

\begin{Lemma}  
\label{nereponplu} 
Suppose that $\mu,T \in (0,1)$, and $f \in Z^\mu (T)$.
Then there exists $C >0$ such that for any ${\bf u  }_{init} $ 
in $H^\mu(\R^3,\R^6)$ such that $P {\bf u}_{init} ={\bf u}_{init}$, 
the unique corresponding solution 
${\bf u} \in C([0,\infty),L^2(\R^3 ))$ to  
$L {\bf u  } = f$, with ${\bf u}|_{t=0} = {\bf u}_{init}$,  
belongs to $Y^{\mu} (T)$ and 
\begin{eqnarray}
\label{nereponpluesti} 
 \|{\bf u  } \|_{ Y^\mu (T )} \leqslant C (  \|{\bf u  }_{init}  \|_{ H^\mu (\R^3 )} + \|f \|_{Z^{\mu} (T)} ).
\end{eqnarray}
\end{Lemma}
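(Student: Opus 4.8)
The plan is to represent the unique solution (Lemma~\ref{keskonepropre}, applied with $A\equiv0$ and source $f$) by Duhamel's formula
$$\mathbf{u}(t)=e^{-tB}\mathbf{u}_{init}+\int_0^t e^{(t'-t)B}f(t')\,dt',$$
and to estimate it term by term, using Proposition~\ref{StrichartzProppalim}. First I would reduce to $Pf=f$: this is the case in the application to Proposition~\ref{regcurl}, where $f=P(A\mathbf{u})+Pg$, and since $P$ is bounded on all the spaces involved one may then also take $f_i=Pf_i$ in a decomposition $f=f_1+f_2$, $f_i\in Z^\mu_i(T)$; together with $P\mathbf{u}_{init}=\mathbf{u}_{init}$ this gives $\mathbf{u}=P\mathbf{u}$, so that the spectral projectors $P_\pm$ and the dispersive machinery of Section~\ref{Disper} are available. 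Then I would cut frequencies with $S_0$. On the low-frequency part $S_0\mathbf{u}$, the standard energy estimate $\|\mathbf{u}\|_{C([0,T],L^2)}\le C\big(\|\mathbf{u}_{init}\|_{L^2}+\|f\|_{L^1((0,T),L^2)}\big)$ --- combined with Bernstein's inequality and the boundedness of $\Omega$, which let one pass for free from $L^2$ to $L^p$ and from $L^2$-regularity to $H^\mu$, $H^{\mu-1}$, $B^0_{p,2}$-regularity at low frequency --- controls $\|S_0\mathbf{u}\|_{Y^\mu(T)}$ by the right-hand side of \eqref{nereponpluesti} (note $f\in Z^\mu(T)\hookrightarrow L^1((0,T),L^2)$). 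On the high-frequency part $(Id-S_0)\mathbf{u}$ the homogeneous and inhomogeneous norms are equivalent, so it remains to bound the homogeneous quantities $\|\mathbf{u}\|_{L^r((0,T),\dot B^0_{p,2})}$ and $\|\mathbf{u}\|_{C([0,T],\dot H^\mu)}$.

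For these I would apply Proposition~\ref{StrichartzProppalim} with the pair $(p_1,r_1)$ equal to $(p,r)$ (then $1/r+1/p=1/2$, $p\in[2,\infty)$, and $\sigma=\mu-1+2/p=0$) and to $(2,\infty)$ (then $\sigma=\mu$, and the solution even lies in $C([0,T],\dot H^\mu)$). The contributions of $\mathbf{u}_{init}\in\dot H^\mu$ and of $f_1\in Z^\mu_1(T)=L^1((0,T),H^\mu)\cap C([0,T],L^2)$ are immediate, with source exponents $(q_1,s_1)=(2,1)$, for which $1/s_1+1/q_1=3/2$ and $\rho=\mu-1+2/q_1=\mu$, so the relevant source norm is exactly $\|f_1\|_{L^1((0,T),\dot H^\mu)}$.

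The term coming from $f_2\in Z^\mu_2(T)$ is the delicate one, and the definition of $Z^\mu_2$ is tailored for it. I would integrate by parts in time in $\int_0^t e^{(t'-t)B}f_2(t')\,dt'$, using that on $\text{ran}\,P$ the operator $B$ is elliptic of order one, so $B^{-1}$ there is a well-defined order $-1$ operator commuting with $P_\pm$ and gaining one derivative on homogeneous Besov spaces, and that $\partial_{t'}\big(B^{-1}e^{(t'-t)B}\big)=e^{(t'-t)B}$ on $\text{ran}\,P$. This yields three pieces: the boundary term $B^{-1}f_2(t)$, controlled in $L^r((0,T),\dot B^0_{p,2})$ by $\|f_2\|_{L^r((0,T),B^{-1}_{p,2})}$ and in $C([0,T],\dot H^\mu)$ by $\|f_2\|_{C([0,T],L^2)}$ (at high frequency $\dot H^1\hookrightarrow\dot H^\mu$); the boundary term $e^{-tB}B^{-1}f_2(0)$, handled by Proposition~\ref{StrichartzProppalim} since at high frequency $\|B^{-1}f_2(0)\|_{\dot H^\mu}\lesssim\|f_2(0)\|_{L^2}$; and the remaining integral $-\int_0^t B^{-1}e^{(t'-t)B}\partial_{t'}f_2(t')\,dt'$, whose source $B^{-1}\partial_t f_2$ lies in $L^1((0,T),\dot H^\mu)+L^s((0,T),\dot B^1_{q,2})$, $B^{-1}$ gaining a derivative on $\partial_t f_2\in L^1((0,T),H^{\mu-1})+L^s((0,T),B^0_{q,2})$. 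The first summand is again treated with $(q_1,s_1)=(2,1)$; for the second I would use $(q_1,s_1)=(q,s)$, for which $1/s+1/q=(1+\mu)/2+(2-\mu)/2=3/2$ and $\rho=\mu-1+2/q=1$, so the source norm is $\|\partial_t f_2\|_{L^s((0,T),\dot B^0_{q,2})}$. Summing these contributions gives the $C([0,T],H^\mu)\cap L^r((0,T),B^0_{p,2})$ bound in \eqref{nereponpluesti}; membership in $C^1([0,T],H^{\mu-1})$ is then read directly off $\partial_t\mathbf{u}=-B\mathbf{u}+f$, since $B$ maps $H^\mu$ into $H^{\mu-1}$ and $f\in Z^\mu(T)\hookrightarrow C([0,T],L^2)\hookrightarrow C([0,T],H^{\mu-1})$.

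The main obstacle will be the $f_2$ term: making the integration by parts in time rigorous forces the careful low/high-frequency splitting above (needed both to give meaning to $B^{-1}$ on $\text{ran}\,P$ and to trade the inhomogeneous norms of $Z^\mu_2$ and $Y^\mu$ for homogeneous ones at high frequency), and one then has to check that every exponent produced along the way stays in the admissible ranges \eqref{def:indices1}--\eqref{def:indices3}.
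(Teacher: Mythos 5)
Your proposal is correct and follows essentially the same route as the paper. Your integration by parts in Duhamel's formula for the $f_2$ contribution is the same calculation the paper performs: there, one differentiates $L u_2=(Id-S_0)f_2$ in time, estimates $\partial_t u_2$ via Proposition~\ref{StrichartzProppalim} with the shifted index $\mu-1$ and the exponent pairs $(2,1)$ and $(q,s)$, and recovers $u_2$ from $Bu_2=-\partial_t u_2+(Id-S_0)f_2$ using $Pu_2=u_2$ and the ellipticity of $B$ on $\text{ran}\,P$; writing Duhamel's formula for $\partial_t u_2$ and substituting into that recovery identity reproduces your three terms $B^{-1}f_2(t)$, $-e^{-tB}B^{-1}f_2(0)$ and $-\int_0^t B^{-1}e^{(t'-t)B}\partial_{t'}f_2\,dt'$ verbatim. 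Your exponent choices all match the paper's, and you state the $p_1=2$ case explicitly to get $C([0,T],\dot H^\mu)$, which the paper leaves implicit. The one place the paper's bookkeeping is cleaner: rather than decomposing the solution as $S_0{\bf u}+(Id-S_0){\bf u}$ (which, since $B$ has variable coefficients, produces a commutator $[S_0,B]$ and still puts low frequencies under $B^{-1}$), the paper defines $u_0,u_1,u_2$ by three separate Cauchy problems whose data and sources already carry the frequency cut --- $S_0 f$ and $S_0{\bf u}_{init}$ for $u_0$; $(Id-S_0)f_1$ and $(Id-S_0){\bf u}_{init}$ for $u_1$; $(Id-S_0)f_2$ with zero data for $u_2$. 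This way $B^{-1}$ only ever meets high frequencies and no commutator estimate is required. You flag the low/high-frequency issue yourself, but to make it go away cleanly the cut has to land on the source and the initial data, not on the solution.
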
 
\begin{proof}  
By definition 
$f$ splits into $f= f_1 + f_2 $, with $f_i \in Z^\mu_i (T)$, and 
${\bf u  }$ into ${\bf u  }= u_0 + u_1 + u_2$, 
where the $u_i$ solve the following  hyperbolic Cauchy problems:
\begin{align*}
L u_0 & = S_0 f  \qquad \text{ with } 
\quad u_0 |_{t=0} = S_0{\bf u}_{init}, \\
L u_1 & = (Id-S_0) f_1  \quad \text{ with } 
\quad  u_1  |_{t=0} = (Id-S_0) {\bf u  }_{init} ,\\ 
L u_2 & = (Id-S_0) f_2  \quad \text{ with }  
\quad u_2  |_{t=0} = 0 .
\end{align*}
We already have by energy estimates that the $u_i$ are  in $C ( [0,\infty), L^2 (\R^3 ))$. Then one gets the estimates of the $u_i$ in $C^1 ( [0,T], H^{\mu-1} (\R^3 )) $ by using the equations. To get the estimates in $Y^\mu (T )$ it therefore only remains to get the estimates in $ L^r  ( (0,T), B^0_{p,2} (\R^3 ))$.

For $u_0$ this just follows the energy estimate thanks to Bernstein lemma. 

In order to estimate $u_1$ we apply Proposition 
\ref{StrichartzProppalim} with 
$p_1 = p$, $r_1 = r$, $q_1 = 2$, $s_1 = 1$, $\sigma=0$, $\rho = \mu$. This gives the estimate of $u_1$ in $ L^r ((0,T), B^0_{p,2}(\R^3))$ 
by the right-hand side of   \eqref{nereponpluesti}. 
  
In order to estimate $u_2$ we observe that 
$\partial_t u_2$ satisfies 
$L \partial_t u_2 =  (Id-S_0) \partial_t f_2 ,\text{ with }  
\partial_t u_2  |_{t=0} =  (Id-S_0) f_2 |_{t=0} $. 
By assumption there exists $g_a \in  L^1 ((0,T), H^{\mu-1}(\R^3 ))$ 
and $g_b  \in  L^s  ( (0,T), B^{0}_{q,2}  (\R^3 ))$ such that 
$\partial_t f_2 = g_a + g_b$. We split accordingly $\partial_t u_2$ 
into $ \partial_t u_2 = u_a + u_b$, where $u_a  $ solves  
$L u_a =  g_a ,\text{ with } u_a  |_{t=0} =  (Id-S_0) f_2 |_{t=0}$, 
and $u_b $ solves  $L u_b =  g_b ,\text{ with } u_b  |_{t=0} =  0$. 
In order to estimate $u_a$ (respectively $u_b$) we apply Proposition 
\ref{StrichartzProppalim} with $p_1 = p$, $r_1 = r$, 
$q_1 = 2$, $s_1 = 1$, $\sigma=-1$, $\rho =  \mu-1$  
(respectively with $p_1 = p$, $r_1 = r$, $q_1 = q$, $s_1 = s$, 
$\sigma=-1$, $\rho = 0$ and $\mu-1$ instead of $\mu$). 
This yields the estimate of $\partial_t u_2$ in  
$L^r ((0,T), B^{-1}_{p,2}(\R^3 ))$ by   $\|f_2 \|_{Z^{\mu}_2 (T)}$. 
As a consequence $Bu_2 = - \partial_t u_2  
+  (Id-S_0)  f_2 \in L^r  ( (0,T), B^{-1}_{p,2} (\R^3 )) $. 
Since $P u_2 =u_2$ this entails that  
$u_2 \in L^r  ( (0,T), B^{0}_{p,2} (\R^3 ))$.
  
\end{proof}

The proof of Proposition \ref{regcurl} follows by induction of the following lemma:
\begin{Lemma}  
\label{shortimes} 
Suppose that $A$ and $g$ satisfy the estimates \eqref{plantage1}-\eqref{plantage5}, and  $\mu \in (0,1)$.
Then there exists $T_1 > 0$ and $C >0$ such that for any  
$u_{init} \in L^2 (\R^3,\R^6)$ with $Pu_{init} \in H^\mu(\R^3,\R^6)$, 
the unique corresponding solution 
${\bf u} \in C([0,\infty),L^2(\R^3))$ to 
\eqref{selec1}-\eqref{selec2} given by Lemma \ref{keskonepropre} 
belongs to $Y^{\mu} (T_1)$ and 
\begin{eqnarray}
\|{\bf u  } \|_{ Y^\mu (T_1 )} \leqslant 
C ( \| P u_{init}  \|_{ H^\mu (\R^3 )} + \|g \|_{Z^{\mu} (T_1)} ).
\end{eqnarray}
\end{Lemma}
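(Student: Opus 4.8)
The idea is to set up a fixed-point argument in the space $Y^\mu(T_1)$ for small $T_1$, treating the linear term $P(A\mathbf u)$ as a perturbation and using Lemma \ref{nereponplu} to solve the linear Cauchy problem $L\mathbf u = P(A\mathbf u)+Pg$. First I would check that $Pg \in Z^\mu(T_1)$: by \eqref{plantage4} we have $g\in C([0,T_1],L^p(\R^3))\subset C([0,T_1],L^2)\cap L^r((0,T_1),L^p)$ (using boundedness of the time interval and that $p=2/(1-\mu)>2$ so $L^p\hookrightarrow B^{-1}_{p,2}$ is false in general but $L^p\hookrightarrow B^0_{p,2}\hookrightarrow B^{-1}_{p,2}$ holds), and by \eqref{plantage5}, $\partial_t g\in L^1((0,T_1),L^2)+L^s((0,T_1),L^q)\subset L^1((0,T_1),H^{\mu-1})+L^s((0,T_1),B^0_{q,2})$; applying $P$ preserves these memberships since $P$ acts on Besov spaces. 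Hence $Pg\in Z^\mu_2(T_1)$, and one records $\|Pg\|_{Z^\mu(T_1)}\le C(\|g\|_{...})$ with $C$ depending on $T_1$ but bounded as $T_1\to 0$.

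Next, for $\mathbf w\in Y^\mu(T_1)$ I would check that $A\mathbf w$, and more precisely $P(A\mathbf w)$, lies in $Z^\mu_1(T_1)=L^1((0,T_1),H^\mu)\cap C([0,T_1],L^2)$ with a small norm. Indeed, $A\in L^\infty_{loc}((0,\infty),L^\infty)$ by \eqref{plantage1}, so $\|A(t)\mathbf w(t)\|_{L^2}\le \|A(t)\|_{L^\infty}\|\mathbf w(t)\|_{L^2}$, giving the $C([0,T_1],L^2)$ bound; for the $H^\mu$ bound one writes $A\mathbf w$ as a product and uses the paraproduct/product estimate $\|A\mathbf w\|_{H^\mu}\le C(\|A\|_{L^\infty}\|\mathbf w\|_{H^\mu}+\|A\|_{W^{?,?}}\|\mathbf w\|_{L^2})$ — here one needs enough regularity on $A$. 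Since \eqref{plantage2}-\eqref{plantage3} only give $A\in C([0,\infty),L^2)$ with $\partial_t A\in L^\infty_{loc}L^2$, not spatial smoothness, the clean way is instead: for $\mu\in(0,1)$ the multiplication $\mathbf w\mapsto A\mathbf w$ maps $H^\mu\to H^\mu$ boundedly provided $A\in L^\infty\cap H^{3/2+}$... which we do not have. The actual route, following \cite{jmr}, is to avoid placing $A\mathbf w$ in $H^\mu$ directly: one splits $A\mathbf w = S_0(A\mathbf w) + (Id-S_0)(A\mathbf w)$, estimates the low-frequency part in $C([0,T_1],L^2)$ (hence in $Z^\mu_1$ after a trivial Bernstein gain), and for the high-frequency part uses that $(Id-S_0)(A\mathbf w)$ still enjoys $L^1_tH^\mu$ control because the commutator of $(Id-S_0)$ with multiplication by the merely-$L^\infty$ function $A$ is bounded $L^2\to L^2$ — this is exactly the kind of estimate that makes $Z^\mu_1$ (rather than $L^1_tH^\mu$ alone) the right target space. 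Then $P(A\mathbf w)\in Z^\mu_1(T_1)$, with norm $\le \|A\|_{L^\infty_TL^\infty}\,T_1\,\|\mathbf w\|_{Y^\mu(T_1)}$ plus lower-order contributions, the factor $T_1$ coming from the $L^1$-in-time integration.

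Armed with these, define $\Phi(\mathbf w)$ to be the solution of $L(\Phi\mathbf w)=P(A\mathbf w)+Pg$, $\Phi\mathbf w|_{t=0}=Pu_{init}$, given by Lemma \ref{nereponplu}; it satisfies $\|\Phi\mathbf w\|_{Y^\mu(T_1)}\le C(\|Pu_{init}\|_{H^\mu}+\|Pg\|_{Z^\mu(T_1)}+\|P(A\mathbf w)\|_{Z^\mu(T_1)})\le C(\|Pu_{init}\|_{H^\mu}+\|g\|_{...})+C\,T_1^\theta\|\mathbf w\|_{Y^\mu(T_1)}$ for some power $\theta>0$. Choosing $T_1$ small enough that $CT_1^\theta\le 1/2$, $\Phi$ maps a suitable ball of $Y^\mu(T_1)$ into itself; the same estimate applied to differences $\Phi\mathbf w-\Phi\mathbf w'$ (using linearity in $\mathbf w$, so the difference solves $L(\cdot)=P(A(\mathbf w-\mathbf w'))$ with zero data) shows $\Phi$ is a contraction. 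Its fixed point $\mathbf u$ belongs to $Y^\mu(T_1)$ and solves \eqref{selec1}-\eqref{selec2}; by the uniqueness statement of Lemma \ref{keskonepropre} it coincides with the given $C([0,\infty),L^2)$ solution. Finally, since $T_1$ and $C$ depend on $A$, $g$ only through the bounds \eqref{plantage1}-\eqref{plantage5} on a fixed compact time interval (and those bounds are controlled on $[0,1]$, say, by $\|v_{init}\|_{L^\infty}$), the constants are as claimed, and Proposition \ref{regcurl} follows by iterating over successive intervals $[kT_1,(k+1)T_1]$ since the norm $\|v(t)\|_{L^\infty}$, hence the relevant bounds on $A$ and $g$, stays under control for all finite times. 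The main obstacle is the product estimate for $A\mathbf w$: because $A$ is only $L^\infty$ in space, one cannot simply say multiplication preserves $H^\mu$, and the argument must exploit the structure of $Z^\mu_1(T)$ (the sum with a $C_tL^2$ component and the low-frequency truncation) exactly as in \cite{jmr}.
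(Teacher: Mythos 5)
Your overall architecture --- place $Pg$ and $P(A\mathbf u)$ into $Z^\mu(T_1)$, apply Lemma~\ref{nereponplu}, and close the estimate for small $T_1$ --- matches the paper's proof, which calls Lemma~\ref{paraproduit} and Lemma~\ref{surg} and then invokes Lemma~\ref{nereponplu} with $T_1$ small. However, your treatment of the key product estimate for $A\mathbf w$ has a genuine gap.

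The paper's mechanism is Lemma~\ref{paraproduit} (Joly--M\'etivier--Rauch, Lemma 5.3), which places $Au$ in the full \emph{sum} $Z^\mu(T)=Z^\mu_1(T)+Z^\mu_2(T)$ and crucially uses the \emph{time} regularity of $A$, namely $\partial_t A\in L^\infty_t L^2_x$ from \eqref{plantage3}. The escape from the fact that multiplication by an $L^\infty$-only coefficient does not preserve $H^\mu$ runs through $Z^\mu_2(T)$, whose definition controls $\partial_t f$ in low-regularity spaces: one differentiates $\partial_t(Au)=(\partial_t A)u+A\,\partial_t u$ and pairs $\partial_t A\in L^\infty_t L^2_x$ with $u\in L^r_t B^0_{p,2}$, and $A\in L^\infty_{t,x}$ with $\partial_t u\in C_tH^{\mu-1}$, exactly the components that $Y^\mu(T)$ provides. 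Your sketch never invokes $\partial_t A$ and instead claims that a commutator bound of $(Id-S_0)$ with multiplication by $A$, merely $L^2\to L^2$, yields $L^1_t H^\mu$ control of the high-frequency part; it does not. You also appear to conflate the intersection $Z^\mu_1=L^1_tH^\mu\cap C_tL^2$ with the sum $Z^\mu=Z^\mu_1+Z^\mu_2$ (you describe $Z^\mu_1$ as having a ``sum with a $C_tL^2$ component''). This product estimate is the technical heart of the lemma and is not recovered by your argument.

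A secondary inaccuracy: the actual estimate from Lemma~\ref{paraproduit} is $\|Au\|_{Z^\mu(T)}\le CT^{\mu/2}\|u\|_{Y^\mu(T)}+C\|u\|_{L^\infty_TL^2}$. The small factor is $T^{\mu/2}$ (coming from H\"older across the intersection defining $Y^\mu$), not a bare $T_1$ from $L^1$-in-time integration, and the second term does \emph{not} become small with $T_1$. It is absorbed because $\|\mathbf u\|_{L^\infty_{T_1}L^2}$ is already controlled, independently of the $Y^\mu$ norm, by the energy estimate of Lemma~\ref{keskonepropre}. The closing step is therefore an a priori estimate/bootstrap on the solution already known to exist in $C_tL^2$, rather than the clean contraction mapping you describe: if you set it up as a contraction on a ball of $Y^\mu(T_1)$ without accounting for this non-small term, the map does not shrink.
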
 
\begin{proof}  
In order to prove Lemma \ref{shortimes} we  recall the following estimate:
\begin{Lemma}[Joly-M\'etivier-Rauch  \cite{jmr}, Lemma 5.3]
 \label{paraproduit}
There is a constant $C$, which depends only on 
$\| A \|_{L^\infty ((0,T), L^\infty (\R^3 ))}$, 
$\|\partial_t  A \|_{ L^\infty ((0,T), L^2 (\R^3 ))}$ and $\mu $, 
such that for all $T \in (0,1)$ and $u \in Y^\mu (T)$, 
$Au$ belongs to $ Z^\mu (T)$, and
\begin{eqnarray}
 \label{paraproduit-est}
 \|Au \|_{ Z^\mu (T)} \leqslant C T^{\mu/2} \|u \|_{Y^{\mu} (T)} + C  \|u \|_{L^\infty ((0,T), L^2 (\R^3 ))} .
\end{eqnarray}
\end{Lemma}
Let us warn the reader  that there is a small misprint in the right hand side in Lemma 5.3 of \cite{jmr}, which is corrected above.
We also have:
\begin{Lemma}  
\label{surg} 
If $g$ satisfies \eqref{plantage4}-\eqref{plantage5}, 
then for any $T>0$, $g$ belongs to $Z^\mu (T)$.
\end{Lemma} 
\begin{proof}  
Since $1< q < 2 < p < \infty $ there hold continuous embeddings 
$L^q \subset B^0_{q,2}$ and $L^p  \subset B^0_{p,2} \subset B^{-1}_{p,2} $, so that, using  \eqref{plantage4}-\eqref{plantage5}, we get, for any $T>0$, that $g$ is in $Z^\mu_2 (T) \subset Z^\mu (T)$.
\end{proof}
According to  Lemma \ref{paraproduit} and  Lemma \ref{surg} 
there is a constant $C$ such that for all $T  \in (0,1)$, 
if  $ {\bf u  } \in Y^\mu (T)$ then $f:=P (A {\bf u  }) + P g   
\in Z^\mu (T)$ and 
\begin{eqnarray}
 \label{ctoucon}
 \|f \|_{ Z^\mu (T)} \leqslant C T^{\mu/2} \| {\bf u  } \|_{Y^{\mu} (T)} + C  \| {\bf u  } \|_{L^\infty ((0,T), L^2 (\R^3 ))} .
\end{eqnarray}
 Therefore applying Lemma \ref{nereponplu}  and choosing $T_1$ small enough we get Lemma \ref{shortimes}.
\end{proof}

\subsection{Propagation of smoothness: proof of Theorem~\ref{unique} 
\eqref{Unique1}, case where $\mu =1$}
\label{proofunique2}

We now consider  $U_{init} $ in $\Ldiv$ with 
$\curl  u_{init,i}  \in L^2  (\R^3)$, for $i=1,2$, and we consider  
$U$  solution  to \eqref{divgen1}-\eqref{MBgen2} with $U_{init}$ as 
initial data given by Theorem \ref{th:weak}. The idea is to estimate 
$B(Pu) = P(Au) + Pg - \dt Pu$ from \eqref{selec1}.
\begin{Lemma}
\label{plantageback}
Define $A$ and $g$ by \eqref{defA}-\eqref{defg2}. 
The following holds true:
\begin{eqnarray}
\label{plantage6} 
\partial_t A \in  L^\infty_{loc} ((0,\infty), L^3 (\R^3 )), \\ 
\label{plantage7} 
\partial_t  g  \in   L^\infty_{loc}( (0,\infty), L^2 (\R^3 )).
\end{eqnarray}
\end{Lemma}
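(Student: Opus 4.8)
The plan is to bootstrap from the $H^{1/2}$ regularity of $Pu$ that has already been propagated in Section~\ref{proofunique1}: Sobolev embedding turns this into $L^3$ integrability of $u$, hence of $\partial_t v$, and this extra integrability is exactly what is needed to push the crude $L^2$/$L^q$ bounds of Lemma~\ref{plantage} up to the $L^3$ and $L^2$ bounds claimed here.

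First I would apply Proposition~\ref{regcurl} with $\mu=1/2$. Since $U$ is a solution given by Theorem~\ref{th:weak} with $\curl u_{init,i}\in L^2(\R^3)$, we have $Pu_{init}\in H^1(\R^3)\subset H^{1/2}(\R^3)$, and the estimates \eqref{plantage1}--\eqref{plantage5} hold by Lemma~\ref{plantage}; hence ${\bf u}=Pu\in C([0,\infty),H^{1/2}(\R^3))$, and the embedding $H^{1/2}(\R^3)\hookrightarrow L^3(\R^3)$ gives $Pu\in C([0,\infty),L^3(\R^3))$. On the other hand, $v\in L^\infty_{loc}((0,\infty),L^\infty(\Omega))$ together with the boundedness of $\Omega$ yields $\overline v\in L^\infty_{loc}((0,\infty),L^p(\R^3))$ for all $p\in[1,\infty]$, so Lemma~\ref{PonLp} gives $(Id-P)u=(Id-P)(\kappa^{-1}\cdot l)\overline v\in L^\infty_{loc}((0,\infty),L^p(\R^3))$ for all $p\in(1,\infty)$. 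Thus $u\in L^\infty_{loc}((0,\infty),L^3(\R^3))$; using that $F_0(x,\overline v)\in L^\infty_{loc}((0,\infty),L^\infty(\Omega))\subset L^\infty_{loc}((0,\infty),L^3(\Omega))$ (as $\Omega$ is bounded) and that $|F_1(x,\overline v)|\le C_F(\|v_{init}\|_{L^\infty}e^{Kt})$, Equation~\eqref{MBgen2} then delivers the key improvement $\partial_t v\in L^\infty_{loc}((0,\infty),L^3(\Omega))$, equivalently $\partial_t\overline v=\overline{\partial_t v}\in L^\infty_{loc}((0,\infty),L^3(\R^3))$.

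Estimate \eqref{plantage6} is then immediate: differentiating $A=(\kappa^{-1}\cdot l)F_1(x,\overline v)$ in time gives $\partial_t A=(\kappa^{-1}\cdot l)(\partial_v F_1(x,\overline v)\cdot\partial_t\overline v)$, which is pointwise dominated by $\|\kappa^{-1}\cdot l\|_{L^\infty}C_F(\|v_{init}\|_{L^\infty}e^{Kt})\,|\partial_t\overline v|\in L^\infty_{loc}((0,\infty),L^3(\R^3))$. For \eqref{plantage7} I would use the three-term expression for $\partial_t g$ recorded in the proof of Lemma~\ref{plantage}. The first term, $(\kappa^{-1}\cdot l)(\partial_v F_0(x,\overline v)\cdot\partial_t\overline v)$, is bounded by $C|\partial_t\overline v|\in L^2(\R^3)$; the second term, $(\kappa^{-1}\cdot l)F_1(x,\overline v)(Id-P)(\kappa^{-1}\cdot l)\partial_t\overline v$, is bounded by $C\,|(Id-P)(\kappa^{-1}\cdot l)\partial_t\overline v|\in L^2(\R^3)$ since $Id-P$ is bounded on $L^2$ and $\partial_t\overline v\in L^2$. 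The third term, $(\kappa^{-1}\cdot l)(\partial_v F_1(x,\overline v)\cdot\partial_t\overline v)\cdot(Id-P)(\kappa^{-1}\cdot l)\overline v$, is the delicate one: I would write $\partial_v F_1(x,\overline v)\cdot\partial_t\overline v\in L^3(\R^3)$ (pointwise bounded by $C_F|\partial_t\overline v|$) and, by Lemma~\ref{PonLp}, $(Id-P)(\kappa^{-1}\cdot l)\overline v\in L^6(\R^3)$, so that H\"older's inequality with $1/3+1/6=1/2$ places the product in $L^2(\R^3)$, all bounds being locally uniform in time. Multiplying by the $L^\infty$ factor $\kappa^{-1}\cdot l$ does not change this, giving $\partial_t g\in L^\infty_{loc}((0,\infty),L^2(\R^3))$.

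The genuinely new point compared with Lemma~\ref{plantage}, and the step I expect to be the crux, is precisely the upgrade $\partial_t v\in L^2(\Omega)\rightsquigarrow\partial_t v\in L^3(\Omega)$ coming from $Pu\in H^{1/2}\hookrightarrow L^3$: without it, the third term of $\partial_t g$ would only land in $L^q(\R^3)$ for $q<2$ (which, $\Omega$ being bounded, does not contain $L^2$), and $\partial_t A$ would only be $L^2$ rather than $L^3$. Everything else is routine book-keeping with the $L^\infty$ bounds on $v$, $F_j$, $\partial_v F_j$ and with the $L^p$-boundedness of $Id-P$ from Lemma~\ref{PonLp}.
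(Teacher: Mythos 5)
Your proof is correct and follows essentially the same route as the paper's (very terse, three-sentence) argument: apply the $\mu=1/2$ case of the propagation-of-regularity result, use $H^{1/2}\hookrightarrow L^3$ together with Lemma~\ref{PonLp} to upgrade $\partial_t v$ to $L^3(\Omega)$, then conclude term by term. You supply the details the paper omits, in particular the H\"older step $1/3+1/6=1/2$ for the third term of $\partial_t g$, and you are slightly more careful than the paper in distinguishing $Pu\in H^{1/2}$ from the $(Id-P)u$ part handled via Lemma~\ref{PonLp}.
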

\begin{proof}
We apply Theorem \ref{unique}  in the case  $\mu = 1/2$.
This yields that $u$ belongs to $C ( [0,\infty), H^{1/2} (\R^3 )) $ 
and thus also to $ L^\infty_{loc} ((0,\infty), L^3 (\R^3 ))$. 
We then infer that $ \partial_t v \in  L^\infty_{loc} ((0,\infty), 
L^3 (\R^3 ))$ and then we get the estimates \eqref{plantage6} and 
\eqref{plantage6}.
\end{proof}
\begin{Lemma}
\label{dtf}
Define $A$ and $g$ by \eqref{defA}-\eqref{defg2}. 
If ${\bf u  } \in L^\infty_{loc}( (0,\infty), H^1 (\R^3 )) \cap 
W^{1,\infty}_{loc}( (0,\infty), L^2 (\R^3 ))$ then 
$f := P(A {\bf u  } + g )$ satisfies 
$  \partial_t  f  \in L^\infty_{loc}( (0,\infty), L^2 (\R^3 ))$, 
and for any $T>0$, there exists $C>0$, which only depends on $T$, 
$A$ and $g$, such that
\begin{eqnarray}
\label{dtfesti}
\| \partial_t  f \|_{ L^\infty ((0,T), L^2 (\R^3 )) } 
\leqslant  C(  \|  {\bf u  } \|_{ L^\infty ((0,T), H^1 (\R^3 )) } 
+ \| \partial_t  {\bf u  } \|_{ L^\infty ((0,T), L^2 (\R^3 )) } + 1 ).
\end{eqnarray}
\end{Lemma}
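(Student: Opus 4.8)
The statement concerns the time derivative of $f := P(A\mathbf{u} + g)$, given the estimates on $A$ and $g$ from Lemma~\ref{plantageback} (which upgrades Lemma~\ref{plantage}), and given that $\mathbf{u}$ has $H^1$ regularity in space and Lipschitz-in-time regularity with $L^2$ values. The plan is to differentiate the product formally and estimate each resulting term in $L^\infty_t L^2_x$ by interpolation/Hölder, then invoke the $L^p$-boundedness of $P$ (Lemma~\ref{PonLp}). Let me spell out the steps.

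First, since $P$ is linear, bounded on $L^2$ (indeed on all $L^p$, $1<p<\infty$) and commutes with $\partial_t$, it suffices to bound $\partial_t(A\mathbf{u})$ and $\partial_t g$ in $L^\infty_{loc}((0,\infty),L^2(\R^3))$. The term $\partial_t g$ is controlled directly by \eqref{plantage7} in Lemma~\ref{plantageback}. For the product, write formally
\begin{eqnarray*}
\partial_t(A\mathbf{u}) = (\partial_t A)\,\mathbf{u} + A\,(\partial_t \mathbf{u}).
\end{eqnarray*}
The second term is immediate: $A \in L^\infty_{loc}((0,\infty),L^\infty(\R^3))$ by \eqref{plantage1}, and $\partial_t\mathbf{u} \in L^\infty_{loc}((0,\infty),L^2(\R^3))$ by hypothesis, so $\|A\,\partial_t\mathbf{u}\|_{L^\infty((0,T),L^2)} \le \|A\|_{L^\infty((0,T),L^\infty)}\,\|\partial_t\mathbf{u}\|_{L^\infty((0,T),L^2)}$.

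For the term $(\partial_t A)\,\mathbf{u}$, the point is to use $\partial_t A \in L^\infty_{loc}((0,\infty),L^3(\R^3))$ from \eqref{plantage6} together with the embedding $H^1(\R^3) \hookrightarrow L^6(\R^3)$ in dimension three, which gives $\mathbf{u} \in L^\infty_{loc}((0,\infty),L^6(\R^3))$. Since $1/3 + 1/6 = 1/2$, Hölder's inequality yields
\begin{eqnarray*}
\|(\partial_t A)\,\mathbf{u}\|_{L^2(\R^3)} \le \|\partial_t A\|_{L^3(\R^3)}\,\|\mathbf{u}\|_{L^6(\R^3)} \le C\,\|\partial_t A\|_{L^3(\R^3)}\,\|\mathbf{u}\|_{H^1(\R^3)},
\end{eqnarray*}
whence the bound in $L^\infty((0,T),L^2)$. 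Collecting the three contributions and applying the $L^2$-boundedness of $P$ gives $\partial_t f \in L^\infty_{loc}((0,\infty),L^2(\R^3))$ together with \eqref{dtfesti}, the additive constant $1$ absorbing the contribution of $\partial_t g$ (which depends only on $T$, $A$, $g$ through $v$).

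The main technical point to be careful about is justifying the formal differentiation of the product $A\mathbf{u}$: one should check that $t \mapsto A(t)\mathbf{u}(t)$ is genuinely differentiable with values in $L^2$ (or at least weakly differentiable with derivative in $L^\infty_t L^2_x$), using that $A \in C([0,\infty),L^2)$ with $\partial_t A \in L^\infty_{loc}((0,\infty),L^3)$ (so $A$ is Lipschitz in time with $L^3$ values) and $\mathbf{u}$ is Lipschitz in time with $L^2$ values and bounded in $H^1 \hookrightarrow L^6$; a standard approximation/difference-quotient argument (or Leibniz rule for the weak time derivative) then legitimizes the identity $\partial_t(A\mathbf{u}) = (\partial_t A)\mathbf{u} + A\partial_t\mathbf{u}$ in $\mathcal{D}'$, after which the above estimates apply and show the right-hand side is in $L^\infty_t L^2_x$. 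Everything else is routine Hölder and Sobolev embedding.
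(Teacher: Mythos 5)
Your proof is correct and follows essentially the same route as the paper: apply the Leibniz rule to $A\mathbf{u}$, use the $L^\infty$ bound on $A$ for the term $A\,\partial_t\mathbf{u}$, use $\partial_t A\in L^\infty_{loc}L^3$ together with $H^1(\R^3)\hookrightarrow L^6(\R^3)$ and H\"older ($1/3+1/6=1/2$) for $(\partial_t A)\mathbf{u}$, use \eqref{plantage7} for $\partial_t g$, and then invoke the boundedness of $P$ on $L^2$. Your additional remark on rigorously justifying the weak Leibniz identity is a welcome (and correct) elaboration, but does not change the argument.
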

\begin{proof}
We have $\partial_t  f = P(\partial_t A {\bf u  } 
+ A \partial_t {\bf u  } + \partial_t g )$, so that using 
H\"older's inequality, the continuous embedding 
$ L^6 (\R^3 ) \subset H^1 (\R^3 )$ and the estimates   
\eqref{plantage6}-\eqref{plantage7}, we get  \eqref{dtfesti}.
\end{proof}
Now we observe that $\partial_t  {\bf u  } $ solves
 for $ x  \in  \R^3$,
\begin{eqnarray}
\label{selec1dt}
( \dt + B ) \partial_t {\bf u  }  &=&  \partial_t (P (A {\bf u  }) 
+ P g ), \\   
\label{selec2dt}
\partial_t {\bf u  }|_{t=0}   &=& -B u_{init} 
+  P (A|_{t=0} {\bf u_{init}  }) + P g|_{t=0}         .
\end{eqnarray}
This provides an estimate of $ \partial_t {\bf u  } $ in $L^\infty_{loc}( (0,\infty),  L^2 (\R^3 ))$, and using \eqref{selec1}-\eqref{selec2}, of  $B{\bf u  }$ in  
$L^\infty_{loc}( (0,\infty), L^2 (\R^3 ))$, hence of 
${\bf u  }$ in  $L^\infty_{loc}( (0,\infty), H^1 (\R^3 ))$.

\subsection{Uniqueness: proof of Theorem~\ref{unique} \eqref{Unique2}}
\label{proofunique3} 

Let us recall that in this section we assume that there exists 
$j \in \{1,2\}$ such that $l_{3-j} F  = 0$ and such that $F$ depends 
only on $(x,v,u_j)$. 
Let $U_{init} \in \Ldiv$ with $\curl u_{init,i} \in L^2(\R^3)$, 
for $i=1,2$. 
Let $U$ and $U'$ be two solutions to \eqref{divgen1}-\eqref{MBgen2}, 
given by Theorem \ref{th:weak}, both with $U_{init}$ as  initial data. 
The difference $\delta U := U'-U =( \delta u ,  \delta v )$ between
$U=(u,v)$ and $U' =(u' ,v')$ is solution to the following 
hyperbolic system 
\begin{equation}
\label{systDiff}
\begin{split}
M(\delta U) := ((\partial_t + B) \delta u , \partial_t \delta v )
= (  (\kappa^{-1} \cdot l) \delta F , \delta F) , 
\quad \mbox{ where } \quad
\delta F = F(x,v',u'_j)-F(x,v,u_j).
\end{split}
\end{equation}
Thanks to \eqref{affine} we have
\begin{equation}
 \label{brazil}
\delta F = F_0 (x,v',u'_j)-F_0 (x,v,u_j) 
+ (F_1 (x,v')- F_1 (x,v)) \cdot u_j +  F_1 (x,v') \cdot (u_j - u'_j) .
\end{equation}
The first and  last terms in  \eqref{brazil} are easily estimated in $L^2 ( \R^3)$ by $C_F ( \| v_0 \|_{L^\infty (  \R^3)} e^{Kt} )  \| \delta U \|_{L^2 (  \R^3)}$.
To deal with the second one we construct a $L^\infty$ approximation 
of the field $u_j$ (analogous to the ones of \cite{jmr}, Lemma~6.2, 
and \cite{Had00}, Lemma~2.7).
\begin{Lemma} \label{lem:approxLinf}
There is a non-decreasing function $C:(0,\infty)\rightarrow(0,\infty)$ 
and for all $T>0$, there exists 
$(u_{\parallel,j}^\lambda)_{\lambda \ge e} \subset  
L^\infty((0,T)\times\R^3)$ such that for all $\lambda \ge e$, 
\begin{eqnarray} 
\label{eq:approxLinf1}
& \|u_{\parallel,j}^\lambda (t) \|_{L^\infty ((0,T) \times  \R^3)} 
\leq C(T) \ln\lambda, \quad \mbox{ and } \quad  
\| \big((Id-P_j)u_j - u_{\parallel,j}^\lambda \big)(t)\|_{L^2 (\R^3)} 
\leq C(T) / \lambda, \quad \mbox{ for all } t  \in[0,T] , \\
\label{eq:approxLinf2}
& \| S^\lambda P_j u_j \|_{L^2 ((0,T),L^\infty ( \R^3))} 
\leq C(T) \sqrt{\ln\lambda} , \quad \mbox{ and } \quad    
\| \big((Id- S^\lambda  ) P_j u_j  (t) \|_{L^2 (\R^3)} 
\leq C(T) / \lambda  ,  \quad \mbox{ for all } t  \in[0,T] .
\end{eqnarray}
\end{Lemma}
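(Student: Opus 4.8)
The plan is to split $u_j$ into its "curl-free" part $(Id-P_j)u_j$ and its "divergence-free" part $P_j u_j$, and to produce a separate $L^\infty$-type approximation for each. For the curl-free part, recall that the constraint \eqref{divgen1} forces $(Id-P_j)u_j = (Id-P_j)(\kappa_j^{-1} l_j)\overline{v}$; since $v \in L^\infty_{loc}((0,\infty),L^\infty(\Omega))$ by Theorem~\ref{th:weak}\eqref{estimweak1}, and $\Omega$ is bounded, this vector field lies in $L^\infty((0,T),L^p(\R^3))$ for every finite $p$, with a norm growing like $p$ in view of Lemma~\ref{PonLp}. The plan is then to set $u_{\parallel,j}^\lambda := \psi_{\ln\lambda}\big((Id-P_j)(\kappa_j^{-1}l_j)\overline{v}\big)$, where $\psi_M$ is the pointwise truncation at height $M$ (keeping $|w|\le M$ unchanged and replacing $w$ by $M w/|w|$ when $|w|>M$). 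Then $\|u_{\parallel,j}^\lambda(t)\|_{L^\infty}\le \ln\lambda$ trivially, and by Chebyshev's inequality the $L^2$-error of the truncation at height $M$ is controlled by $M^{-(p-2)/2}$ times a power of the $L^p$-norm; choosing $M=C(T)\ln\lambda$ and optimizing over $p\sim \ln\lambda$ (so that $M^{p}\ge \lambda^{2}$) gives the bound $C(T)/\lambda$ in \eqref{eq:approxLinf1}. This is the standard Yudovich-type logarithmic truncation argument, and the exponential gain in $\lambda$ comes exactly from the linear-in-$p$ growth of the $L^p$ norms furnished by Lemma~\ref{PonLp}.

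For the divergence-free part, the plan is to invoke the low-frequency limit Strichartz estimate of Proposition~\ref{StrichartzProp}. By Theorem~\ref{unique}\eqref{Unique1} applied with $\mu=1$ (which is available here since we are assuming $\curl u_{init,i}\in L^2$), one has $Pu \in L^\infty_{loc}((0,\infty),H^1(\R^3))$; applying the argument of Section~\ref{proofunique2} to $\partial_t(Pu)$ via \eqref{selec1dt}-\eqref{selec2dt} and Lemma~\ref{dtf}, one also gets $Pu \in W^{1,\infty}_{loc}((0,\infty),L^2(\R^3))$, hence $Pu\in C([0,T),H^1(\R^3))$ and solves \eqref{selec1}, i.e. an equation of the form \eqref{R1} with source $f=P(Au)+Pg \in L^1((0,T),H^1(\R^3))$ (using $\partial_x(P(Au)+Pg)\in L^1_tL^2_x$, which follows from \eqref{plantage1}-\eqref{plantage7} and $Pu\in L^\infty_tH^1_x$). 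Proposition~\ref{StrichartzProp} then yields
\begin{equation*}
\|S^\lambda P_j u_j\|_{L^2((0,T),L^\infty(\R^3))} \le C(T)\sqrt{\ln(1+\lambda T)}\big(\|\partial_x Pu_{init}\|_{L^2}+\|\partial_x Pf\|_{L^1_tL^2_x}\big) \le C(T)\sqrt{\ln\lambda},
\end{equation*}
which is the first half of \eqref{eq:approxLinf2}. For the second half, since $S^\lambda$ is the Fourier multiplier by $\chi(\cdot/\lambda)$ which equals $1$ on $|\xi|\le \lambda/2$, the operator $Id-S^\lambda$ is supported in $|\xi|\ge \lambda/2$, so $\|(Id-S^\lambda)P_j u_j(t)\|_{L^2}\le (2/\lambda)\||\xi|\widehat{P_j u_j(t)}\|_{L^2}\le (2/\lambda)\|P_j u_j(t)\|_{H^1}\le C(T)/\lambda$ using the $L^\infty_tH^1_x$ bound on $Pu$.

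The main obstacle, and the only genuinely substantive point, is establishing that $Pu$ has the regularity $C([0,T),H^1)\cap W^{1,\infty}_{loc}((0,\infty),L^2)$ needed to legitimately feed it into Proposition~\ref{StrichartzProp}: this is precisely the content of Theorem~\ref{unique}\eqref{Unique1} in the case $\mu=1$ (Section~\ref{proofunique2}), which itself relies bootstrapping from the case $\mu=1/2$ via Lemma~\ref{plantageback}. Once that regularity is in hand, both halves of the lemma are routine: the curl-free estimate is a self-contained truncation computation, and the divergence-free estimate is a direct application of Propositions~\ref{StrichartzProp} together with an elementary high-frequency $L^2$ bound. One should also take care that all constants depend only on $T$ (through $e^{KT}\|v_{init}\|_{L^\infty}$, $\|\kappa^{-1}\cdot l\|_{L^\infty}$, and the $H^1$-norm of $Pu$ on $[0,T]$, which is itself controlled by $T$ and the data by the preceding lemmas), so that the function $C(\cdot)$ can indeed be taken non-decreasing.
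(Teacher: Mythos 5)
Your treatment of the curl-free part \eqref{eq:approxLinf1} is essentially the paper's argument (the paper uses a hard cut-off to $0$ rather than a radial truncation, but the Chebyshev/Yudovich computation with $p\sim\ln\lambda$ and the linear-in-$p$ growth from Lemma~\ref{PonLp} is the same). The divergence-free part \eqref{eq:approxLinf2}, however, contains a genuine gap.

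You feed $Pu$ directly into Proposition~\ref{StrichartzProp}, justifying this by the claim that $\partial_x Pf \in L^1_tL^2_x$ where $f=P(Au)+Pg$. This claim is false, and the paper explicitly flags it: ``since we are not able to bound $\partial_x f$ in $L^\infty_{loc}((0,\infty),L^2(\R^3))$, we cannot apply \eqref{StrichartzEsti} directly.'' The reason is that $A=(\kappa^{-1}\cdot l)F_1(x,\overline v)$ and $g=(\kappa^{-1}\cdot l)F(x,\overline v,(Id-P)(\kappa^{-1}\cdot l)\overline v)$ are built from $\overline v$, the zero-extension of $v$ outside $\Omega$. Since $\overline v$ jumps across $\partial\Omega$, its spatial gradient carries a singular part supported on $\partial\Omega$, so $\partial_x A,\partial_x g\notin L^2$. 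The estimates \eqref{plantage1}--\eqref{plantage7} give only $L^\infty$, $L^p$, and \emph{time}-derivative bounds on $A$, $g$ — no spatial-derivative bounds — and this is by design; the paper even remarks that one cannot expect more regularity precisely because $\overline v$ is discontinuous across $\partial\Omega$.

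The paper overcomes this with vector potentials. It introduces $\phi$ with $\curl\phi_i=\kappa_i ({\bf u})_i$ and $\psi$ with $\curl\psi_i=\kappa_i f_i$, governed by the twisted system $(\dt+\check B)\phi=\psi$; since $\psi\sim\curl^{-1}f$ gains one spatial derivative over $f$, the requirement $\dt\partial_x\psi\in L^1_tL^2_x$ reduces to $\dt f\in L^\infty_{loc}L^2$, which Lemma~\ref{dtf} does provide. The Strichartz estimate \eqref{StrichartzEsti} is then applied to $\dt\phi$, and $P_j u_j$ is recovered from $\phi$ via \eqref{potentielvecteur} and \eqref{noncarac}. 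In short: only the temporal regularity of $f$ is available, and the vector-potential device is what converts it into the spatial regularity that Proposition~\ref{StrichartzProp} requires. You should also note that the decoupling hypothesis \eqref{decouplage} is used at this stage (not only for the final uniqueness Gronwall argument): it makes $f_{3-j}$, hence $\psi_{3-j}$, vanish, which is needed to extract $P_j u_j$ cleanly from the $\phi$-system. Without the vector potentials and the decoupling observation, the Strichartz half of the lemma does not go through.
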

Let us admit for a while Lemma~\ref{lem:approxLinf} 
in order to finish the proof of Theorem~\ref{unique}. 
Fix $T>0$ and consider $t\in(0,T)$. 
In the second term on the right-hand side of \eqref{brazil}, 
decompose $u_j = u_{\parallel,j}^\lambda 
+ \left( (Id-P_j)u_j - u_{\parallel,j}^\lambda \right) 
+ S^\lambda P_j u_j + (Id- S^\lambda  ) P_j u_j$. 
We infer from Lemma~\ref{lem:approxLinf} that 
\begin{equation}
\| \delta F \|_{L^2 (  \R^3)} 
\leq C_F \left( \| v_{init} \|_{L^\infty (  \R^3)} e^{KT} \right) 
\left( \| \delta U \|_{L^2 (\R^3)} + \left( C(T) \ln \lambda 
+ \| S^\lambda P_j u_j \|_{L^\infty (\R^3)} \right) 
\| \delta v \|_{L^2 (\R^3)}  
+ 2 \frac{C(T)}{\lambda} \| \delta v \|_{L^\infty (\R^3)} \right).
\end{equation}
The energy estimate, together with Gronwall's Lemma, gives 
$$
\|\delta U(t)\|_{L^2  (  \R^3)} \leq 2 C_F C(T) \frac{t}{\lambda}  
\exp \left( C_F \int_0^t (1+C(T)\ln \lambda + 
\| S^\lambda P_j u_j (t') \|_{L^\infty (\R^3)}) dt' \right) .
$$
Since $\int_0^t (1+C(T)\ln \lambda + 
\| S^\lambda P_j u_j (t') \|_{L^\infty (\R^3)}) dt'
\leq \overline C(T) \ln\lambda$ 
with $\overline C(T) \tendlorsque{T}{0} 0$, we choose 
$T_0$ small enough (in order to have $\overline C(T_0)<1$), and let
$\lambda$ go to infinity. This shows that $\delta U(t)$ vanishes on
$[0,T_0]$. Repeat this procedure on intervals of size $T_0$ to get 
the desired conclusion. 
\begin{proof}[Proof of Lemma \ref{lem:approxLinf}]
We define $u_{\parallel,j}^\lambda$ by setting 
$u_{\parallel,j}^\lambda(t,x) := (Id-P_j) u_j (t,x) \mbox{ if } |(Id-P_j) u_j (t,x)| \leq C \ln\lambda $,  and $u_{\parallel,j}^\lambda(t,x) :=
0 \mbox{ otherwise,}$
where the constant $C$ is chosen below (independently of 
$(t,x) \in [0,T]  \times \R^3$).
Therefore, for $p \in \lbrack2,\infty)$, 
\begin{equation}
\label{debase}
\|((Id-P_j) u_j - u_{\parallel,j}^\lambda)(t)\|_{L^2}^2 
= \int_{|(Id-P_j) u_j|\geqslant C\ln\lambda}|(Id-P_j) u_j (t)|^2 dx 
\leq (C\ln\lambda)^{2-p} \|(Id-P_j) u_j (t)\|_{L^p}^p .
\end{equation}
Now, according to Lemma~\ref{PonLp}, the projection  $Id-P$ 
acts continously in any $L^p$ with a norm less than $C_0 p$. 
Furthermore, we have
$$
\|v_{init}\|_{L^p}^p \le \|v_{init}\|_{L^\infty}^p |\Omega|,
$$
so that $\left( \|v_{init}\|_{L^p} \right)_{1\le p\le\infty}$ 
is bounded. Thus, using Equation~\eqref{divgen1} and the bound 
from Theorem~\ref{th:weak} \eqref{estimweak3}, we infer 
from \eqref{debase} that
\begin{equation*}
\begin{split}
\|((Id-P_j) u_j - u_{\parallel,j}^\lambda)(t)\|_{L^2}^2 
& \leq \frac{(C_0~p~e^{KT}~\|v_{init}\|_{L^p})^p}
{(C\ln\lambda)^{p-2}} 
= (C \ln\lambda)^2 \lambda^{2 \ln \left( 2 \frac{C_0}{C}
e^{KT} \sup_{1\le q\le\infty}\|v_{init}\|_{L^q} \right) },
\end{split}
\end{equation*}
choosing  $p=2 \ln \lambda$. With $C$ big enough, we obtain \eqref{eq:approxLinf1} 
($C(T)=2C_0e^{KT+1}\sup_{1\le q\le\infty}\|v_{init}\|_{L^q}$ 
is suitable).

We are now concerned with the first inequality in 
\eqref{eq:approxLinf2}. Coming back to \eqref{selec1}-\eqref{selec2},  
the idea is to use the Strichartz estimate \eqref{StrichartzEsti}. 
However, since we are not able to bound $\partial_x f$ in 
$ L^\infty_{loc} ( (0,\infty), L^2 (\R^3 ))$, we cannot apply  
\eqref{StrichartzEsti} directly. To overcome this difficulty 
we introduce some potential vectors. 
Since $P u_{init}  \in H^1  (\R^3)$ (respectively since 
$f =(f_1 ,f_2 ) \in W^{1,\infty}_{loc}( (0,\infty), L^2 (\R^3 ))$ 
(cf. \eqref{ctoucon} and \eqref{dtfesti}) and $Pf=f$), for $i=1,2$, 
there exists 
$\phi_{init} := (\phi_{init,1},\phi_{init,2}) \in H^2(\R^3)$ 
(resp.  $\psi := (\psi_1 , \psi_2) \in 
W^{1,\infty}_{loc} ( (0,\infty), H^1 (\R^3 ))$) such that 
\begin{equation*} 
\Div(\kappa_{3-i} \phi_{init,i} )=0 , \quad 
\curl( \phi_{init,i}  )= \kappa_{i} u_{init,i} \quad 
\text{ (resp.  } \Div(\kappa_{3-i} \psi_i )=0 , \quad  
\curl( \psi_i  )= \kappa_{i} f_i ).
\end{equation*}
We consider the operator ${\check B}$ defined by 
\begin{eqnarray*} 
{\check B} (\phi_1 ,\phi_2 ) = 
(\kappa_2^{-1}  \curl \phi_2 ,  -\kappa_1^{-1}  \curl \phi_1 ) 
\quad \text{ for }  \phi:=(\phi_1 ,\phi_2 ) \in 
D({\check B} ) :=  D(B) = H_{\curl} \times  H_{\curl} .
\end{eqnarray*}
The operator ${\check B}$ is simply deduced from $B$ by switching 
$\kappa_1$ and $\kappa_2$. It therefore shares the same properties 
and estimates. In addition it satisfies the identity:
\begin{eqnarray}
 \label{tuvacommuter} 
\begin{bmatrix}
  \kappa_1^{-1} \curl & 0
  \\ 0 &  \kappa_2^{-1} \curl 
\end{bmatrix}  
{\check B} = B 
\begin{bmatrix}
  \kappa_1^{-1} \curl & 0
  \\ 0 &  \kappa_2^{-1} \curl 
\end{bmatrix}  .
\end{eqnarray}
Let $\phi := (\phi_1,\phi_2)$ be the solution (for $x \in \R^3$) of 
\begin{eqnarray}
 \label{potentielvecteur} 
( \dt + {\check B} ) \phi = \psi , \quad 
\text{ with  }
\phi|_{t=0} =  \phi_{init} .
\end{eqnarray}
Using the identity \eqref{tuvacommuter} and 
Lemma \ref{keskonepropre} we obtain 
\begin{eqnarray}
 \label{noncarac}
\curl( \phi_{i}  )= \kappa_{i} u_{i} , \quad 
\text{ for } i=1,2.
\end{eqnarray}
Now observe that $ \dt \phi$ verifies 
$( \dt + {\check B} ) \dt \phi = \dt \psi$ ,
 with  $\dt \phi |_{t=0} = \psi |_{t=0}  -  {\check B}  \phi_{init} $.
Applying the Strichartz estimate \eqref{StrichartzEsti}
we obtain 
\begin{eqnarray} 
 \label{StrichartzEstiPV}
\|S^\lambda \dt \phi  \|_{L^2((0,T),L^\infty(\R^3))} \leq C(T)
\sqrt{\ln(1+\lambda T)} \Big( \| \partial_{x} \dt \phi |_{t=0}  \|_{L^2(\R^3)} +
\|  \dt \partial_{x} \psi   \|_{L^1((0,T),L^2(\R^3))} \Big).
\end{eqnarray}
From the definitions of  $\phi$ and $\psi$ and  from the estimates \eqref{ctoucon} and \eqref{dtfesti} of the previous sections  we get :
\begin{eqnarray} 
 \| \partial_{x} \dt \phi |_{t=0}  \|_{L^2(\R^3)} +
 \|  \dt \partial_{x} \psi   \|_{L^1((0,T),L^2(\R^3))} \leq C(T)  \|P  u_{init}  \|_{H^1 (\R^3)} .
 \end{eqnarray}
Using now \eqref{potentielvecteur}, observing that $f_{3-j}$ and therefore $\psi_{3-j}$ vanish because of Assumption~\ref{decouplage}, 
and using \eqref{noncarac}, we obtain the first inequality in 
\eqref{eq:approxLinf2}. 
The second one follows by applying Bernstein lemma.
\end{proof}
%


\section{Generic uniqueness: proof of Theorem~\ref{generic}}
\label{proofgeneric}

We apply the following general result of generic uniqueness for evolution equations by Saint-Raymond.
\begin{Theorem}[Saint-Raymond \cite{LSR}, Theorem $1$]
\label{LSR}
Let $\mathcal{E}_{init}$ be a topological space and $\mathcal{E}$ a metric space.
Let $(S)$ be an evolution equation admitting a solution in $\mathcal{E}$  for any initial data in $\mathcal{E}_{init}$.
Consider the following hypotheses.
\begin{enumerate}[(H1)]
\item \label{H1} For any initial data $U_{init} \in 
\mathcal{E}_{init}$, for any  $(U_{init}^ \eps )_\eps $ 
tending to  $U_{init}$ in $\mathcal{E}_{init}$, for any  
$(U^\eps )_\eps$ in $\mathcal{E}$ respective solutions to $(S)$ 
with $U^\eps_{init}$ as initial data,  
\begin{enumerate}[(i)]
\item \label{sous1} there exists a limit point of $(U^\eps )_\eps$ 
in $\mathcal{E}$;
\item \label{sous2} any limit point of $(U^\eps )_\eps$ in 
$\mathcal{E}$ is solution to $(S)$ with $U_{init}$ as initial data.
\end{enumerate}
\item \label{H2} There exists $\mathcal{D}$, dense subset of 
$\mathcal{E}$, such that for any   $ U_{init} $   in  $\mathcal{D}$, 
there exists only one solution to $(S)$  in $\mathcal{E}$ with   
$ U_{init} $   as initial data.
\end{enumerate}
Under these two hypotheses, there exists a $G_{\delta}$ dense  
$\widetilde{\mathcal{E}_{init}}$ of $\mathcal{E}_{init}$ such that 
for any $U_{init} \in  \widetilde{\mathcal{E}_{init}}$, there exists 
only one solution to $(S)$   in $\mathcal{E}$ with   $ U_{init} $   
as initial data.
 \end{Theorem}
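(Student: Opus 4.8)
The plan is to obtain Theorem~\ref{generic} as a direct application of Saint-Raymond's abstract result, Theorem~\ref{LSR}. Fix $C_{init}>0$ and set
$$\mathcal{E}_{init} := \{ U_{init} \in \Ldiv \mid \| v_{init} \|_{L^\infty(\Omega)} \le C_{init} \},$$
which we endow with the topology ${\bf \tau}_{ss}$ in a first run of the argument and with the topology ${\bf \tau}_{ws}$ in a second run. For $\mathcal{E}$ we take the set of all $U=(u,v)\in C([0,\infty),{\bf L}^2)$ which are global finite energy solutions to \eqref{divgen1}--\eqref{MBgen2} enjoying the properties \eqref{estimweak1}, \eqref{estimweak2}, \eqref{estimweak3} of Theorem~\ref{th:weak} -- these are exactly the ``solutions with the same properties as in Theorem~\ref{th:weak}'' of the statement; note that the proof of Theorem~\ref{unique} uses only these properties, so its conclusions apply to every element of $\mathcal{E}$. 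We metrize $\mathcal{E}$ (on the bounded subsets relevant below, where the weak topology is metrizable) so that convergence means, for every $k\in\N^\star$, strong convergence of $v$ in $C([0,k],L^2(\Omega))$ together with strong convergence of $u$ in $C([0,k],L^2(\R^3))$ (for the ${\bf \tau}_{ss}$ run) or weak-$*$ convergence of $u$ in $L^\infty((0,k),L^2(\R^3))$ (for the ${\bf \tau}_{ws}$ run). Taking $(S)$ to be the system \eqref{divgen1}--\eqref{MBgen2}, Theorem~\ref{th:weak} ensures that $(S)$ admits a solution in $\mathcal{E}$ for every datum in $\mathcal{E}_{init}$; it remains to check the two hypotheses of Theorem~\ref{LSR}.

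\emph{Verification of (H1).} Let $U_{init}^\eps\to U_{init}$ in $\mathcal{E}_{init}$ and let $U^\eps\in\mathcal{E}$ solve $(S)$ with datum $U_{init}^\eps$. In the ${\bf \tau}_{ss}$ case $\|U_{init}^\eps\|_{{\bf L}^2}$ converges, hence is bounded; in the ${\bf \tau}_{ws}$ case $\|u_{init}^\eps\|_{L^2}$ is bounded and $\|v_{init}^\eps\|_{L^\infty}\le C_{init}$. In both cases the bounds \eqref{estimweak1}--\eqref{estimweak3} hold uniformly in $\eps$. For (H1)(i): in the ${\bf \tau}_{ss}$ case the set $\{U_{init}^\eps\}\cup\{U_{init}\}$ is compact in ${\bf L}^2$, so the compactness assertion at the end of Theorem~\ref{th:weak}, combined with a diagonal extraction over $k\in\N^\star$, yields a subsequence converging in $\mathcal{E}$; in the ${\bf \tau}_{ws}$ case, Proposition~\ref{weakprinciple} (applied with $D^\eps:=\int_0^t(\kappa^{-1}\cdot l)F(x,\overline{v^\eps},u^\eps)\,dt'-u^\eps$, bounded in $L^\infty_{loc}((0,\infty),L^2(\R^3))$ by \eqref{estimweak2}) produces a subsequence with $v^\eps\to v$ in $L^\infty_{loc}((0,\infty),L^p(\Omega))$ for all $p\ge2$ and a.e., and $u^\eps\to u$ weak-$*$. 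For (H1)(ii) one passes to the limit in $(S)$: the constraint \eqref{divgen1} and equation \eqref{MBgen2} survive the limit (as in Theorem~\ref{th:weak} and Proposition~\ref{weakprinciple}), and for \eqref{MBgen} one uses that, by \eqref{affine}, $(\kappa^{-1}\cdot l)F(x,\overline{v^\eps},u^\eps)=(\kappa^{-1}\cdot l)F_0(x,\overline{v^\eps})+(\kappa^{-1}\cdot l)F_1(x,\overline{v^\eps})u^\eps$, so that the strong (a.e., $L^\infty$-bounded) convergence of $v^\eps$ against the weak-$*$ convergence of $u^\eps$ identifies the limit of the nonlinear term as $(\kappa^{-1}\cdot l)F(x,\overline{v},u)$. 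Finally \eqref{estimweak1} passes by a.e. convergence, and \eqref{estimweak2}, \eqref{estimweak3} by weak lower semicontinuity. Hence $U\in\mathcal{E}$ solves $(S)$ with datum $U_{init}$, which is (H1).

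\emph{Verification of (H2).} Take $\mathcal{D}:=\{U_{init}\in\mathcal{E}_{init}\mid \curl u_{init,i}\in L^2(\R^3),\ i=1,2\}$. For $U_{init}\in\mathcal{D}$, Theorem~\ref{unique}\eqref{Unique2} applies (with $\mu=1$, since $L^2(\R^3)=H^{\mu-1}(\R^3)$, and under its hypotheses -- in particular the decoupling \eqref{decouplage} -- which are precisely the standing assumptions of Theorem~\ref{generic}), giving a unique solution in $\mathcal{E}$ with datum $U_{init}$. Density of $\mathcal{D}$ in $\mathcal{E}_{init}$ for each of ${\bf \tau}_{ss}$ and ${\bf \tau}_{ws}$ follows from density in the (finer) strong topology of ${\bf L}^2$: given $U_{init}=(u_{init},v_{init})\in\mathcal{E}_{init}$, keep $v_{init}$ and the irrotational part $(Id-P)u_{init}=(Id-P)(\kappa^{-1}\cdot l)\overline{v_{init}}$ fixed and approximate $Pu_{init}$ in $L^2$ by $Pw^k$ with $w^k\in C^\infty_0(\R^3,\R^6)$; using boundedness of $P$ on $L^2$ and, for smooth $\kappa_i$, the fact that $P_i$ maps $H^1$ into $H_{\curl}$, the data $u_{init}^k:=Pw^k+(Id-P)u_{init}$ satisfy \eqref{divgen1}, have $\curl u_{init,i}^k\in L^2(\R^3)$, keep $\|v_{init}\|_{L^\infty}\le C_{init}$, and converge to $U_{init}$ in ${\bf L}^2$. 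This proves (H2).

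\emph{Conclusion.} Theorem~\ref{LSR} then yields, in each run, a $G_\delta$ dense subset of $\mathcal{E}_{init}$ on which $(S)$ has a unique solution (which, lying in $\mathcal{E}$, has the properties of Theorem~\ref{th:weak}); intersecting the two sets obtained produces the $\widetilde{L_{div}}$ of Theorem~\ref{generic}. The step requiring the most care is (H1)(ii) in the ${\bf \tau}_{ws}$ run: one must check that the weak limit is still a genuine solution with \emph{all} the listed properties, which works exactly because $F$ is affine in the electromagnetic field, so the nonlinear term is stable under weak-$*$ convergence of $u$ paired with strong convergence of $v$ -- the same structural feature already underlying Proposition~\ref{weakprinciple}.
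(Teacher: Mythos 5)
The statement you were asked to prove is Saint-Raymond's abstract theorem itself, and your proposal does not prove it: you take Theorem~\ref{LSR} as a black box and use it to derive Theorem~\ref{generic}, i.e.\ you verify hypotheses (H1) and (H2) for the specific Maxwell-type system. Relative to the assigned statement this is circular — the abstract Baire-type argument that constitutes the actual proof is entirely missing. That argument runs as follows. For each $n\in\N^\star$, let $F_n$ be the set of $U_{init}\in\mathcal{E}_{init}$ admitting two solutions $U_1,U_2\in\mathcal{E}$ of $(S)$ with $d(U_1,U_2)\ge 1/n$. If $U_{init}^\eps\to U_{init}$ with $U_{init}^\eps\in F_n$, pick such pairs $(U_1^\eps,U_2^\eps)$; hypothesis (H1)(i) applied successively to the two sequences gives limit points $U_1,U_2$ along a common subsequence, (H1)(ii) says they are solutions with datum $U_{init}$, and continuity of the metric gives $d(U_1,U_2)\ge 1/n$, so each $F_n$ is closed. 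By (H2) the complement of $F_n$ contains the dense set $\mathcal{D}$ (data in $\mathcal{D}$ have a unique solution, hence lie in no $F_n$), so $\mathcal{E}_{init}\setminus F_n$ is open and dense. Then $\widetilde{\mathcal{E}_{init}}:=\bigcap_{n}(\mathcal{E}_{init}\setminus F_n)$ is a $G_\delta$, it contains $\mathcal{D}$ and is therefore dense, and for $U_{init}\in\widetilde{\mathcal{E}_{init}}$ any two solutions satisfy $d(U_1,U_2)<1/n$ for every $n$, hence coincide. None of this appears in your write-up.

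For what it is worth, the paper itself does not prove Theorem~\ref{LSR} either — it is quoted from \cite{LSR} — and what you actually wrote (the choice of $\mathcal{E}_{init}$, $\mathcal{E}$ and $\mathcal{D}$, the verification of (H1) via Theorem~\ref{th:weak} and Proposition~\ref{weakprinciple}, and of (H2) via Theorem~\ref{unique} together with the density of data with $\curl u_{init,i}\in L^2$) reproduces, in somewhat more detail, the paper's Section~\ref{proofgeneric}. So your text is a reasonable account of the proof of Theorem~\ref{generic}, but it is not a proof of the theorem that was assigned.
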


Recall that we denote by $\tau_s$ and  $\tau_w$ respectively 
the strong and weak topologies of $L^2(\R^3,\R^6 )$,  
and by $\tilde{\tau}_s$ the strong topology  of $L^2(\Omega,\R^d)$. 
We consider the product topology ${\bf \tau}_{ss} $ 
(resp. ${\bf \tau}_{ws} $) on  ${\bf L}^2$ 
obtained from $\tau_s$ (resp. $\tau_w$) and $\tilde{\tau}_s$.

For any $C_{init} > 0$, consider 
$$
\mathcal{E}_{init} := \{ U_{init} \in \Ldiv 
\mid \| v_{init} \|_{L^\infty(\Omega)} \leq C_{init} \},
$$
endowed with the  topology ${\bf \tau}_{ss} $ 
(resp. ${\bf \tau}_{ws}$) inherited from ${\bf L^2  }$, and 
$$
\mathcal{E} := 
\{ U \in C([0,\infty),{\bf L}^2), \text{ satisfying \eqref{nunu} 
and the estimates \eqref{estimweak1}, \eqref{estimweak2}, 
\eqref{estimweak3} of Theorem~\ref{th:weak}} \}, 
$$ 
endowed with the strong topology (resp. the weak $*$ topology 
relative to ${\bf \tau}_{ws} $) of $C([0,\infty),  {\bf L}^2 )$. 
Hypothesis (H\ref{H1}) is a direct consequence of the stability 
property stated in Theorem \ref{th:weak} 
(resp. Proposition \ref{weakprinciple}). 
Now, set
$$
\mathcal{D} := \{ U_{init} \in \Ldiv   \text{ with }  
\curl  u_{init,i}  \in L^2 (\R^3), \text{ for } i=1,2 \},
$$
which is dense in  $\mathcal{E}_{init}$ for the topology  
${\bf \tau}_{ss} $  inherited from ${\bf L^2  }$. 
Moreover Theorem~\ref{unique} yields that Hypothesis (H\ref{H2}) 
is satisfied. 
We can therefore apply Theorem \ref{LSR}, what proves  
Theorem~\ref{generic}.


\section{Quasi-stationary limits: proof of Theorem~\ref{th:weakquasi},  
Proposition~\ref{prop:uniqquasi} and Theorem~\ref{th:quasi}}
\label{proofth:quasi}

\begin{proof}[Proof of Theorem~\ref{th:weakquasi}.] 
We first observe that the bounds \eqref{estimweak1}, 
\eqref{estimweak2} given by Theorem~\ref{th:weak} 
for $U^\eta $  are uniform in $\eta \in (0,1)$. 
Therefore, up to a subsequence,  $U^\eta $  converges 
to $U:= (u,v)$ in  
$W^{1,\infty}_{loc}((0,\infty ),L^2(\Omega)) \text{ weak }*$ 
and $v^\eta$ converges to $v$ in  
$L^\infty_{loc} ((0,\infty ),L^\infty (\Omega)) \text{ weak } *$. 
In addition, there holds  $B u^\eta  =  \dt D^\eta $, 
with $D^\eta := - \eta (u^\eta - (\kappa^{-1} \cdot l) 
\overline{v^\eta} )$. 
Passing to the limit already yields that $U$ satisfies  the 
(linear) equations  \eqref{divgen1} and $Bu= 0$. 
Using Proposition \ref{weakprinciple}, we also get that  
$U$ satisfies \eqref{MBgen2}, which means that $v$ solves 
\eqref{eq:limquasiv}. 
\end{proof}

\begin{proof}[Proof of Proposition~\ref{prop:uniqquasi}.] 
The proof is very similar to the uniqueness proof 
in Theorem~\ref{unique} \eqref{Unique2}: it relies on some 
$L^\infty$ approximation of $(Id-P)(\kappa^{-1}\cdot l)v$. 
Consider $v_1, v_2 \in C([0,\infty),L^2(\Omega)) \cap 
L^\infty_{loc}((0,\infty ),L^\infty (\Omega))$, solutions to 
\eqref{eq:limquasiv} with the same initial data $v_{init}$, and 
define $\delta v := v_1-v_2$. Fix $T>0$. From the properties 
\eqref{affine}-\eqref{maj2} of $F$, we get the pointwise estimate 
\begin{equation} \label{eq:estimuniqquasi}
\dt (|\delta v|^2) 
\le C_F \left( (|(Id-P)(\kappa^{-1}\cdot l)v_1| + 1) |\delta v|^2 
+ |(Id-P)(\kappa^{-1}\cdot l)\delta v| \, |\delta v|
\right) 
\quad \text{on} \quad \lbrack0,T\rbrack\times\Omega,
\end{equation}
for some constant 
$C_F=C_F\left( \|v_{init}e^{KT}\|_{L^\infty(\Omega)} \right)$.  
Now, defining for $M>0$
$$
w^M_\parallel := {\bf 1}_{|(Id-P)(\kappa^{-1}\cdot l)v_1|\le M} 
(Id-P)(\kappa^{-1}\cdot l)v_1 ,
$$
we get from Lemma~\ref{lem:approxLinf} that there is $C(T)>0$ 
such that
$$
\forall M\ge1, \quad 
\| w^M_\parallel \|_{L^\infty((0,T)\times\Omega)} \le M , \quad 
\| (Id-P)(\kappa^{-1}\cdot l)v_1 - w^M_\parallel
\|_{L^\infty((0,T),L^2(\Omega))} \le C(T)e^{-M/C(T)} .
$$
Integrating \eqref{eq:estimuniqquasi} over $\Omega$, using the 
Cauchy-Schwarz inequality and increasing the constant $C$ 
(which is still independent of $M$), we obtain
$$
\dt \left( \| \delta v \|_{L^2(\Omega)}^2 \right) \le 
C_F \left( (M+1)\| \delta v \|_{L^2(\Omega)}^2 
+ C(T)e^{-M/C(T)} \right).
$$
Then, Gronwall's lemma yields 
$$
\forall t\in\lbrack0,T\rbrack, \quad 
\| \delta v(t) \|_{L^2(\Omega)}^2 \le 
\frac{C(T)}{M+1} e^{C_F(M+1)T-M/C(T)}.
$$
Now, choose $T$ so small that $C_FMT-M/C(T)<0$ (which is possible, 
since $C$ is a non-decreasing function of $T$), and let $M$ go to 
infinity. This shows that $\delta v$ vanishes on $[0,T]$. Repeating 
the argument on successive time intervals yields $v_1=v_2$.  
\end{proof} 

\begin{proof}[Proof of Theorem~\ref{th:quasi}.] 
For each $\eta\in(0,1)$, consider a solution $U^\eta$ (given by 
Theorem~\ref{th:weak}) to \eqref{divgen1}, \eqref{MBgen2} and 
\eqref{MBgeneta}. Convergence of $v^\eta$ (and $(Id - P)u^\eta$) 
is obtained as in the proof of Theorem~\ref{th:weakquasi} above. 
Now, drop the index $\eta$ for simplicity. 
Then, symmetrizing the system by the change of dependent variables
$$
\widetilde{u_i} = \kappa_i^{1/2} u_i,
$$
we get in the distributional sense:
\begin{equation*}
\text{for } i=1,2, \quad  \eta  \dt  \widetilde{u_i} 
+ (-1)^{3-i}  R_{3-i}  \widetilde{u}_{3-i} 
=  \eta  \kappa_i^{-1/2} \cdot l_i \dt\bar{v} ,
\end{equation*}
and therefore, applying $ \dt$ and combining, 
\begin{equation} \label{eq:ordre2eta}
\text{for } i=1,2, \quad 
\eta^2 \dt^2 \widetilde{u_i} + R_i^\star R_i \widetilde{u_i} =  
(-1)^i \eta R_{3-i} \kappa_{3-i}^{-1/2} l_{3-i} \dt\bar{v} 
+ \eta^2 \kappa_i^{-1/2} l_i \dt^2\bar{v},
\end{equation}
where we have set 
$$
i=1,2, \quad R_i := \kappa_{3-i}^{-1/2} \curl \kappa_i^{-1/2} 
(=R_{3-i}^\star, \text{ for the duality in } L^2(\R^3,dx)). 
$$
System \eqref{eq:ordre2eta} shall be understood as a system of wave 
equations for the ``divergence free'' parts $\pi_i\widetilde{u_i}$, 
when 
$$
\text{for } i=1,2, \quad \pi_i := \kappa_i^{1/2} P_i \kappa_i^{-1/2}. 
$$
Then, $\pi = (\pi_1,\pi_2)$ is an orthogonal projector in the space 
$L^2(\R^3,dx) \times L^2(\R^3,dx)$. Furthermore, 
from the description of $\text{ran }P_i$ and $\text{ran }(1-P_i)$ 
in \eqref{eq:ranP,kerP}, we deduce that 
$$
\text{for }i=1,2, \quad R_i \pi_i = R_i \quad \text{(and} \quad 
\pi_i R_i^\star = R_i^\star \quad \text{by transposition)}. 
$$
Thus, we have finally:
\begin{equation} \label{eq:ondeseta}
\text{for } i=1,2, \quad 
\eta^2 \dt^2 \pi_i\widetilde{u_i} - Q_i \pi_i\widetilde{u_i} =  
(-1)^i \eta R_{3-i} \kappa_{3-i}^{-1/2} l_{3-i} \dt\bar{v} 
+ \eta^2 \pi_i \kappa_i^{-1/2} l_i \dt^2\bar{v},
\end{equation}
with 
$$
\text{for }i=1,2, \quad Q_i := - R_i^\star R_i 
+ \kappa_i^{1/2} \nabla \left( \kappa_i^{-2} \kappa_{3-i}^{-1} 
\Div (\kappa_i^{1/2}\cdot) \right). 
$$
From \cite[Lemma 3.10]{starynkevitchPhD}, we know that for $i=1,2$, 
the differential second-order operator $(-Q_i)$ is a self-adjoint, 
positive, and elliptic. Thus, with 
$v \in L^\infty_{loc}((0,\infty),L^\infty(\Omega)) \cap 
W^{1,\infty}_{loc}((0,\infty),L^2(\Omega))$ given, and for given 
initial data,  
\begin{equation*}
\pi_i \widetilde{u_i}_{|_{t=0}} = \pi_i \kappa_i^{1/2} u_{init,i} 
\text{ and } \eta ( \dt \pi_i \widetilde{u_i})_{|_{t=0}} 
= \eta \pi _i \kappa_i^{-1/2} l_i F(\overline{v_{init}},u_{init}) 
+ (-1)^i R_{3-i} \kappa_{3-i}^{1/2} u_{init,3-i} ,
\end{equation*}
the solution $(\pi_1\widetilde{u_1}, \pi_2\widetilde{u_2})$ 
to the linear wave equation system \eqref{eq:ondeseta} is 
uniquely determined. We recover it \emph{via} vector potentials: 
defining 
\begin{equation}
 \label{dumaspabo} 
 \left\{
\begin{split}
\text{for } i=1,2, \quad \eta^2 \dt^2 \phi_i - Q_i \phi_i 
& = \eta \pi_i \kappa_i^{-1/2} l_i \dt \bar{v} , \\
\phi_{i |_{t=0}} & = 0 , \\ 
\eta \dt \phi_{i |_{t=0}} & = \pi_i \widetilde{u_i}_{|_{t=0}},
\end{split}
\right.
\end{equation}
we have
\begin{equation} \label{eq:uitildepotentiel}
\text{for } i=1,2, \quad \pi_i \widetilde{u_i} 
= \eta \dt \phi_i + (-1)^i R_{3-i} \phi_{3-i}. 
\end{equation}
The problem  \eqref{dumaspabo} also determines uniquely the 
vector potentials $\phi_i$.
Since $\pi_i \phi_i $ also satisfies the problem  \eqref{dumaspabo} we infer that $\pi_i \phi_i =\phi_i$.
Furthermore, \cite[Lemma 3.10]{starynkevitchPhD} ensures that 
for $i=1,2$, $Q_i$ does not admit 0 as a resonance. One then 
needs to assume the following:
\begin{equation} \label{hyp:nontrapping}
\text{for } i=1,2, \, Q_i \text{ is non-trapping}.
\end{equation}
This is enough to apply
\begin{Theorem}[Starynkevitch \cite{starynkevitchPhD}, Theorem 3.2]
Let $Q$ be a non-trapping, ($L^2$-)self-adjoint, negative, 
and elliptic differential second-order operator, for which 0 is 
not a resonance. Let $s>1/2$, $\gamma \in (-3/2,1/2)$ and $R>0$. 
Then, there exists $C\ge0$ such that: for all 
$(u_0,u_1) \in \dot H_Q^{\gamma+1}(\R^3) \times \dot H_Q^\gamma(\R^3)$, 
and $f$ such that 
$\langle x \rangle^s (-Q)^{\gamma/2} f \in L^2((0,\infty)\times\R^3)$, 
the solution $u$ to 
$$
\dt^2 u - Q u = f \text{ on } (0,\infty)\times\R^3, \quad 
\text{ with } u_{|_{t=0}} = u_0 , \quad \dt u_{|_{t=0}} = u_1 , 
$$
satisfies 
$$
\| (u,\dt u) \|_{L^2((0,\infty), \dot H_Q^{\gamma+1}(B_R) \times 
\dot H_Q^\gamma(B_R))} \le 
C \left( \| u_0 \|_{\dot H_Q^{\gamma+1}(\R^3)} + 
\| u_1 \|_{\dot H_Q^\gamma(\R^3)} + 
\| \langle x \rangle^s (-Q)^{\gamma/2} f \|_{L^2((0,\infty)\times\R^3)}
\right) .
$$
\end{Theorem}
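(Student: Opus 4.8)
Write $\mathcal{A}:=-Q$, a self-adjoint, positive, elliptic second-order operator on $L^{2}(\R^{3},dx)$ with no eigenvalue and no resonance at $0$, and recall that $\dot H_{Q}^{\sigma}$ denotes the homogeneous space associated with $\mathcal{A}^{\sigma/2}$. By linearity and Duhamel's formula it suffices to treat the homogeneous problem $(f=0)$ and the forced problem $(u_{0}=u_{1}=0)$ separately. Using the spectral representation $u(t)=\cos(t\sqrt{\mathcal{A}})u_{0}+\mathcal{A}^{-1/2}\sin(t\sqrt{\mathcal{A}})u_{1}$ in the homogeneous case, one sees that $\mathcal{A}^{(\gamma+1)/2}u$ and $\mathcal{A}^{\gamma/2}\partial_{t}u$ are, up to signs, of the form $\cos(t\sqrt{\mathcal{A}})\,w$ or $\sin(t\sqrt{\mathcal{A}})\,w$ with $w$ either $\mathcal{A}^{(\gamma+1)/2}u_{0}\in L^{2}$ or $\mathcal{A}^{\gamma/2}u_{1}\in L^{2}$ (using $\mathcal{A}^{(\gamma+1)/2}\mathcal{A}^{-1/2}=\mathcal{A}^{\gamma/2}$). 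Since $\langle x\rangle^{-s}$ is bounded below on $B_{R}$, the claimed inequality reduces to the Kato smoothing bound
\[
\big\|\,\langle x\rangle^{-s}e^{\pm it\sqrt{\mathcal{A}}}w\,\big\|_{L^{2}((0,\infty)\times\R^{3})}\le C\,\|w\|_{L^{2}(\R^{3})},\qquad w\in L^{2}(\R^{3}),
\]
while the forced problem reduces, after writing $u(t)=\int_{0}^{t}\mathcal{A}^{-1/2}\sin((t-r)\sqrt{\mathcal{A}})f(r)\,dr$ and substituting $h:=\langle x\rangle^{s}\mathcal{A}^{\gamma/2}f$, to the inhomogeneous ($TT^{\star}$) form of the same smoothing estimate. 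By Kato's theory of smooth operators, both are equivalent to a single uniform weighted resolvent estimate: there is $C$ such that
\[
\sup_{\lambda\ge0}\big\|\,\langle x\rangle^{-s}(\mathcal{A}-\lambda\mp i0)^{-1}\langle x\rangle^{-s}\,\big\|_{L^{2}(\R^{3})\to L^{2}(\R^{3})}\le C ,
\]
the boundary values existing as bounded operators $\langle x\rangle^{-s}L^{2}\to\langle x\rangle^{s}L^{2}$ by the limiting absorption principle (valid on $(0,\infty)$ because $\mathcal{A}$ is elliptic, has no positive eigenvalue, and is non-trapping).

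The resolvent estimate is obtained by a low/high frequency decomposition. For $\lambda\ge\lambda_{0}\gg1$, the quantitatively stronger bound $\|\langle x\rangle^{-s}(\mathcal{A}-\lambda\mp i0)^{-1}\langle x\rangle^{-s}\|\lesssim\lambda^{-1/2}$ is the semiclassical non-trapping resolvent estimate: under the \emph{non-trapping} hypothesis it follows from the classical Morawetz positive-commutator argument, or equivalently from Melrose--Sj\"ostrand propagation of singularities (in the form of Vainberg, Burq, or Cardoso--Vodev), ellipticity being used to control the principal symbol and to localize away from the compact support of $\mathcal{A}-(\text{const.})\Delta$. For $0\le\lambda\le\lambda_{0}$, including the threshold $\lambda=0$, one uses that $\mathcal{A}$ is a compactly supported elliptic perturbation of a constant-coefficient operator on $\R^{3}$ and that $0$ is neither an eigenvalue nor a resonance: a Fredholm/Jensen--Kato analysis of $(\mathcal{A}-\lambda)^{-1}$ near $\lambda=0$ shows that the weighted resolvent extends continuously, hence remains bounded, down to $\lambda=0$. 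This step is the origin of the restrictions $s>1/2$ and $\gamma\in(-3/2,1/2)$: these are exactly the ranges in which the fractional powers $\mathcal{A}^{\gamma/2}$ and $\mathcal{A}^{(\gamma+1)/2}$ act boundedly between the weighted $L^{2}$ spaces attached to the zero-energy expansion, by Hardy-type inequalities in $\R^{3}$ (equivalently, the weighted mapping properties of the Riesz potentials of orders $-\gamma$ and $-(\gamma+1)$).

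Granting the uniform resolvent bound, the space-time estimates follow by a Fourier transform in time. Extending $u$ by $0$ to $t<0$, the equation becomes, in $\mathcal{D}'(\R_{t})$,
\[
(\partial_{t}^{2}+\mathcal{A})(u\mathbf{1}_{t>0})=f\mathbf{1}_{t>0}+u_{1}\otimes\delta_{0}+u_{0}\otimes\delta_{0}' ,
\]
so that, after multiplication by $e^{-\varepsilon t}$ and Fourier transform in $t$, $\widehat{u}(\tau)=(\mathcal{A}-(\tau+i\varepsilon)^{2})^{-1}\big(\widehat{f}(\tau)+u_{1}+i\tau u_{0}\big)$. Letting $\varepsilon\downarrow0$ and applying Plancherel's theorem in $t$ writes the left-hand side of the desired inequality as an $L^{2}_{\tau}$-integral of weighted $L^{2}_{x}$ norms of $\widehat{u}(\tau)$ and $\tau\widehat{u}(\tau)$. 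For the forced contribution one uses the resolvent estimate and duality pointwise in $\tau$ and then integrates; for the initial-data contribution the change of variables $\lambda=\tau^{2}$ produces the Jacobian $d\lambda=2|\tau|\,d\tau$ and the extra decay $(1+\lambda)^{-1/2}$ from the high-frequency resolvent bound, and these make the $\tau$-integral converge and reproduce exactly the homogeneous $\dot H_{Q}^{\gamma+1}\times\dot H_{Q}^{\gamma}$ norms of $(u_{0},u_{1})$. The substantive point throughout is the \textbf{high-frequency non-trapping resolvent estimate}; the low-frequency/threshold analysis, though more elementary, is what fixes the admissible ranges of $s$ and $\gamma$.
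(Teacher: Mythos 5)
The paper does not prove this statement at all: it is quoted verbatim as Theorem 3.2 of Starynkevitch's PhD thesis \cite{starynkevitchPhD} and used as a black box in the proof of Theorem~\ref{th:quasi}, so there is no in-paper argument to compare yours against. That said, your sketch follows exactly the route the authors attribute to the cited source (``dispersive estimates obtained from resolvent estimates on elliptic operators''): reduction to Kato smoothing for the half-wave group, limiting absorption with the high-frequency non-trapping gain $\lambda^{-1/2}$, a Jensen--Kato threshold analysis at $\lambda=0$ using the no-resonance hypothesis, and Plancherel in time applied to the causal extension. The architecture is sound, and in particular you correctly handle the point that the retarded Duhamel term must be treated through the $\varepsilon$-regularized Fourier transform in time rather than by a bare $TT^{\star}$ argument. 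One imprecision worth fixing: the smoothing bound for $e^{\pm it\sqrt{\mathcal{A}}}$ is \emph{not} equivalent to the mere uniform bound $\sup_{\lambda\ge0}\|\langle x\rangle^{-s}(\mathcal{A}-\lambda\mp i0)^{-1}\langle x\rangle^{-s}\|\le C$ on the resolvent of $\mathcal{A}$; by Kato's theory it corresponds to a uniform bound on the resolvent of $\sqrt{\mathcal{A}}$, which after the substitution $\lambda=\mu^{2}$ requires precisely the $O(\lambda^{-1/2})$ high-frequency decay you only invoke later. Since you do state and justify that stronger high-frequency estimate, the gap is presentational rather than substantive, but the middle paragraph should be rephrased so that the ``equivalence'' claim matches the estimate actually used. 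Also note that $s>1/2$ is already needed for the high-frequency limiting absorption principle (Agmon), not only for the threshold analysis as your last paragraph suggests.
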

For all $\mu\in\R$, the space $\dot H_Q^\mu(\R^3)$ is defined by 
the norm 
$$
\| v \|_{\dot H_Q^\mu(\R^3)} = \| (-Q)^{\mu/2} v \|_{L^2(\R^3)} .
$$
We apply the result above to $\phi_i(\eta t,x)$, 
whith $\gamma=0$ and $s=1$. This leads to:  
\begin{equation*}
\begin{split}
\forall R>0, \, \exists C_R>0, \quad 
\| (\phi_i,\eta\dt\phi_i) \|_{L^2((0,\infty),
\dot{H}^1(B_R)\times L^2(B_R))} 
\le C_R 
& \big( \eta^{1/2}  \| \pi_i \kappa_i^{1/2} u_{init,i} \|_{L^2(\R^3)} \\
& + \eta \| \langle x \rangle \pi_i \kappa_i^{-1/2} l_i \dt\bar{v} 
\|_{L^2((0,\infty),L^2(\R^3))} \big).
\end{split}
\end{equation*}
The right-hand side is controlled thanks to 
\begin{Lemma}[Starynkevitch \cite{starynkevitchPhD}, Lemma 3.11]
For all $R>0$, there exists $C_R>0$ such that,  
if $m \in L^2(\R^3)$ and ${\rm supp} (m) \subset B_R$, then 
$$
\text{for } \, i=1,2,
\quad |\pi_i m (x)| \le C_R \langle x \rangle^{-3} \| m \|_{L^2(\R^3)} 
\quad \text{for } \, a.e. \, x\in\R^3.
$$
\end{Lemma}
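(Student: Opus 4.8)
The plan is to reduce the statement to a far-field decay estimate for a Newtonian-type potential. Recall that $\pi_i = \kappa_i^{1/2} P_i \kappa_i^{-1/2}$ and that, by the description of $\operatorname{ran} P_i$ and $\operatorname{ran}(Id-P_i)$ in \eqref{eq:ranP,kerP}, for any $w \in L^2(\R^3,\R^3)$ one has $P_i w = w - \nabla\phi$, where $\phi \in \dot{H}^1(\R^3)$ is the unique solution vanishing at infinity of $\Div(\kappa_i \nabla\phi) = \Div(\kappa_i w)$. Applying this with $w = \kappa_i^{-1/2} m$ gives $\pi_i m = m - \kappa_i^{1/2}\nabla\phi$ with $\Div(\kappa_i\nabla\phi) = \Div G$, where $G := \kappa_i^{1/2} m$ is supported in $B_R$. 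Fix $R_0 \ge R$ with $\mathcal{K} = \overline{\Omega} \subset B_{R_0}$. For $|x| > R_0$ we have $\kappa_i(x) = 1$ and $m(x) = 0$, so $\pi_i m(x) = -\nabla\phi(x)$, and it suffices to prove $|\nabla\phi(x)| \le C_{R_0}\langle x\rangle^{-3}\|m\|_{L^2}$ for $|x| \ge 2R_0$. For $|x| < 2R_0$ the weight $\langle x\rangle^{-3}$ is bounded below, and the pointwise inequality there is, in the form actually used in Theorem~\ref{th:quasi} (where only $\|\langle x\rangle\,\pi_i m\|_{L^2(\R^3)}$ enters), a consequence of the $L^2$-boundedness of $\pi_i$.

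For the far-field decay, introduce the fundamental solution $\Gamma_i(x,y)$ of the divergence-form operator $-\Div(\kappa_i\nabla\cdot)$ on $\R^3$, i.e.\ $-\Div_x(\kappa_i(x)\nabla_x\Gamma_i(x,y)) = \delta_y$ with $\Gamma_i(\cdot,y) \in \dot{H}^1_{\mathrm{loc}}$ vanishing at infinity. Its existence and the bound $0 < \Gamma_i(x,y) \le C|x-y|^{-1}$ are classical for uniformly elliptic divergence-form operators; since $\kappa_i$ is smooth, $\Gamma_i$ is $\mathcal{C}^\infty$ off the diagonal, and it is symmetric. Standard integration by parts (legitimate since $G$ is compactly supported, $\phi \in \dot H^1\cap L^6$, and $\Gamma_i$ decays) gives $\phi(x) = \int_{B_R} \nabla_y\Gamma_i(x,y)\cdot G(y)\,dy$, hence, differentiating under the integral for $|x| \ge 2R_0$ (there the integrand is $\mathcal C^\infty$ in $x$, uniformly in $y \in B_R$, since $|x-y|\ge R_0$), $\nabla_x\phi(x) = \int_{B_R}\nabla_x\nabla_y\Gamma_i(x,y)\cdot G(y)\,dy$, so that by Cauchy--Schwarz $|\nabla_x\phi(x)| \le |B_R|^{1/2}\|\kappa_i\|_{L^\infty}^{1/2}\|m\|_{L^2}\,\sup_{y\in B_{R_0}}|\nabla_x\nabla_y\Gamma_i(x,y)|$.

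The crucial point is then $|\nabla_x\nabla_y\Gamma_i(x,y)| \le C_{R_0}\langle x\rangle^{-3}$ for $|x| \ge 2R_0$, $y \in B_{R_0}$. Fix $y \in B_{R_0}$: on $\{|x| > R_0\}$ the function $x \mapsto \Gamma_i(x,y)$ is harmonic (there $\kappa_i\equiv 1$ and $\Div(\kappa_i\nabla\Gamma_i)=0$) and vanishes at infinity, hence admits a convergent multipole expansion $\Gamma_i(x,y) = a_0(y)/|x| + O(|x|^{-2})$. The monopole coefficient is pinned down by the flux identity $\int_{|z|=\rho}\partial_\nu\Gamma_i(\cdot,y)\,dS = \int_{|z|=\rho}\kappa_i\,\partial_\nu\Gamma_i\,dS = \int_{|z|<\rho}\Div(\kappa_i\nabla\Gamma_i)\,dz = -1$ for $\rho > R_0$, which forces $a_0(y)$ to be a fixed nonzero constant, in fact $1/(4\pi)$, \emph{independent of $y$}. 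Consequently the expansion of $\nabla_y\Gamma_i(x,y)$ carries no monopole and starts at order $|x|^{-2}$, and that of $\nabla_x\nabla_y\Gamma_i(x,y)$ at order $|x|^{-3}$; making these expansions uniform for $y \in B_{R_0}$ (their coefficients are smooth functions of $y$, expressible as spherical-harmonic moments of $\Gamma_i(z,y)$ over $\{|z| = 2R_0\}$, with bounds from the smoothness of $\Gamma_i$ on $\{|z-y|\ge R_0\}$) yields the claimed bound. Combined with the previous paragraph this gives $|\pi_i m(x)| = |\nabla\phi(x)| \le C_{R_0}\langle x\rangle^{-3}\|m\|_{L^2}$ for $|x|\ge 2R_0$.

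The main obstacle is exactly this last estimate: upgrading the qualitative properties of $\Gamma_i$ (existence, off-diagonal smoothness, decay) into the quantitative, $y$-uniform bound $|\nabla_x\nabla_y\Gamma_i(x,y)| \lesssim |x-y|^{-3}$. What makes it work is that the monopole coefficient of $\Gamma_i(\cdot,y)$ is $y$-independent (conservation of flux), so differentiating in $y$ gains one power of $|x|$ beyond what one sees from $\Gamma_0 = (4\pi|x-y|)^{-1}$ — precisely the extra decay needed for the weight $\langle x\rangle$ to be square-integrable against $\pi_i m$ in Theorem~\ref{th:quasi}. An alternative to the Green's-function route, which renders both the $y$-independence of the leading term and the gain under $y$-differentiation transparent, is perturbative: write $-\Div(\kappa_i\nabla\cdot) = -\Delta - \Div((\kappa_i-1)\nabla\cdot)$ with $\kappa_i-1$ smooth and supported in $\mathcal{K}$, and obtain $\Gamma_i$ from $\Gamma_0$ through a convergent Neumann series whose successive corrections are smoother and inherit — with the same monopole — the far-field behaviour of $\Gamma_0$.
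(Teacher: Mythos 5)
The paper cites this as Lemma~3.11 of Starynkevitch's thesis without reproducing its proof, so there is no in-text argument to compare yours against. Assessed on its own merits, your proof is correct in its essentials. The reduction $\pi_i m = m - \kappa_i^{1/2}\nabla\phi$ with $\Div(\kappa_i\nabla\phi)=\Div(\kappa_i^{1/2}m)$, and the observation that $\pi_i m = -\nabla\phi$ outside $B_{R_0}\supset\mathcal{K}\cup B_R$, is exactly right. The flux identity pinning the monopole coefficient of $\Gamma_i(\cdot,y)$ to $1/(4\pi)$ \emph{independently of $y$}, so that $\nabla_y$ gains a full power of $|x|$, is the heart of the matter and is correctly carried out. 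The uniformity of the multipole tails in $y\in B_{R_0}$ is asserted rather quickly, but the sketch (extract moments on a sphere on which $\Gamma_i$ is smooth, at positive distance from $y$) can be made rigorous.

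Two caveats. First, as you implicitly acknowledge, the lemma as quoted cannot hold pointwise on $B_{2R_0}$ for arbitrary $m\in L^2$: take $m=\kappa_i^{-1/2}\curl A$ with $A\in H^1\setminus W^{1,\infty}$ compactly supported in $B_R$; then $\Div(\kappa_i^{1/2}m)=0$, so $\pi_i m=m\notin L^\infty$. Your reformulation (far-field pointwise bound, plus $L^2$-boundedness of $\pi_i$ near the support, which is all that Theorem~\ref{th:quasi} uses) is the correct usable statement, and you are right to flag it — but be careful with the wording: what you obtain near $B_{2R_0}$ is the weighted $L^2$ bound, not the pointwise one. Second, the Green's-function/multipole machinery is heavier than necessary, and your closing perturbative remark as stated is flawed: a Neumann series for $\Gamma_i$ around $\Gamma_0$ requires a smallness hypothesis on $\kappa_i-1$ that is not available. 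The one-step version does work and is cleaner: write $\Delta\phi=\Div H$ with $H:=\kappa_i^{1/2}m-(\kappa_i-1)\nabla\phi$ supported in $B_{R_0}$; since $\nabla\phi=(Id-P_i)(\kappa_i^{-1/2}m)$ and $P_i$ is an orthogonal projector, $\|\nabla\phi\|_{L^2}\lesssim\|m\|_{L^2}$, hence $\|H\|_{L^1(B_{R_0})}\lesssim\|m\|_{L^2}$. The explicit Newtonian potential then gives $|\nabla\phi(x)|\lesssim |x|^{-3}\|H\|_{L^1}$ for $|x|\ge 2R_0$ directly, with no appeal to the Green's function of the variable-coefficient operator, no multipole expansion and no $y$-uniformity issue.
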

Since $\pi_i\phi_i=\phi_i$, by the usual $TT^\star$ argument, 
$\| R_i\phi_i \|_{L^2(B_R)} \le \| \phi_i \|_{\dot{H}^1(B_R)}$, 
and we deduce from \eqref{eq:uitildepotentiel}: 
$$
\text{for } \, i=1,2, \quad
\pi_i \widetilde{u_i} = O(\sqrt{\eta}) \quad 
\text{ in } L^2((0,\infty),L^2_{loc}(\R^3)),
$$
which yields the convergence of $P u^\eta$ to zero in $L^2((0,\infty),L^2_{loc}(\R^3))$.
\end{proof}


\begin{thebibliography}{10}

\bibitem{AB09}
F. Alouges and K. Beauchard.
\newblock Magnetization switching on small ferromagnetic ellipsoidal 
samples. 
\newblock {\em ESAIM: COCV}, 15:676–-711, 2009.

\bibitem{AS92}
F. Alouges and A. Soyeur. 
\newblock On global weak solutions for Landau Lifschitz equations: 
existence and non uniqueness. 
\newblock {\em Nonlinear Anal., Theory Methods Appl.}, 18(11):
1071--1084,1992.

\bibitem{bcd}
H.~Bahouri, J.-Y. Chemin, and R.~Danchin.
\newblock {\em Fourier Analysis and nonlinear partial differential equations}.
\newblock {\em Springer.}

\bibitem{BergLof}
J.~Bergh and J.~L{\"o}fstr{\"o}m.
\newblock {\em Interpolation spaces. {A}n introduction}.
\newblock Springer-Verlag, Berlin, 1976.
\newblock Grundlehren der Mathematischen Wissenschaften, No. 223.

\bibitem{BBR01}
B.~Bid\'egaray-Fesquet, A.~Bourgeade and D.~Reignier.
\newblock Introducing physical relaxation terms in Bloch 
equations.
\newblock {\em J. Comput. Phys.} 170 (2001), 603--613.

\bibitem{BB06}
B.~Bid\'egaray-Fesquet.
\newblock {\em De Maxwell-Bloch \`a Schr\"odinger non 
lin\'eaire : une hi\'erarchie de mod\`eles en optique 
quantique.}
\newblock Collection Math\'ematiques et Applications,
vol. 49, Springer,2006.

\bibitem{Bohm79}
A.~Bohm. 
\newblock {\em Quantum Mechanics.} 
\newblock Texts and monographs in Physics, 
Springer-Verlag, 1979.

\bibitem{Boyd92}
R.W.~Boyd. 
\newblock {\em Nonlinear Optics.} 
\newblock Academic Press, 1992.

\bibitem{brenner}
P.~Brenner.
\newblock On the existence of global smooth solutions of certain 
semilinear hyperbolic equations.
\newblock {\em Math. Z.}, 167(2):99--135, 1979.

\bibitem{Brown63}
W.F. Brown. 
\newblock {\em Micromagnetics}. 
\newblock Interscience Publisher, John Willey and Sons, New York
1963.

\bibitem{CZ52}
A. P. Calder{\'o}n and A. Zygmund.
\newblock On the existence of certain singular integrals.
\newblock {\em Acta Math.}, 88: 85--139, 1952.

\bibitem{CF98}
G. Carbou and P. Fabrie.
\newblock Time average in Micromagnetism. 
\newblock {\em Journal of Differential Equations}, 147:383-409, 1998.

\bibitem{CF01a}
G. Carbou and P. Fabrie. 
\newblock Regular solutions for Landau-Lifschitz equation in $\R^3$. 
\newblock {\em Communications in Applied Analysis}, 5 (1):17--30, 
2001.

\bibitem{CF01b}
G. Carbou and P. Fabrie. 
\newblock Regular Solutions for Landau-Lifschitz Equation in a 
Bounded Domain.
\newblock {\em Diff. Integral Eqns}, 14:219–229, 2001.

\bibitem{chazarain}
J.~Chazarain.
\newblock Op\'erateurs hyperboliques a caract\'eristiques 
de multiplicit\'e constante.
\newblock {\em Ann. Inst. Fourier (Grenoble)}, 24(1):173--202, 1974.

\bibitem{CTDRG88}
C.~Cohen-Tannoudji, J.~Dupont-Roc and G.~Grynberg.
\newblock {\em Processus d'interaction entre photons 
et atomes.} 
\newblock Savoirs actuels, Intereditions/Editions du 
CNRS, 1988.

\bibitem{fio2}
J.~J. Duistermaat and L.~H{\"o}rmander.
\newblock Fourier integral operators. {II}.
\newblock {\em Acta Math.}, 128(3-4):183--269, 1972.

\bibitem{dumas}
E.~Dumas.
\newblock Global existence for {M}axwell-{B}loch systems.
\newblock {\em J. Differential Equations}, 219(2):484--509, 2005.

\bibitem{Garding}
L.~G{\aa}rding.
\newblock Hyperbolic equations in the twentieth century.
\newblock In {\em Mat\'eriaux pour l'histoire des math\'ematiques 
au {XX{$\sp {\rm e}$}} si\`ecle ({N}ice, 1996)}, volume~3 of 
{\em S\'emin. Congr.}, pages 37--68. Soc. Math. France, Paris, 1998.


\bibitem{Had00}
H.~Haddar.
\newblock {\em Mod\`eles asymptotiques en ferromagn\'etisme : couches
  minces et homog\'en\'eisation}. 
\newblock Th\`ese INRIA-\'Ecole Nationale des Ponts et Chauss\'ees, 2000.



\bibitem{fio1}
L.~H{\"o}rmander.
\newblock Fourier integral operators. {I}.
\newblock {\em Acta Math.}, 127(1-2):79--183, 1971.

\bibitem{jochmann}
F.~Jochmann.
\newblock Existence of solutions and a quasi-stationary limit for 
a hyperbolic system describing ferromagnetism.
\newblock {\em SIAM J. Math. Anal.}, 34(2):315--340 (electronic), 2002.

\bibitem{jmr}
J.~L. Joly, G.~M{\'e}tivier, and J.~Rauch.
\newblock Global solutions to {M}axwell equations in a ferromagnetic medium.
\newblock {\em Ann. Henri Poincar\'e}, 1(2):307--340, 2000.

\bibitem{km}
S.~Klainerman and M.~Machedon.
\newblock Space-time estimates for null forms and the local existence theorem.
\newblock {\em Comm. Pure Appl. Math.}, 46(9):1221--1268, 1993.

\bibitem{kumano}
H.~Kumano-go.
\newblock A calculus of {F}ourier integral operators on {$R\sp{n}$} 
and the fundamental solution for an operator of hyperbolic type.
\newblock {\em Comm. Partial Differential Equations}, 1(1):1--44, 1976.

\bibitem{LL69}
L. Landau and E. Lifschitz.
\newblock {\em \'Electrodynamique des milieux continus. 
Cours de physique th\'eorique, t. 8.}
\newblock Mir, Moscou, 1969.

\bibitem{lax}
P.D. Lax.
\newblock Asymptotic solutions of oscillatory initial value problems.
\newblock {\em Duke Math. J.}, 24:627--646, 1957.

\bibitem{lindblad}
H.~Lindblad.
\newblock Counterexamples to local existence for semi-linear wave equations.
\newblock {\em Amer. J. Math.}, 118(1):1--16, 1996.

\bibitem{littman}
W.~Littman.
\newblock Fourier transforms of surface-carried measures and differentiability
  of surface averages.
\newblock {\em Bull. Amer. Math. Soc.}, 69:766--770, 1963.

\bibitem{Loudon91} 
R.~Loudon. 
\newblock {\em The quantum theory of light.}
\newblock Clarendon Press, Oxford, 1991.

\bibitem{ludwig}
D.~Ludwig.
\newblock Exact and asymptotic solutions of the {C}auchy problem.
\newblock {\em Comm. Pure Appl. Math.}, 13:473--508, 1960.

\bibitem{metivier}
G.~M{\'e}tivier.
\newblock The {C}auchy problem for semilinear hyperbolic systems with
  discontinuous data.
\newblock {\em Duke Math. J.}, 53(4):983--1011, 1986.

\bibitem{NM92}
A.C.~Newell and J.V.~Moloney.
\newblock {\em Nonlinear optics.}
\newblock Addison-Wesley Publishing Company Advanced 
Book Program, Redwood City, CA, 1992.

\bibitem{nt2}
L.~Nirenberg and F.~Tr{\`e}ves.
\newblock On local solvability of linear partial differential 
equations. {II}. {S}ufficient conditions.
\newblock {\em Comm. Pure Appl. Math.}, 23:459--509, 1970.

\bibitem{nt1}
L.~Nirenberg and F.~Treves.
\newblock A correction to: ``{O}n local solvability of linear 
partial differential equations. {II}. {S}ufficient conditions'' 
({C}omm. {P}ure {A}ppl. {M}ath. {\bf 23} (1970), 459--509).
\newblock {\em Comm. Pure Appl. Math.}, 24(2):279--288, 1971.

\bibitem{PP69}
R.~Pantell and H.~Puthoff.
\newblock {\em Fundamentals of quantum electronics.} 
\newblock Wiley and Sons Inc., N.Y., 1969.

\bibitem{RR}
J.~Rauch and M.~C. Reed.
\newblock Discontinuous progressing waves for semilinear systems.
\newblock {\em Comm. Partial Differential Equations}, 10(9):1033--1075, 1985.

\bibitem{LSR}
L.~Saint-Raymond.
\newblock Un r\'esultat g\'en\'erique d'unicit\'e pour les \'equations
  d'\'evolution.
\newblock {\em Bull. Soc. Math. France}, 130(1):87--99, 2002.

\bibitem{SSL77}
M.~Sargent, M.O.~Scully and W.E.~Lamb.
\newblock {\em Laser Physics.} 
\newblock Addison-Wesley, 1977.

\bibitem{starynkevitch}
J.~Starynk{\'e}vitch.
\newblock Local Energy Estimates for Maxwell-Landau-Lifshitz System 
and Applications.
\newblock {\em Journal of Hyperbolic Differential Equations}, 
2(3):565--594, 2005.

\bibitem{starynkevitchPhD}
J.~Starynk{\'e}vitch.
\newblock Probl\`emes d'asymptotique en temps en ferromagn\'etisme, 
{PhD} {T}hesis.

\bibitem{sueur}
F.~Sueur.
\newblock Approche visqueuse de solutions discontinues de syst\`emes
  hyperboliques semilin\'eaires.
\newblock {\em Ann. Inst. Fourier (Grenoble)}, 56(1):183--245, 2006.

\bibitem{Visintin85}
A.~Visintin.
\newblock On Landau-Lifshitz equations for ferromagnetism.  
\newblock {\em Japan J. Appl. Math.}, 2(1):69--84, 1985.

\bibitem{yudo}
V.~I. Judovi{\v{c}}.
\newblock Some bounds for solutions of elliptic equations.
\newblock {\em Mat. Sb. (N.S.)}, 59 (101)(suppl.):229--244, 1962.

\end{thebibliography}

\end{document}